\documentclass [11pt,a4paper]{article}

\usepackage{amsfonts,amssymb,amsmath,latexsym,makeidx,theorem,color,hyperref,enumitem}
\usepackage[utf8]{inputenc}
\usepackage{enumitem}
\usepackage{anyfontsize}

\DeclareFontShape{OT1}{cmr}{sb}{n}{ <-> ssub * cmr/m/n }{}

\hypersetup{
    colorlinks,
    citecolor=red,
    linkcolor=blue,
    urlcolor=black
}

\newcommand{\gi}{g_i}
\newcommand{\f}{{f_\lambda}}
\newcommand{\F}{\mathfrak{F}}

\newcommand{\de}{\delta}
\newcommand{\R}{\mathbb{R}}
\newcommand{\eps}{\varepsilon}
\newcommand{\ph}{\varphi}
\newcommand{\ra}{\rightarrow}

\newcommand{\fl}{(-\Delta)^\frac{1}{2}}
\newcommand{\N}{\mathbb N}

\newcommand{\w}{\omega_{{\boldsymbol  a}, \boldsymbol \updelta,\boldsymbol\upxi}}
\newcommand{\dis}{\displaystyle}
\newcommand{\Vxi}{\boldsymbol \upxi}
\newcommand{\Vdelta}{\boldsymbol\updelta}
\newcommand{\Va}{\boldsymbol a}
\newcommand{\Vde}{\boldsymbol\updelta}
\newcommand{\wn}{\omega_n}
\newcommand{\pl}{\ph_{\lambda,\Vxi}}

\addtolength{\textwidth}{3.6cm}
\addtolength{\hoffset}{-1.8cm}
\addtolength{\textheight}{1cm}
\addtolength{\voffset}{-0.6cm}

\author{Azahara DelaTorre  \\ \small Mathematisches Institut, Albert-Ludwigs-Universit\"{a}t Freiburg \\ \footnotesize \texttt{azahara.de.la.torre@math.uni-freiburg.de} \and Gabriele Mancini \\ \small Dipartmento SBAI, Sapienza Universit\`a  di Roma \\ \small supported by INdAM - Istituto Nazionale di Alta Matematica\\ \footnotesize \texttt{gabriele.mancini@uniroma1.it}  \and Angela Pistoia \small \\ \small Dipartmento SBAI, Sapienza Universit\`a  di Roma \\ \footnotesize \texttt{angela.pistoia@uniroma1.it}  }

\title{Sign-changing solutions for the one-dimensional non-local sinh-Poisson equation}

\newtheorem{trm}{Theorem}[section]
\newtheorem{prop}[trm]{Proposition}
\newtheorem{cor}[trm]{Corollary}
\newtheorem{lemma}[trm]{Lemma}
\newtheorem{rem}[trm]{Remark}

\newenvironment{proof}{\noindent\emph{Proof.}}{\phantom{ } \hfill$\square$\medskip}
\newenvironment{Si}[1]{\left\{\begin{array}{#1}}{\end{array} \right. }

\usepackage[libertine]{newtxmath}

\DeclareMathOperator{\loc}{loc}

\DeclareMathOperator*{\dist}{dist}

\begin{document}

\maketitle

\begin{abstract}
We study the existence of sign-changing solutions for a non-local version of the sinh-Poisson equation on a bounded one-dimensional interval $I$, under Dirichlet conditions in the exterior of $I$. This model is strictly related to the mathematical description of galvanic corrosion \-phenomena for simple electrochemical systems. By means of the finite-dimensional Lyapunov-Schmidt reduction method,  we construct bubbling families of solutions developing an arbitrarily prescribed number sign-alternating peaks. With a careful analysis of the limit profile of the solutions, we also show that the number of nodal regions coincides with the number of blow-up points. 
\end{abstract}

\maketitle

\section{Introduction}

In this work we consider the  non-local sinh-Poisson equation given by
\begin{equation}\label{Eq}
(-\Delta)^\frac{1}{2} u = \lambda \left( e^{u}-e^{-u} \right)  \ \text{ in } I:=(-1,1), \qquad u\equiv 0 \text{ on } \R\setminus I,
\end{equation}
{with $\lambda\in \R^+$}. This equation is related to mathematical models for the description of galvanic corrosion of a planar electrochemical system consisting of an electrolyte solution and an adjoining metal surface. If the electrolyte is confined in a domain $\Omega\subseteq \R^2$, the electrolytic voltage potential satisfies the non-linear boundary value problem 
\begin{equation}\label{EqGen}
\begin{cases}
\Delta v =0  & \text{ in }\Omega, \\
\frac{\partial v}{\partial \nu} = \lambda \left(e^{\beta v} -e^{-(1-\beta) v}\right) + g  & \text{ on }\partial \Omega,
\end{cases}
\end{equation}
where $\lambda$ and $\beta$ are constants depending on the constituents of the system and $g$ models an externally imposed  current. We refer to \cite{BookCorr,VogXu} for the mathematical derivation of this model, which is due to Butler and Volmer. If one takes $\beta = \frac{1}{2}$ and $g=0$ the problem {becomes equivalent} to 
\begin{equation}\label{EqDomain}
\begin{cases}
\Delta v =0  & \text{ in }\Omega, \\
\frac{\partial v}{\partial \nu} = \lambda \left(e^{v} -e^{-v}\right) & \text{ on }\partial \Omega,
\end{cases}
\end{equation}
Problems \eqref{EqGen}-\eqref{EqDomain}, {and corresponding ones in higher dimension}, have been studied by several authors in recent years (see e.g. \cite{VogXu, KavVog, DDMW,PaPiPiVa,PaPi3D}). Among several generalizations, we mention that  Alessandrini and Sincich \cite{AleSin,AleSin2} have considered  problems of the form 
$$
\begin{cases}
\Delta v =0  & \text{ in }\Omega\subseteq \R^2, \\
\frac{\partial u}{\partial \nu} = \lambda \left(e^{v} -e^{-v}\right) & \text{ on }\Gamma_1,\\
\frac{\partial v}{\partial \nu} = g & \text{ on }\Gamma_2, \\
 v =0   &  \text{ on } \partial\Omega\setminus (\Gamma_1 \cup\Gamma_2),
\end{cases}
$$
where $\Gamma_1$ and $\Gamma_2$ are two open, disjoint portions of $\partial\Omega$. Here $\Gamma_1$ represents the corroded part of $\partial \Omega$, which is not accessible to direct inspection, $\Gamma_2$ is the portion of $\partial \Omega$ where current density can be directly measured, and the remaining part of $\partial \Omega$ is assumed to be grounded.
In this work we want to consider the strictly related problem in which $\Omega = \R^2_+$ is the upper half plane, $\Gamma_1$ is a segment and $\Gamma_2=\emptyset$. Namely, we have 
\begin{equation}\label{EqHP}
\begin{cases}
\Delta v =0  & \text{ in }\Omega, \\
\frac{\partial v}{\partial \nu} = \lambda \left(e^{v} -e^{-v}\right) & \text{ on } I \times \{0\},\\
 v =0 &  \text{ on } (\R\setminus I)\times \{0\}.
\end{cases}
\end{equation}
There is a strict connection between problems \eqref{EqHP} and \eqref{Eq}. Indeed, the Poisson harmonic extension of any solution to \eqref{Eq} solves \eqref{EqHP}. Viceversa, if $v$ solves \eqref{EqHP} and has finite Dirichlet energy, then the boundary trace $u=v(\cdot,0)$ is a solution of \eqref{Eq}.   

Equation \eqref{Eq} can also be considered as a 1-dimensional version of the planar sinh-Poisson problem
\begin{equation}\label{Eq2d}
\begin{cases}
-\Delta u = \lambda (e^{u}-e^{-u}) & \text{ in }\Omega\subseteq \R^2, \\
u= 0 & \text{ on }\partial \Omega,
\end{cases}
\end{equation} 
which arises in the statistical mechanics approach proposed by Onsager \cite{Ons} and Joyce and Montgomery \cite{JM,MJ} to the description of two-dimensional turbulent Euler flows with null total vorticity (we refer to \cite{Chorin, ChoMar, MarPul, Ons} for a physical discussion of this problem).  

\medskip
In recent {years, there has been a great interest} in the construction of sign-changing solutions for problems  \eqref{EqDomain} and \eqref{Eq2d}. When $\Omega $ is the unit disk of $\R^2$, explicit families of solutions to \eqref{EqDomain} were exhibited by Bryan and Vogelius in \cite{BriVog}. As $\lambda\to 0$, such solutions develop an even number of sign-alternating peaks concentrating in separate points of $\partial \Omega$. In \cite{DDMW}, D\'avila, Del Pino,  Musso and Wei proved that solutions of \eqref{EqDomain} with an analogous behavior exist on arbitrary {bounded} domains  with smooth boundary. In fact, for any even $k\in \N$, they constructed two independent branches of solutions developing $k$ sign-alternating peaks on $\partial \Omega$.  In this result, $k$ must be even to ensure the existence of sign-alternating peaks configurations on  $\partial \Omega$, whose connected components are closed curves.  We also refer to  \cite{KavVog,MedVog} for a-priori analysis of blowing-up solutions to \eqref{EqDomain}. 

Concerning problem \eqref{Eq2d}, the existence of sign-changing solutions developing exactly 2 peaks with different sign has been obtained by Bartolucci and Pistoia in \cite{BaPi}. More generally, they proved that if $\xi_1,\ldots,\xi_k\in \Omega$ correspond to a stable critical point of a generalized version of the $k$-point Kirchhoff-Routh path function, then there is a solution $u_\lambda$ of \eqref{Eq2d} such that 
$$
\lambda (e^{u}-e^{-u}) \to 8\pi \sum_{i=1}^k (-1)^i \delta_{\xi_i}
$$
{as $\lambda \to 0$} in the sense of measures, where $\delta_{\xi_i}$ denotes the Dirac delta at $\xi_i$, $i=1,\ldots,k$. The existence of stable critical points of the generalized Kirchhoff-Routh function for $k\ge 3$ was studied by Bartsch, Pistoia and Weth in \cite{Bapiwe}. They give existence results on any domain for $k=3,4$, and on  axially symmetric domains for arbitrary $k\ge 1$. In the latter case, the points $\xi_1,\ldots,\xi_k$ are located on the symmetry axis of $\Omega$ and the corresponding solution of \eqref{Eq2d} develops sign-alternating peaks at $\xi_1,\ldots,\xi_k$.

\medskip
Inspired by these results, the main purpose of this paper is to discuss the existence, for small values of $\lambda$, of  a branch solutions of $\eqref{Eq}$  with an arbitrarily prescribed number of nodal regions. Specifically, for any $k\in \mathbb N$, we will construct a branch of solutions with exactly $k$ sign-alternating peaks in the interval $I=(-1,1)$.

\begin{trm}\label{MainThm}
For any $k\in \N$, there exist $\lambda_{0}>0$ and a family $(u_\lambda)$, defined for $\lambda\in (0,\lambda_0)$, of weak solutions to \eqref{Eq} with exactly $k$ nodal regions. Moreover, for any sequence $(\lambda_n)_{n\in \N} \subset (0,\lambda_0)$ with $\lambda_n \to 0$ as $n\to +\infty$, there exist $\xi_1,\ldots,\xi_k\in I$ with $\xi_1<\xi_2<\ldots<\xi_k$, such that (along a subsequence) we have  
\begin{itemize}
\item $u_{\lambda_n}$ blows-up at $\xi_i$ as $n\to \infty$ for $i=1,\ldots,k$. Namely,  for any $\eps>0$, we have 
\begin{equation}\label{blow-up}
\sup_{(\xi_i-\eps,\xi_i+\eps)} (-1)^{i-1} u_{\lambda_n} \to +\infty, \quad {\text{as }n\to +\infty}. 
\end{equation} 
\item $u_{\lambda_n} \to 2\pi \sum_{i=1}^k(-1)^{i-1} G_{\xi_i}$, in $C^\infty_{loc}(I\setminus \{\xi_1,\ldots,\xi_k\})\cap {C^{0,\frac{1}{2}}_{loc}}(\bar I\setminus\{\xi_1,\ldots,\xi_k\})\quad { \text{as } n\to +\infty}$, where $G_{\xi_i}$ is the Green function for $(-\Delta)^\frac{1}{2}$  with singularity at $\xi_i$ ({explicitly given by} \eqref{FormulaGI}).  
\end{itemize}
\end{trm}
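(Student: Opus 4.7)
The plan is to carry out a finite-dimensional Lyapunov--Schmidt reduction around a suitable sum of sign-alternating bubbles. The natural building blocks are the standard solutions of $\fl U = e^{U}$ on $\R$, namely the Liouville-type profiles
$$
U_{\de,\xi}(x) = \log\frac{2\de}{\de^2 + (x-\xi)^2},
$$
whose $\pm$-sign copies account for the two branches of the sinh-nonlinearity. Given an ordered tuple $\Vxi = (\xi_1,\ldots,\xi_k)\in I^k$ and concentration rates $\Vdelta = (\de_1,\ldots,\de_k)$ with $\de_i=\de_i(\lambda)\to 0$, together with amplitude parameters $\Va$ absorbing the interaction constants, the first step is to construct the Ansatz
$$
\w = \sum_{i=1}^k (-1)^{i-1} P U_{\de_i,\xi_i},
$$
where $P$ denotes the projection producing functions in $\h$ so that the exterior Dirichlet condition in \eqref{Eq} is enforced. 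Matching the asymptotics of $\lambda(e^{\w}-e^{-\w})$ with those of $\fl\w$ both near each $\xi_i$ and away from it forces a choice of the form $\de_i\sim a_i\lambda$ with explicit constants $a_i>0$ determined by the Green function values $G_{\xi_i}(\xi_j)$.

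Writing a candidate solution as $u = \w + \pl$, the error $\pl$ must solve a nonlinear equation whose linearization at $\w$ carries an approximate $2k$-dimensional kernel generated by the translation and dilation modes of each bubble. Using the classification of bounded solutions of the linearized Liouville equation $\fl v = e^{U_{\de,\xi}}v$ on $\R$, one obtains uniform invertibility of the linearized operator modulo this kernel, in weighted norms adapted to the bubble profiles. A standard contraction argument in $\h$ then produces, for every admissible $(\Vdelta,\Vxi)$, a remainder $\pl = \pl(\Vdelta,\Vxi)$ with $\|\pl\|_{\h}=o(1)$ as $\lambda\to 0$, depending smoothly on the parameters, which solves the projected nonlinear problem.

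The remaining reduced problem consists in choosing $(\Vdelta,\Vxi)$ so that the Lagrange multipliers associated with the kernel projections vanish, which is equivalent to finding critical points of the reduced functional $J_\lambda(\Vdelta,\Vxi):=I_\lambda(\w + \pl)$, with $I_\lambda$ the energy of \eqref{Eq}. A careful expansion yields, modulo explicit additive constants and lower-order corrections, a leading-order Kirchhoff--Routh-type functional involving the regular part of $G_{\xi_i}$ at each $\xi_i$ and the weighted interactions $(-1)^{i+j}G_{\xi_i}(\xi_j)$. Because the concentration points are ordered on a one-dimensional interval and the one-dimensional Green function has an explicit sign pattern, a stable critical configuration of this leading functional can be extracted for every $k\ge 1$ by a direct minimization (or min-max) argument, and a perturbative stability argument then provides a critical point of the full $J_\lambda$ for all sufficiently small $\lambda$. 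Property \eqref{blow-up} is then immediate from the form of $\w$ near $\xi_i$, and the convergence to $2\pi\sum_i(-1)^{i-1}G_{\xi_i}$ in $C^\infty_{\loc}\cap C^{0,1/2}_{\loc}$ away from $\{\xi_1,\ldots,\xi_k\}$ follows from the well-known asymptotic $PU_{\de,\xi}\to 2\pi G_\xi$ off $\xi$ and the smallness and regularity of $\pl$.

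The main obstacle I expect lies in the reduced problem: the alternating signs cause several of the leading interaction terms to cancel, so the expansion of $J_\lambda(\Vdelta,\Vxi)$ must be pushed to an order high enough that the remaining variational structure is non-degenerate and admits a critical configuration for every $k$. A second delicate point is the uniform linear theory across distinct concentration scales $\de_i$ in the presence of the non-local tails of $\fl$, which requires sharp pointwise and weighted estimates for the projected bubbles and their mutual interaction. Finally, matching the count of \emph{nodal regions} with the number $k$ of blow-up points (and not merely counting peaks) demands a precise analysis of the sign of $u_{\lambda_n}$ between consecutive $\xi_i$'s, achieved by combining the $C^\infty_\loc$ convergence to the alternating sum of Green functions with a parity argument on zeros in each subinterval.
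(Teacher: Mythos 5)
Your overall architecture coincides with the paper's (sign-alternating projected bubbles, $\de_i$ slaved to $\lambda$ and the Green function data, invertibility of the projected linearized operator, contraction for the remainder, and a reduced Kirchhoff--Routh-type functional whose interior maximum over ordered configurations exists for every $k$ thanks to the explicit one-dimensional Green function). Two remarks on the reduction itself: since you already fix $\de_i\sim a_i\lambda$ explicitly, the approximate kernel you should work with is only $k$-dimensional (the translation modes); in this problem the projected dilation modes are in fact \emph{not} admissible kernel elements, because $PZ_{0,j}=Z_{0,j}+1+O(\lambda^2)$, so their difference from $Z_{0,j}$ does not vanish -- the paper explicitly discards them for this reason. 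Also, no higher-order expansion of the reduced energy is needed: with alternating signs the leading functional $\sum_i H(\xi_i,\xi_i)+\sum_{i\neq j}(-1)^{i+j}G(\xi_i,\xi_j)$ does not degenerate; its interaction part is nonpositive and the functional tends to $-\infty$ at the boundary of the ordered simplex, which already yields a compact set of interior maxima (it is the \emph{equal}-sign case that fails).

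The genuine gap is in the nodal count. Local $C^\infty$ convergence to $u_0=2\pi\sum_i(-1)^{i-1}G_{\xi_i}$ on compact subsets of $I\setminus\{\xi_1,\ldots,\xi_k\}$, plus a parity argument, controls the zeros only on the middle intervals $[\xi_i+\eps,\xi_{i+1}-\eps]$ (and even there you need strict monotonicity of $u_0$, which follows from the explicit bound $(-1)^{j}u_0'(x)\sqrt{1-x^2}\ge c>0$ on $(\xi_j,\xi_{j+1})$, to exclude several zeros rather than an odd number). It says nothing about the two boundary regions $(-1,\xi_1-\eps]$ and $[\xi_k+\eps,1)$, where $u_0$ itself vanishes at $\pm1$ like $\sqrt{d}$, $d(x)=1-|x|$: uniform (even H\"older) convergence there cannot exclude extra sign changes of $u_\lambda$ near the endpoints. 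The paper closes this by proving the weighted estimate $\|(u_\lambda-u_0)/\sqrt{d}\|_{C^{0,\alpha}}\to 0$ up to the endpoints, obtained from the Ros-Oton--Serra boundary regularity for $(-\Delta)^{1/2}$ applied to $u_\lambda\psi$ with a cutoff $\psi$, after estimating the nonlocal commutator term $\int(u_\lambda(x)-u_\lambda(y))(\psi(x)-\psi(y))|x-y|^{-2}dy$; combined with the lower bound $|u_0|\ge\sqrt{2d}$ near $\pm1$ (again from the derivative bound above) this gives $|u_\lambda|>0$ there. You also need a separate argument excluding zeros in the peak intervals $(\xi_i-\eps,\xi_i+\eps)$, where neither limit holds uniformly; the paper uses the expansion $u_\lambda=(-1)^{i-1}\log\bigl(\de_i^2+|x-\xi_i(\lambda)|^2\bigr)^{-1}+O(1)$ to get $|u_\lambda|\ge 1$ there. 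Without these three ingredients (weighted boundary control, monotonicity of $u_0$ via the explicit formula for $G'_{\xi}$, and largeness of $|u_\lambda|$ near each peak), the claim of exactly $k$ nodal regions does not follow from the convergence statements you invoke.
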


 The solutions provided in Theorem \ref{MainThm} can be considered as the analogue of the ones constructed in \cite{Bapiwe} for problem \eqref{Eq2d} but, working in dimension 1, we are able to obtain a  stronger result and to show that the number of nodal regions coincides with the number of peaks, which doesn't seem to be known for the solutions in \cite{Bapiwe}, except for the case $k=2$ (see \cite{BaPi}).  
 
Our solutions are also strictly related to the solutions of \eqref{EqDomain} with an even number of peaks constructed in \cite{DDMW}. However, here we do not need to impose the evenness of $k$ as the interaction between the first and the last peak is weaker than the interaction with intermediate peaks (unless $k=2$). It should be noted that if $u$ solves \eqref{Eq}, then $-u$ is a solution as well. Thus, Theorem \ref{MainThm} provides two distinct branches of solutions. But, due to the lack of topology of $I$, we cannot expect the existence of other independent branches of solutions with separate peaks as in \cite{DDMW}. Nevertheless, we strongly believe it would be possible to find a different branch of solutions with $k$ nodal regions which develops a  tower of peaks at the origin. Solutions of this kind were constructed for problem \eqref{Eq2d} by Grossi and Pistoia in \cite{GrossiPistoia13} (see also \cite{Pistoia-Ricciardi}).
\medskip
 
Differently from the approach in \cite{DDMW}, we will not rely on {the connection between \eqref{Eq} and} the extended problem \eqref{EqHP}. Instead, the proof of Theorem \ref{MainThm} will be based on the Lyapunov-Schmidt finite-dimensional reduction method, {which has been successfully used to find solutions to \eqref{Eq2d} and other similar problems (see e.g. \cite{BaPi,dkm,egp}). Here, we will apply this technique on the fractional-order Sobolev space  $X^\frac{1}{2}_{0}(I)$, which is defined as the space of all the functions in $H^\frac{1}{2}(\R)$ which  vanish identically outside $I$. A Hilbert structure on $X^\frac{1}{2}_{0}(I)$  is determined by} the scalar product 
\begin{equation}\label{scalar}
<u,v>:= \int_{\R}(-\Delta)^\frac{1}{4} u (-\Delta)^\frac{1}{4} v \, dx, \quad u,v \in X^\frac{1}{2}_0(I),
\end{equation}
with  the corresponding norm given   by 
\begin{equation}\label{norm}
\|u\|:=\| u\|_{X^{1/2}_0}={\|(-\Delta)^\frac{1}{4} u\|_{L^2(\R)}\equiv} \|(-\Delta)^\frac{1}{4} u\|_2,\quad u \in X^\frac{1}{2}_0(I).\end{equation} 
In order to prove Theorem \ref{MainThm}, we proceed as follows. For $\delta>0$ $\xi\in \R$, let us consider the 1-dimensional bubble
\begin{equation}\label{U_dt}
U_{\delta,\xi}(x):= \log\left( \frac{2\delta}{\delta^2+ |x-\xi|^2} \right),
\end{equation}
which solves the fractional-order Liouville equation \cite{DaLMar,HydStructure}
\begin{equation}\label{Liouville}
\fl U_{\de,\xi} = e^{U_{\de,\xi}} \quad \text{ in }\R.
\end{equation}
Note that $U_{\de,\xi}$ blows-up at $\xi$ as $\delta\to 0^+$, namely $\dis{\lim_{\de\to 0^+}U_{\de,\xi}(\xi)=+\infty}$ and $\dis{\lim_{\de\to 0^+}U_{\de,\xi}(x) = -\infty}$, for $x \in \R\setminus\{ \xi \}$. 
%It is possible to correct the bubble $U_{\de,\xi}$ to make it vanish outside $I$.  
For any $\delta>0$, $\xi\in \R$, let $PU_{\de,\xi}$ be the projection of $U_{\de,\xi}$ on $X^\frac{1}{2}_0(I)$, that is 
$$
PU_{\de,\xi}: = (-\Delta)^{-\frac{1}{2}} e^{U_{\de,\xi}},
$$ 
where $(-\Delta)^{-\frac{1}{2}}$ represents the inverse of the  half Laplacian $\fl$. Intuitively, for suitable choices of $\delta$ and $\xi$ we will use $P U_{\de,\xi}$ to model each peak developed by solutions of \eqref{Eq} as $\lambda\to 0$.   Indeed,  Theorem \ref{MainThm}  can be deduced by the following result:

\begin{trm}\label{TrmBetter}
For any $k\in \N$, we can find $\lambda_0>0$ such that, for any $\lambda\in (0,\lambda_0)$, there exist $\delta_{i}=\delta_i(\lambda)\in (0,+\infty)$, $\xi_i = \xi_i(\lambda) \in I$, $i=1,\ldots,k$, and $\ph_\lambda \in X^\frac{1}{2}_0(I)\cap L^\infty(I)$ such that:
\begin{itemize}
\item $\sum_{i=1}^k (-1)^{i-1} PU_{\delta_i,\xi_i} + \ph_\lambda $ is a solution of \eqref{Eq}, for any $\lambda \in (0,\lambda_0)$.
\item  $\displaystyle{\lim_{\lambda\to 0} \delta_i(\lambda) =0}$, for any $i=1,\ldots,k$.
\item  There exists a small $\eta_0\in (0,\frac{2}{k+1})$  (not depending on $\lambda$) such that $-1+\eta_0\le \xi_1 <\ldots<\xi_k \le 1-\eta_0$,  and  $ \displaystyle{\min_{1\le i\le k-1}\xi_{i+1}-\xi_i \ge \eta_0}$, for any $\lambda \in (0,\lambda_0)$.
\item $\|\ph_\lambda \|+ \|\ph_{\lambda}\|_{L^\infty(I)}\to 0 $ as $\lambda \to 0$.
\end{itemize} 
\end{trm}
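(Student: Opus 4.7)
The plan is to run a finite-dimensional Lyapunov--Schmidt reduction in the Hilbert space $\h$. Set the ansatz
$$
W_{\Vde,\Vxi}:=\sum_{i=1}^k(-1)^{i-1}PU_{\de_i,\xi_i}
$$
and look for solutions of the form $u=W_{\Vde,\Vxi}+\ph$, so that $\ph$ must satisfy
$$
L_\lambda\ph \;=\; N_\lambda(\ph)+R_\lambda,
$$
with $L_\lambda\ph:=\fl\ph-\lambda f'(W_{\Vde,\Vxi})\ph$, $f(s):=e^s-e^{-s}$, $R_\lambda:=\lambda f(W_{\Vde,\Vxi})-\fl W_{\Vde,\Vxi}$ the error of the ansatz, and $N_\lambda$ the purely nonlinear remainder. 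I would restrict $(\Vde,\Vxi)$ to the compact admissible set $\mathcal{A}_{\eta_0}$ consisting of all configurations with $-1+\eta_0\le\xi_1<\dots<\xi_k\le 1-\eta_0$, $\xi_{i+1}-\xi_i\ge\eta_0$ and $\de_i=d_i\mu$, $d_i\in[\eta_0,\eta_0^{-1}]$, where $\mu=\mu(\lambda)\to 0$ is the natural bubble scale chosen so that $\lambda e^{U_{\de_i,\xi_i}(\xi_i)}=O(1)$ (concretely $\mu\sim\lambda$ up to logarithmic corrections coming from the Robin function). Using the standard expansion $PU_{\de,\xi}=U_{\de,\xi}+2\pi H(\cdot,\xi)-\log(2\de)+o(1)$, with $H$ the regular part of $G_\xi$, together with the Liouville identity~\eqref{Liouville}, a direct computation shows that $\|R_\lambda\|_{**}=o(1)$ uniformly on $\mathcal{A}_{\eta_0}$, for a suitable dual norm $\|\cdot\|_{**}$.

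The second and, in my view, main step is to prove the uniform invertibility of the projection $\Pi^\perp\!\circ L_\lambda$ onto the orthogonal complement of the approximate kernel
$$
K_{\Vde,\Vxi}:=\operatorname{span}\bigl\{PZ_i^\alpha:\;i=1,\dots,k,\ \alpha=0,1\bigr\},\qquad Z_i^0:=\partial_{\de}U_{\de_i,\xi_i},\ Z_i^1:=\partial_\xi U_{\de_i,\xi_i},
$$
with an operator-norm bound on the inverse that is uniform in $(\Vde,\Vxi)\in\mathcal{A}_{\eta_0}$ and in small $\lambda$. The argument is by contradiction: given $\ph_n\in K_{\Vde_n,\Vxi_n}^\perp$ with $\|\ph_n\|_*=1$ and $\|L_{\lambda_n}\ph_n\|_{**}\to 0$, one rescales $\ph_n$ around each concentration point $\xi_i$ at scale $\de_i$ and invokes the non-degeneracy of the Liouville bubble~\eqref{Liouville} -- whose linearization in a suitable weighted class has kernel exactly spanned by $\partial_\de U_{\de,\xi}$ and $\partial_\xi U_{\de,\xi}$ (cf.\ \cite{DaLMar}) -- to deduce that each local limit must vanish, contradicting $\|\ph_n\|_*=1$. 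This is the most delicate point because it forces one to combine the non-local character of $\fl$, the exponential growth of $f'(W_{\Vde,\Vxi})$ near each peak, the orthogonality conditions, and the coherent handling of all $k$ bubbles together with their mutual interactions on different scales.

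Once the linear theory is in hand, a standard contraction-mapping argument in a small ball of $K_{\Vde,\Vxi}^\perp$ produces a unique $\ph_\lambda=\ph_\lambda(\Vde,\Vxi)$ solving the projected equation, smooth in the parameters, with $\|\ph_\lambda\|+\|\ph_\lambda\|_{L^\infty(I)}\to 0$ as $\lambda\to 0$. The full equation~\eqref{Eq} then reduces to the vanishing of the $2k$ Lagrange multipliers $c_i^\alpha(\Vde,\Vxi)$, which is equivalent to finding a critical point of the reduced energy $\mathcal{J}_\lambda(\Vde,\Vxi):=J_\lambda(W_{\Vde,\Vxi}+\ph_\lambda)$, where $J_\lambda$ is the natural energy of~\eqref{Eq}. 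Expanding $\mathcal{J}_\lambda$ through the Green-function representation yields
$$
\mathcal{J}_\lambda(\Vde,\Vxi)=a_0(\lambda)+\mu\bigl[\Psi(\Vde,\Vxi)+o(1)\bigr],
$$
with $\Psi$ a Kirchhoff--Routh-type functional combining the Robin-function self-interactions at each $\xi_i$ with the alternating cross-terms $(-1)^{i+j}G_{\xi_i}(\xi_j)$. Since the dependence of $\Psi$ on $\log d_i$ is strictly convex, I would first minimize in $\Vde$ explicitly, reducing to a functional of $\Vxi$ alone; because this limiting functional diverges to $+\infty$ when two consecutive peaks collide or some $\xi_i$ reaches $\partial I$, a minimum is attained in the interior of the admissible $\Vxi$-set. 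By $C^0$-stability, this produces a critical point of $\mathcal{J}_\lambda$ for every sufficiently small $\lambda$, yielding the family claimed in Theorem~\ref{TrmBetter}.
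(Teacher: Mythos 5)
Your overall Lyapunov--Schmidt scheme matches the paper's, but two of your structural choices create genuine gaps. First, you keep $\Vde$ as free parameters ($\de_i=d_i\mu$) and take a $2k$-dimensional approximate kernel containing the dilation directions $\partial_\de U_{\de_i,\xi_i}$. In this half-Laplacian setting that is exactly what the paper must avoid: by Lemma \ref{PZ}, $PZ_{0,j}=Z_{0,j}+1+O(\lambda^2)$, so the projected dilation mode differs from the true kernel element of the limit problem by the constant $1$, which is not small in any norm; the orthogonality condition $\langle \ph,PZ_{0,j}\rangle=0$ therefore does not pass to the needed condition $\int \frac{2Z_0\Psi_\infty}{1+y^2}\,dy=0$ in the rescaled limit, and your contradiction/non-degeneracy argument for uniform invertibility does not close. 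The paper instead uses only the $k$-dimensional kernel spanned by the $PZ_{1,j}$ and kills the $Z_0$-component by testing the equation against $PZ_{0,i}$ (Steps 4--5 of Lemma \ref{bound_psi}). Moreover, with $d_i$ only constrained to a compact interval, the leading error near $\xi_i$ is $e^{U_{\de_i,\xi_i}}\bigl(\frac{\lambda}{2\de_i}e^{F_i(\Vxi)}-1+\dots\bigr)$, which is of size $O(e^{U_{\de_i,\xi_i}})$ and hence \emph{not} $o(1)$ in any reasonable dual norm; smallness of the error (Lemma \ref{Est_E}: $\|E\|_{L^p}=O(\lambda^{1/p})$) requires the precise slaving $\de_i=\frac{\lambda}{2}e^{F_i(\Vxi)}$ of \eqref{delta}. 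Without it, the fixed point gives at best $\|\ph\|=O(1)$, which is not enough for the $L^\infty$-smallness claimed in Theorem \ref{TrmBetter} nor for the accuracy of the reduced-energy expansion.

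Second, your finite-dimensional step is internally inconsistent. Expanding the energy in a single bubble one finds $J_\lambda(PU_{\de,\xi})\approx -2\pi\log(2\de)+2\pi^2H(\xi,\xi)-\pi\lambda\,\de^{-1}e^{2\pi H(\xi,\xi)}$, which is strictly \emph{concave} in $\log\de$ (interior maximum at exactly the value \eqref{delta}), while after optimizing in $\de$ the $\Vxi$-dependence has the opposite extremal character; so the joint critical point in $(\Vde,\Vxi)$ is a saddle, not the min--min you describe, and a mere $C^0$-stability argument does not produce a critical point of $\mathcal J_\lambda$ near a saddle of the limit functional (you would need $C^1$-closeness or a genuinely stable min--max/linking structure, neither of which you establish). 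The paper sidesteps this entirely: with $\de$ eliminated from the start, the reduced functional depends on $\Vxi$ alone, Lemma \ref{Lemma conv} gives uniform $C^0$-convergence to (a constant plus a multiple of) $\F$ in \eqref{FirstF}, and Proposition \ref{Step_V} shows $\F\to-\infty$ at $\partial\mathcal P_k$, so an interior \emph{maximum} survives the $C^0$-perturbation (Corollary \ref{final}). Also note your sign claim for the boundary behaviour ("diverges to $+\infty$ when peaks collide") cannot coexist with convexity in $\log d_i$ under any single sign convention, which is a symptom of the same problem. Finally, a smaller point: the invertibility bound is not uniform here but $O(|\log\lambda|)$ (Lemma \ref{bound_psi}), which is why the fixed-point radius carries the $|\log\lambda|$ factor; your argument should track such a loss or justify why your weighted norms avoid it.
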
 

This work is organized as follows. In section \ref{Sec:Prelim}, we introduce the notation and state some preliminary results. For the reader's convenience, we also include, in Section \ref{Sec:Out}, an outline of the proof of Theorem \ref{TrmBetter}. The technical aspects of the proof are discussed in  Sections \ref{Sec:E}, \ref{Sec:Lin}, \ref{Sec:Fix} and \ref{Sec:Points}, as we will detail in Section \ref{Sec:Out}. Finally, the conclusion of the proof of Theorem \ref{TrmBetter} and the proof of Theorem \ref{MainThm} are given in Section \ref{Sec:Proofs}.

\section{Notation and preliminary results}\label{Sec:Prelim}
We start by recalling the definition and the main properties of the fractional Laplacian operator. For a given function $u$ in the Schwartz  space $\mathcal{S}$  of rapidly decreasing functions (see e.g. \cite{SteinSBook}), we can define 
$$
(-\Delta)^s u = \mathcal F^{-1}\left(|\xi|^{2s}\mathcal{F}(u)(\xi)\right), \quad s\in (0,1), 
$$
where $\mathcal F$ and $\mathcal F^{-1}$ denote respectively the Fourier and the inverse Fourier transform operators. In fact, this definition makes sense when $|\xi|^{2s}\mathcal{F}(u) \in L^2(\R)$. More generally if $u$ belongs to the space 
\[
L_{s}(\R) :=\left\{ u\in L^1_{loc}(\R)\;:\; \int_{\R}\frac{|u(x)|}{1+|x|^{1+2s}} dx <+\infty\right\},
\]
it is possible to define $(-\Delta)^s$ as the tempered distribution 
$$
<(-\Delta)^s u, \ph> = \int_{\R} u (-\Delta)^s\ph dx, \quad \ph \in \mathcal S. 
$$ 
{
For $s\in (0,1)$, let $H^s(\R)$ denote the fractional-order Bessel potential space
$$
H^s(\R) := \{ u \in L^2(\R)\;:\; |\xi|^{s}\mathcal F(u)\in L^2(\R)\}.
$$ 
This space can be equivalently defined as the space of $L^2$ functions for which the  Gagliardo seminorm is finite (see e.g. \cite[Section 3]{Hitch}). Similarly, $(-\Delta)^s$ can be characterized in terms of singular integrals as 
$$
(-\Delta)^s u(x) = c_{1,s} \, P. V. \int_{\R} \frac{u(x)-u(y)}{|x-y|^{1+2s}} dy,\quad c_{1,s}=\frac{-2^{2s}\Gamma(\frac{1}{2}+s)}{\pi^{\frac{1}{2}}\Gamma(-s)}.
$$
%There is an equivalent definition  $H^s(\R)$ and $(-\Delta)^s$ in terms of singular integrals. Indeed, it can be equivalently defined as the space of $L^2$ functions for which   the squared Gagliardo seminorm 
%$$
%\int_{\R}\int_{\R} \frac{|u(x)-u(y)|^2}{|x-\xi|^{1+2s}}dx 
%$$  
%is finite (see e.g. \cite{Hitch}). 
Throughout the paper,  we will always denote $I:=(-1,1)\subseteq \R$ and  we will consider the space
$$X^\frac{1}{2}_0(I)=\{u\in H^\frac{1}{2}(\R) \: :\;   u\equiv 0 \text{ in }\R\setminus I\},$$ which is a Hilbert space with respect to the scalar product  given in \eqref{scalar}. The corresponding norm will be denoted as in \eqref{norm}.}

 Given $p\ge 1$ and a function $f\in L^p(I)$, we say that $u$ is a weak solution of the problem 
\begin{equation}\label{WeakSol}
\begin{cases}
\fl u = f  & \text{ in I},\\
u = 0 & \text{ in } \R\setminus I,
\end{cases}
\end{equation}
if $u\in X^\frac{1}{2}_{0}(I)$ and {it} satisfies 
$$
\int_{\R}  (-\Delta)^\frac{1}{4}\ph (-\Delta)^\frac{1}{4} u\;    dx = \int_{\R} f \ph \, dx , \quad \forall \ph \in C^\infty_c(I). 
$$

Let also $(-\Delta)^{-\frac{1}{2}}$  represent the inverse of $(-\Delta)^{\frac{1}{2}}$. For any given $p\in (1,+\infty)$, the restriction of $(-\Delta)^{-\frac{1}{2}}$ to $L^p(I)$ is defined by 
\[
\begin{split}
(-\Delta)^{-\frac{1}{2}}: L^p(I)&\rightarrow X^{\frac{1}{2}}_0(I)\\
f&\mapsto u\text { solution to } \eqref{WeakSol}. 
%\left\{
%\begin{split}
%\fl u&=f\quad I\\
%u&=0\quad \R\setminus\{ I\}
%\end{split}\right.
\end{split}.
\]
This operator coincides with the adjoint of the inclusion operator $i_{p'}:X^\frac{1}{2}_0(I)\rightarrow L^{p'}(I)$, where $p'=\frac{p}{p-1}$. In particular, for any $p\in (1,+\infty)$, there exists a constant $C(p)$ such that 
\begin{equation}\label{EllEst}
\|(-\Delta)^{-\frac{1}{2}} f \| \le  C(p) \|f\|_{L^p(I)}, \quad \forall\, f \in L^p(I). 
\end{equation}

The operator $(-\Delta)^{-\frac{1}{2}}$ can also be defined via an explicit representation formula. Throughout the paper, for any $\xi\in I$ we will denote by $G_\xi$ the Green function for $\fl$ on $I$ with singularity at $\xi$, which is given explicitly (see \cite{bucur}) by the formula 
\begin{equation}\label{FormulaGI}
G_\xi(x):= \begin{cases}
\frac{1}{\pi} \log \left(\frac{1-\xi x {+} \sqrt{(1-\xi^2)(1-x^2)}}{|x-\xi|}\right), &   x\in I,
\\ 0,  & x \in \R \setminus I. 
\end{cases}
\end{equation}
We shall often use the notation $G(\xi,x)$ in place of $G_\xi(x)$. 
{For any $f \in L^p(I)$, we have the representation formula 
$$(-\Delta)^{-\frac{1}{2}} f(x)= \int_{I} G_x(y) f(y) dy, \quad x\in I.$$}
We {will} also denote by 
{$H(\xi,x)$ }the regular part of $G_\xi$, namely 
\begin{equation}\label{Robin}
H(\xi,x):= G_{\xi}(x)-\frac{1}{\pi} \log \frac{1}{|x-\xi|}. 
\end{equation}

\subsection{Outline of the proof of Theorem \ref{TrmBetter}: The Lyapunov-Schmidt reduction method}\label{Sec:Out}

For a given $k\in \N$, we fix a small $\eta>0$  with $0<\eta<\frac{2}{k+1}$ and we  define 
\[
\mathcal P_{k,\eta}:=\{\Vxi =(\xi_1,\ldots,\xi_k) \; :\; 1+\xi_1>\eta,\  \xi_k<1-\eta \text{ and } \xi_{i+i}-\xi_i>\eta, i =1,\ldots, k-1 \}.
\]
For $\Va=(a_{1},\ldots,a_{k})
 \in \{-1,1\}^{k}$, $\Vdelta=(\de_1,\ldots,\de_k)\in (0,1)^k$, and $\Vxi = (\xi_1,\ldots,\xi_k)\in \mathcal P_{k,\eta}$,  we denote 
\begin{equation}\label{w}
\w := \sum_{i=1}^k a_{i}  P U_{\de_i,\xi_i}.
\end{equation}
{For the proof of Theorems \ref{MainThm} and \ref{TrmBetter} we could fix $a_i=(-1)^{i-1}$, $1\le i\le k$. But, since many of the estimates  given throughout the paper hold true for a generic choice of the coefficients $a_i$, we will only fix them when it is necessary. }

  Our goal is to find solutions for  \eqref{Eq} of the form
$$
u = \w +\ph, 
$$
where $\ph \in {X}_0^\frac{1}{2}({I}) \cap L^\infty(I)$ is small with respect to both the $X^\frac{1}{2}_0(I)$ and the $L^\infty(I)$ norm.  Throughout the paper we will denote $\f(u):=\lambda \left( e^{u}-e^{-u} \right).$  Then, in terms of $\ph$ equation \eqref{Eq} reads as
\[\begin{split}
\fl \ph &= \f(\w+\ph) -\fl \w \\
& = \underbrace{\f(\w)  -\fl \w}_{=:E}  + \f'(\w) \ph  + \underbrace{\f(\w+\ph) -\f(\w)-\f'(\w)\ph}_{=:N(\ph)},
\end{split}\]
that is 
$$
\fl \ph - \f'(\w) \ph = E + N(\ph).
$$
{It is convenient to rewrite this equation as 
\begin{equation}\label{DefTildeL}
L {\ph}= (-\Delta)^{-\frac{1}{2}} E +  (-\Delta)^{-\frac{1}{2}} N(\ph), \quad \text{ where }\quad  L := (-\Delta)^{-\frac{1}{2}}L=Id-(-\Delta)^{-\frac{1}{2}}\f'(\w). 
\end{equation}} 
For simplicity, here and in the rest of the paper, we will not specify the dependence of $E,N$ and $ L$ on $\lambda, \Va,\Vde$, and $\Vxi$.

 We will prove that there exists a $k-$dimensional subspace $K_{\Vdelta,\Vxi}$ of ${X^\frac{1}{2}_0}(I)$ such that {$ L$} is invertible on $K_{\Vdelta,\Vxi}^\perp$. The space $K_{\Vdelta,\Vxi}$ is spanned by the functions $PZ_{1,i}:= (-\Delta)^{-\frac{1}{2}} e^{U_{\de_i,\xi}}Z_{1,i}$, $i=1,\ldots,k$, where $Z_{1,i}$ is the unique solution which vanishes at infinity of the linearization of \eqref{Liouville} around $U_{\de_i,\xi_i}$ (see Section \ref{Sec:Lin}). 
{Let $\pi : X_0^{\frac{1}{2}} (I)\mapsto K_{\Vde,\Vxi}$ and  $\pi^\perp: X_0^{\frac{1}{2}} (I)\mapsto K_{\Vde,\Vxi}^\perp$  be the projections of $X^\frac{1}{2}_0(I)$ into $K_{\Vde,\Vxi}$ and $K_{\Vde,\Vxi}^\perp$ respectively. Since $X_0^{\frac{1}{2}} (I)=K_{\Vde,\Vxi}\oplus K_{\Vde,\Vxi}^\perp$,   equation \eqref{DefTildeL}} is equivalent to the {following couple of non-linear} problems:
\begin{equation}\label{proj1}
\pi^{\perp} L\ph =  \pi^{\perp}(-\Delta)^{-\frac{1}{2}}  E +  \pi^{\perp}(-\Delta)^{-\frac{1}{2}}  N(\ph).
\end{equation}
\begin{equation}\label{proj2}
\pi L\ph =  \pi(-\Delta)^{-\frac{1}{2}}  E + \pi (-\Delta)^{-\frac{1}{2}}  N(\ph). 
\end{equation}
Exploiting the invertibility of $\pi^\perp  L$ on $K_{\Vde,\Vxi}$, one formulates equation \eqref{proj1} in terms of a fixed point problem for $\ph$. Such problem can be solved if the error term $E$ has small $L^p(I)$ norm for some $p\in (1,2)$, the nonlinear tern $N(\ph)$ decays faster than $\|\ph\|$, and the operator norm of $(\pi^\perp  L)^{-1}$ can be controlled in terms of $\lambda$. We will show that for any choice of $\eta \in (0,\frac{2}{k+1})$, $\Vxi =  (\xi_1,\ldots,\xi_k)\in \mathcal P_{k,\eta}$ and $\lambda$ small enough, these conditions are satisfied by a suitable choice of $\Vdelta =(\delta_1,\ldots, \delta_k)$ depending on $\lambda$ and $\Vxi$. {Specifically}, there exists $\Vdelta = \Vdelta_{\lambda,\xi}\in (0,1)^k$ and $\ph=\ph_{\lambda,\Vxi}\in X^\frac{1}{2}_0(I)$ such that \eqref{proj1} holds. In other words, there exist coefficients $c_i= c_i(\lambda,\xi)$, $i=1,\ldots,k$,  which depend continuously on {$\xi$} such that 
$$
 L \ph_{\lambda,\Vxi} = \pi^\perp (-\Delta)^{-\frac{1}{2}} E +\pi^\perp (-\Delta)^{-\frac{1}{2}} N(\ph_{\lambda,\xi}) = \sum_{j=1}^k c_j  P Z_{1,j}.
$$
Then, $\ph_{\lambda,\xi}$ solves equation \eqref{proj2} if and only if
\begin{equation}\label{System}
c_{i}(\lambda,\Vxi) =0, \quad i =1,\ldots,k. 
\end{equation}
The proof of Theorem \ref{TrmBetter} can be concluded by proving that for any small $\lambda$, there exist $\xi_1, \ldots,\xi_k$ depending on $\lambda$ solving the finite dimensional system \eqref{System}. 

The rest of this paper is organized as follows. In Section \ref{Sec:E} we choose the parameters $\delta_1,\ldots,\delta_k$ and we provide point-wise and $L^p$ estimates on the error terms $E$ and $N$.  Section \ref{Sec:Lin} contains the precise definition of $K_{\Vdelta,\Vxi}$ and the analysis of the invertibility properties of  $\pi^\perp {L}$.  The fix point argument which allows to solve $\eqref{proj1}$ is explained in Section \ref{Sec:Fix}, while system \eqref{System} is studied in Section 6. Finally, we complete the proof of Theorems \ref{MainThm} and \ref{TrmBetter} in Section \ref{Sec:Proofs}. 

\medskip
For the proof described above it is important to point out that all the estimates in Sections \ref{Sec:E}-\ref{Sec:Points} will be uniform with respect to the choice of $\Vxi\in \mathcal P_{k,\eta}$ and of small values of $\lambda$ and $\Vdelta$. For this reason, given two quantities $\Theta_1$, $\Theta_2$ depending on $\lambda, \Vdelta, \Vxi$ and $\eta$ (and eventually other parameters), it is convenient to write $\Theta_1 = O(\Theta_2)$ to indicate $|\Theta_1|\le C \, \Theta_2$, for some constant $C>0$ that does not depend on $\Vxi$, $\Vdelta$ and $\lambda$ (but may depend on $\eta$ and the other parameters, unless otherwise specified). This notation will be used several times throughout the paper.

\section{Choice of the concentration parameters and estimates of the error terms}\label{Sec:E}
 {Let $\w$ be as in \eqref{w}. In order to perform the perturbation argument explained before, we need to be sure that $\w$ is a good approximate solution to \eqref{Eq}. This means we need to estimate the error term}
$$
E= \f(\w)- (-\Delta)^\frac{1}{2}\w,
$$
as defined in Section \ref{Sec:Prelim}. As a first step, we need the following  {lemma}:

\begin{lemma}\label{PU}
For any $\eta>0$, there exists a constant $C_\eta$ such that 
$$
\left|PU_{\delta,\xi} - U_{\de,\xi} + \log(2\delta) - 2\pi H(\xi,\cdot) \right| \le C_\eta \delta^2,
$$
for any $\de\in (0,1), \ \xi \in {(-1+\eta,1-\eta).}$ 
%In particular, in the interval $(\xi-\frac{\eta}{2},\xi+\frac{\eta}{2})$, we have that 
%$$
%PU_{\delta,\xi} = U_{\de,\xi} -\log (2\delta) + 2\pi H(\xi,\xi)+ O(|\cdot-\xi|) + O(\delta^2),
%$$
%uniformly with respect to {$\xi\in (-1+\eta,1-\eta)$} and $\de \in (0,1)$. Moreover,  we have 
%$$
%P U_{\de,\xi} = 2\pi G_{\xi} +O(\delta^2),
%$$
%in $I\setminus ({\xi}-\frac{\eta}{2},{\xi}+\frac{\eta}{2})$, uniformly with respect to {$\xi\in (-1+\eta,1-\eta)$} and $\de \in (0,1)$. 
In particular, we have that 
$$
PU_{\delta,\xi} = \begin{Si}{cl}
U_{\de,\xi} -\log (2\delta) + 2\pi H(\xi,\xi)+ O(|\cdot-\xi|) + O(\delta^2)\;  &  \text{ uniformly in } (\xi-\frac{\eta}{2},\xi+\frac{\eta}{2}),  \\
2\pi G_{\xi} +O(\delta^2) & \text{ uniformly in } I\setminus ({\xi}-\frac{\eta}{2},{\xi}+\frac{\eta}{2}),
\end{Si}
$$
independently by the choice of {$\xi\in (-1+\eta,1-\eta)$} and $\de \in (0,1)$.
\end{lemma}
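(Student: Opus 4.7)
\noindent The plan is to introduce the error function
\[ v := PU_{\delta,\xi} - U_{\delta,\xi} + \log(2\delta) - 2\pi H(\xi,\cdot), \]
interpreted on all of $\R$ via the natural extension $H(\xi,x) = G_\xi(x) - \frac{1}{\pi}\log\frac{1}{|x-\xi|}$, which reduces to $\frac{1}{\pi}\log|x-\xi|$ for $x\notin I$ since $G_\xi$ vanishes there. The first observation is that $v$ is $\fl$-harmonic in $I$: the projection definition gives $\fl PU_{\delta,\xi} = e^{U_{\delta,\xi}}$ in $I$, equation \eqref{Liouville} gives $\fl U_{\delta,\xi} = e^{U_{\delta,\xi}}$ on all of $\R$, and the distributional identities $\fl G_\xi = \delta_\xi$ and $\fl\bigl(\frac{1}{\pi}\log\frac{1}{|\cdot-\xi|}\bigr) = \delta_\xi$ in $I$ together yield $\fl H(\xi,\cdot) = 0$ in $I$. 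Hence $\fl v = 0$ in $I$.

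Next, I would compute $v$ explicitly on $\R\setminus I$, where $PU_{\delta,\xi}\equiv 0$ and $2\pi H(\xi,y)=2\log|y-\xi|$:
\[ v(y) = \log\!\bigl(\delta^2+(y-\xi)^2\bigr) - 2\log|y-\xi| = \log\!\Bigl(1+\tfrac{\delta^2}{(y-\xi)^2}\Bigr). \]
Since $\xi\in(-1+\eta,1-\eta)$ forces $|y-\xi|\ge\eta$ whenever $y\notin I$, the bound $\log(1+t)\le t$ yields $0\le v(y)\le \delta^2/\eta^2$ on $\R\setminus I$. Being bounded on $\R$ and $\fl$-harmonic in $I$, the function $v$ then satisfies, by the maximum principle for the half-Laplacian, $\|v\|_{L^\infty(\R)}\le \delta^2/\eta^2 \le C_\eta \delta^2$. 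This establishes the first assertion of the lemma.

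The two pointwise asymptotic expansions follow as immediate consequences of this uniform estimate. The explicit formula $H(\xi,x)=\frac{1}{\pi}\log\bigl(1-\xi x + \sqrt{(1-\xi^2)(1-x^2)}\bigr)$ shows that $H(\xi,\cdot)$ is $C^1$ with bounds uniform in $\xi\in(-1+\eta,1-\eta)$, so Taylor expanding in the inner region $|x-\xi|<\eta/2$ gives $H(\xi,x)=H(\xi,\xi)+O(|x-\xi|)$, hence $PU_{\delta,\xi}(x) = U_{\delta,\xi}(x)-\log(2\delta)+2\pi H(\xi,\xi)+O(|x-\xi|)+O(\delta^2)$. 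In the outer region $|x-\xi|\ge\eta/2$ with $x\in I$, the elementary identity $U_{\delta,\xi}(x) = \log(2\delta)-2\log|x-\xi|+O(\delta^2/\eta^2)$ combined with $2\pi G_\xi(x)= -2\log|x-\xi|+2\pi H(\xi,x)$ transforms the main bound into $PU_{\delta,\xi}(x)=2\pi G_\xi(x)+O(\delta^2)$.

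The main technical obstacle I foresee lies in the rigorous application of the maximum principle: one must verify that $v\in L^\infty(\R)\cap L_{1/2}(\R)$ and that $\fl H(\xi,\cdot)=0$ holds in $I$ in the global distributional sense required by the comparison argument, rather than merely in the local sense used to define the Green function. As a safety net, one can bypass the comparison principle by using the explicit Poisson representation $v(x)=\int_{\R\setminus I}P_I(x,y)v(y)\,dy$ for $x\in I$, with $P_I(x,y)=\frac{1}{\pi}\bigl(\frac{1-x^2}{y^2-1}\bigr)^{1/2}|x-y|^{-1}$, and deducing the $O(\delta^2)$ bound directly from the integrable decay $v(y)\lesssim \delta^2/(y-\xi)^2$ at infinity.
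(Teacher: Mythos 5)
Your proposal is correct and follows essentially the same route as the paper: you set up the same auxiliary function, observe it is $\fl$-harmonic in $I$, compute it explicitly on $\R\setminus I$ (where $|x-\xi|\ge\eta$ gives the $O(\de^2)$ bound), and conclude by the maximum principle, exactly as in the paper's argument (which cites Lemma 6 of \cite{Servadei_Valdinoci14} for this step). The extra details you supply — the Lipschitz expansion of $H(\xi,\cdot)$ for the inner region, the rewriting via the Robin function for the outer region, and the Poisson-kernel fallback — are consistent with, and slightly more explicit than, what the paper leaves implicit.
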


\begin{proof}
Let $u_{\de,\xi}:=PU_{\delta,\xi} - U_{\de,\xi} + \log(2\delta) - 2\pi H(\xi,\cdot)$. First, we observe that 
$$\fl u_{\de,\xi} =0\quad \text{in }I,$$
since $\fl H(\xi,\cdot) =0$ on $I$  and  $\fl PU_{\de,\xi} =\fl U_{\de,\xi} \text{ in } I$ (by definition of $PU_{\de,\xi}$). {Next, we study the values of $u_{\de,\xi}$ in $\R\setminus I$.} 
Here, since $PU_{\delta,\xi}\in X^\frac{1}{2}_0(I)$ we have $PU_{\delta,\xi}=0$. Then, recalling the expression of $U_{\de,\xi}$ given in \eqref{U_dt}, and noting that  $2\pi H(\xi,x)=2 \log {|x-\xi|}$ in $\R\setminus I$, we find that 
$$u_{\de,\xi}(x)=- U_{\de,\xi}(x) + \log(2\delta) -2\log {|x-\xi|} = \log (\de^2+|x-\xi|^2)-2\log |x-\xi|.$$
{Since $x\in \R\setminus I$ and $\xi\in (-1+\eta,1-\eta)$, we get $|x-\xi|\ge \eta$. Then, we} can find  $C_\eta$ s.t. $|u_{\de,\xi}|\le C_\eta \de^2$ in $\R\setminus I$. 
%In summary, our function $u_{\de,\xi}$ satisfies 
%\begin{equation}
%\begin{split}
%\left\{
%\begin{split}
%\fl u_{\de,\xi}=0  &\quad \text{ in }  I,\\
%|u_{\de,\xi}|\le C_\eta \de^2  &\quad \text{ in } \R\setminus I.
%\end{split}\right.
%\end{split}
%\end{equation}
By the Maximum principle (see Lemma 6 in \cite{Servadei_Valdinoci14}) we have the desired result.
\end{proof}

{Next,} we shall fix $\delta_1,\ldots, \delta_k$, in order to make the error term $E$ small near each of the points $\xi_1,\ldots,\xi_k$.   Note that, {for any $1\le i\le k$,} in the interval $(\xi_i -\frac{\eta}{2},\xi_{i}+\frac{\eta}{2})$ we have the uniform expansion  
\begin{equation}\label{ContoE}\begin{split}
E & = \lambda e^{\sum_{j=1}^k a_j P U_{\de_j,\xi_j}} - \lambda e^{-\sum_{j=1}^k a_j P U_{\de_j,\xi_j}} -\sum_{j=1}^k a_j e^{U_{\de_j,\xi_j}} \\
& = \frac{\lambda}{(2\de_ i)^{a_i}} e^{a_i  U_{\de_i,\xi_i} +2\pi a_i H(\xi_i,\xi_i) + 2\pi  \sum_{j\neq i} a_j G_{\xi_j}+\sum_{i=1}^kO(\de_i^2) + O(|\cdot -\xi_i|)} \\ &\quad - \lambda (2\de_ i)^{a_i} e^{-a_i  U_{\de_i,\xi_i} -2\pi a_i H(\xi_i,\xi_i) -2\pi  \sum_{j\neq i} a_j G_{\xi_j}+\sum_{i=1}^k O(\de_i^2)+O(|\cdot-\xi_i|)} - a_i e^{U_{\de_i,\xi_i}} -\sum_{j\neq i} a_j e^{U_{\de_j,\xi_j}}\\
& =  \frac{\lambda a_i}{2\de_i}  e^{U_{\de_i,\xi_i}} e^{2\pi  H(\xi_i,\xi_i) +2\pi a_i\sum_{j\neq i} a_j G_{\xi_j}+O(|\Vde|^2)+O(\cdot-\xi_i|)}+ O( \lambda \delta_i e^{-U_{\de_i,\xi_i}}) - a_i e^{U_{\de_i,\xi_i}} -\sum_{j\neq i} a_j e^{U_{\de_j,\xi_j}},
\end{split} 
\end{equation}
where we have used Lemma \ref{PU} and $a_i\in \{-1,1\}$, $i=1,\ldots,k$. 
Moreover, we have that 
\begin{equation}\label{-U}
\de_i e^{-U_{\de_i,\xi_i}{(x)}} = \frac{\de_i^2+|x-\xi_i|^2}{2} = O(\de_i^2) + O(|x-\xi_i|^2) 
\end{equation}
and, for $j\neq i$,  that 
$$
e^{U_{\de_j,\xi_j}(x)} =  \frac{2\de_j}{\de_j^2+|x-\xi_j|^2} = \frac{2\de_j}{|x-\xi_j|^2} + O(\de_j^3)  = \frac{2\de_j}{|\xi_i-\xi_j|^2} +O(\de_j|x-\xi_i|) +  O(\de_j^3) = O(\de_j).
$$

For $i\in\{1,\ldots,k\}$, let us consider the functions 
\begin{equation}\label{F_i}
F_i(\Vxi):= 2\pi  H(\xi_i,\xi_i) +2\pi a_i\sum_{j\neq i} a_j G_{\xi_j}(\xi_i),
\end{equation}
so that  estimate \eqref{ContoE} rewrites as 
\begin{equation}\label{FirstE}\begin{aligned}
E  & = a_i e^{U_{\de_i,\xi_i}} \left( \frac{\lambda }{2\de_i}  e^{F_i(\Vxi)+O(|\Vde|^2)+O(|\cdot-\xi_i|)} -1\right)+O(\lambda)+\sum_{j\neq i} O(\de_j).
\end{aligned}
\end{equation}

In order to make the main term of the {above expansion} small, we choose
\begin{equation}\label{delta}
\delta_i =\delta_i(\lambda,\Vxi):=  \frac{\lambda}{2} e^{F_i(\Vxi)}= \frac{\lambda}{2} e^{2\pi  H(\xi_i,\xi_i) +2\pi a_i \sum_{j\neq i} a_j G(\xi_i,\xi_j)}, \quad i =1,\ldots,k.
\end{equation}

\medskip
%\begin{prop}\label{E precise} Let $\Vdelta = (\delta_1,\ldots,\delta_k)$ be given by \eqref{delta}, {Then, for $i=1,\ldots,k$,  we have} 
%$$
%{E(x)}= a_i F_i'(\xi_i) e^{U_{\de_i,\xi_i}}(x-\xi_i)+  {O\left(e^{U_{\de_i,\xi_i}(x)} (|x-\xi_i|^2+\lambda^2 )\right), } \; 
%$$
%for  $x\in (\xi_i-\frac{\eta}{2},\xi_i+\frac{\eta}{2})$, {uniformly with respect to the choice of $\lambda\in(0,1)$ and $\Vxi \in \mathcal P_{k,\eta}$.}
%\end{prop}
%\begin{proof}
%Thanks to \eqref{delta}, we have $\delta_i=O(\lambda)$, $i=1,\ldots,k$, uniformly for $\Vxi\in \mathcal P_{k,\eta}$. Thus \eqref{FirstE} yields
%{
%$$\begin{aligned}
%E (x) & = a_i e^{U_{\de_i,\xi_i}(x)} \left( e^{F_i(x)-F_i(\xi_i)+O(|\Vde|^2)} -1\right)+O(\lambda)\\
%&   = a_i e^{U_{\de_i,\xi_i}(x)} F'_i(\xi_i)(x-\xi_i) + a_i e^{U_{\de_i,\xi_i}} (O(|x-\xi_i|^2)+O(\lambda^2))+ O(\lambda),
%\end{aligned}
%$$
%for any $x\in  (\xi_i-\frac{\eta}{2},\xi_i+\frac{\eta}{2})$.} Estimating $e^{-U_{\de_i,\xi_i}}$ as in \eqref{-U}, we observe that 
%\begin{equation}\label{EstLambda}
%O(\lambda) ={ e^{U_{\de_i,\xi_i}(x)}  O(\lambda e^{- U_{\de_i,\xi_i}(x)}) = e^{U_{\de_i,\xi_i}(x)}} \left(O(\lambda^2)+O(|x-\xi|^2)\right), 
%\end{equation}
%and the conclusion follows.
%\end{proof}
%
%The precise expansion of Proposition \ref{E precise} { will only be used, in Proposition \ref{main_prop}, to choose the location of the points $\xi_1,\ldots,\xi_n$.} In the rest of the paper, it will be sufficient to know the following weaker integral estimate. 

With this choice, we get the following integral estimate on $E$.

\begin{lemma}\label{Est_E} { Let $\Vdelta = (\delta_1,\ldots,\delta_k)$ be as in \eqref{delta}.} For any $p\in (1,\infty)$ one has
$$
\|E\|_{L^p(I)} = O(\lambda^\frac{1}{p}), 
$$
{uniformly with respect to the choice of $\lambda\in (0,1)$ and $\Vxi =(\xi_1,\ldots,\xi_k)\in \mathcal P_{k,\eta}$. }
\end{lemma}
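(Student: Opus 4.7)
The plan is to split $I$ into the union of the $k$ concentration windows $A_i := (\xi_i - \tfrac{\eta}{2}, \xi_i + \tfrac{\eta}{2})$, $i = 1,\ldots,k$, and the remaining ``far'' region $B := I \setminus \bigcup_{i=1}^k A_i$, then estimate $\|E\|_{L^p}$ on each piece. On $A_i$ we exploit the expansion \eqref{FirstE}, while on $B$ we use the second line of Lemma \ref{PU} to see that $\omega$ is uniformly bounded.

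First I would work on $A_i$. By the choice \eqref{delta} we have $\frac{\lambda}{2\delta_i}e^{F_i(\Vxi)}=1$, and since $\delta_j = O(\lambda)$ uniformly in $\Vxi \in \mathcal{P}_{k,\eta}$ (because $F_i$ is continuous and bounded on $\mathcal{P}_{k,\eta}$), the bracket in \eqref{FirstE} becomes $e^{O(\lambda^2) + O(|x-\xi_i|)} - 1$. For $x \in A_i$ and $\lambda$ small, the exponent is bounded, so the inequality $|e^z - 1| \le C|z|$ gives
$$
|E(x)| \le C\, e^{U_{\delta_i,\xi_i}(x)} \bigl(\lambda^2 + |x - \xi_i|\bigr) + C\lambda \quad \text{on } A_i,
$$
uniformly in $\Vxi$. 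The two contributions are then controlled by the rescaling $y = (x-\xi_i)/\delta_i$: using $e^{U_{\delta_i,\xi_i}(x)} = \frac{2\delta_i}{\delta_i^2+|x-\xi_i|^2}$, one gets
$$
\int_{A_i} e^{p U_{\delta_i,\xi_i}} |x-\xi_i|^p\, dx = \delta_i \int_{|y|\le \eta/(2\delta_i)} \frac{2^p |y|^p}{(1+y^2)^p}\, dy = O(\delta_i) = O(\lambda),
$$
where the integral in $y$ converges for $p > 1$, and similarly $\lambda^{2p}\int_{A_i} e^{p U_{\delta_i,\xi_i}} dx = O(\lambda^{2p}\delta_i^{1-p}) = O(\lambda^{p+1})$. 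Since $\lambda\in (0,1)$, both the $O(\lambda)$ remainder and these two pieces raised to $1/p$ are $O(\lambda^{1/p})$.

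For $x \in B$, by the second branch of Lemma \ref{PU} we have $P U_{\delta_j,\xi_j}(x) = 2\pi G_{\xi_j}(x) + O(\delta_j^2)$ uniformly in $B$ and in $\Vxi \in \mathcal{P}_{k,\eta}$, and $G_{\xi_j}$ stays bounded on $B$ because $|x-\xi_j| \ge \eta/2$ there. Hence $\omega = O(1)$ on $B$, so $\lambda(e^\omega - e^{-\omega}) = O(\lambda)$. On the other hand $e^{U_{\delta_j,\xi_j}(x)} \le \frac{2\delta_j}{(\eta/2)^2} = O(\lambda)$, giving $(-\Delta)^{1/2}\omega = \sum_j a_j e^{U_{\delta_j,\xi_j}} = O(\lambda)$ on $B$. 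Therefore $|E(x)| = O(\lambda)$ uniformly on $B$ and $\|E\|_{L^p(B)} = O(\lambda) = O(\lambda^{1/p})$.

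Collecting the estimates on $A_i$ and on $B$ yields $\|E\|_{L^p(I)} = O(\lambda^{1/p})$. The only mildly delicate point is to verify that all constants in the expansion \eqref{FirstE} and in the application of $|e^z - 1|\le C|z|$ are uniform with respect to $\Vxi \in \mathcal P_{k,\eta}$; this is immediate from the continuity and boundedness of $H(\xi_i,\xi_i)$ and of $G_{\xi_j}(\xi_i)$ on $\mathcal P_{k,\eta}$, which also guarantees $\delta_j = O(\lambda)$ uniformly and hence that the ``$O$'' terms in \eqref{FirstE} are genuinely uniform.
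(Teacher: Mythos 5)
Your proof is correct and follows essentially the same route as the paper: split $I$ into the concentration windows and the far region, use the expansion \eqref{FirstE} together with the choice \eqref{delta} of $\delta_i$ on each window, rescale by $y=(x-\xi_i)/\delta_i$ to get the $O(\delta_i)=O(\lambda)$ bound for $p>1$, and bound $E=O(\lambda)$ away from the points via Lemma \ref{PU}. The only (harmless) deviation is that you estimate the $e^{U_{\delta_i,\xi_i}}\lambda^2$ contribution directly in $L^p$ instead of absorbing it into the $O(\lambda)$ remainder via $e^{U_{\delta_i,\xi_i}}\le 2/\delta_i$, as the paper does.
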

\begin{proof}
Thanks to \eqref{delta}, we have $\delta_i=O(\lambda)$, $i=1,\ldots,k$ uniformly for $\Vxi\in \mathcal P_{k,\eta}$. Then,   \eqref{FirstE} and \eqref{delta}  yield 
\[
\begin{aligned}
E(x) &= a_i e^{U_{\de_i,\xi_i}}\left(O(\lambda^2)+O(|x-\xi_i|)\right) + O(\lambda)\\ & = O\left(e^{U_{\de_i,\xi_i}} |x-\xi_i| \right) + O(\lambda) ,
\end{aligned}
\]
uniformly in $(\xi_i-\frac{\eta}{2},\xi_i+\frac{\eta}{2})$, $i=1,\ldots,k$, where the last equality follows by 
$e^{U_{\de_i,\xi_i}}\le \frac{2}{\de_i}=O(\lambda^{-1}).$ 
Moreover, using Lemma \ref{PU}, we get 
$$
E = \lambda e^{2\pi\sum_{j=1}^k a_j G_{\xi_j} + O(\lambda^2)}- \lambda e^{-2\pi\sum_{j=1}^k a_j G_{\xi_j} + O(\lambda^2)} - \sum_{j=1}^k a_j e^{U_{\de_j,\xi_j}}  = O(\lambda),
$$
uniformly in ${I}\setminus \cup_{i=1}^k (\xi_i - \frac{\eta}{2},\xi_i+\frac{\eta}{2})$.
Using these estimates, we can assert that 
\begin{equation*}
\begin{split}
\|E\|^p_{L^p(I)}=%\int_I E^p\,dx=
& \sum^k_{i=1}\int^{\xi_i +\frac{\eta}{2}}_{\xi_i-\frac{\eta}{2}} \left( O(e^{U_{\de_i,\xi_i}}  |x-\xi_i|)  +O(\lambda)\right)^p\,dx+\int_{I\setminus \cup_{i=1}^k (\xi_i-\frac{\eta}{2},\xi_i +\frac{\eta}{2})}O(\lambda)^p\,dx\\
=&  \sum^k_{i=1}\int^{\xi_i +\frac{\eta}{2}}_{\xi_i-\frac{\eta}{2}} \left( O(e^{U_{\de_i,\xi_i}}  |x-\xi_i|) \right)^p\,dx  + O(\lambda)^p.
\end{split}
\end{equation*}{
Since $p>1$, with the change of variable $y=\frac{|x-\xi_i|}{\delta_i}$, we find 
$$
\begin{aligned}
\int^{\xi_i +\frac{\eta}{2}}_{\xi_i-\frac{\eta}{2}} \left( e^{U_{\de_i,\xi_i}}  |x-\xi_i| \right)^p dx =\int^{\xi_i +\frac{\eta}{2}}_{\xi_i-\frac{\eta}{2}} \left( \frac{2\delta_i|x-\xi_i|}{\delta_i^2+ |x-\xi_i|^2}  \right)^p\,dx = \de_i \int^{\frac{\eta}{2\delta_i}}_{-\frac{\eta}{2\delta_i}} \left( \frac{2 y}{1+ y^2}  \right)^p dy =O(\de_i) =O(\lambda),
\end{aligned}
$$
for  $i=1,\ldots,k$. We can so conclude that 
$$
\|E\|^p_{L^p(I)} = O(\lambda) + O(\lambda^p)=O(\lambda), \quad{ \forall \ p>1}
.$$}
\end{proof}

\begin{rem}
Using the change of variable of the proof above, one can easily verify that, for any $p,q\ge 0$, the following useful estimate holds {as $\delta\to 0$, uniformly with respect to $\xi\in \R$:}
\begin{equation}\label{gen Est}
\int_{\xi-\frac{\eta}{2}}^{\xi+\frac{\eta}{2}} e^{p\, U_{\de,\xi}}|x-\xi|^q dx =  \begin{Si}{cc}
O(\de^{q-p+1}) & \text{ if } 2p-q>1,\\
O(\de^p |\log \de|) & \text{ if } 2p-q =1,\\
O(\de^{p})  & \text{ if } 2p-q<1.
\end{Si}
\end{equation}
\end{rem}

\begin{rem}
For $p=1$, the argument of Lemma \ref{Est_E}  gives 
\[
\|E\|_{L^1(I)} = O(\lambda|\log \lambda|).
\]
\end{rem}

\subsection{Estimates on the non-linear error term}\label{Sec:N}
In this section we look for estimates on the non-linear error term 
$$
N(\ph) = \f(\w+\ph)-\f(\w ) - \f'(\w)\ph,
$$
as defined in Section \ref{Sec:Prelim}. The following lemma shows that $N$ depends quadratically on $\ph$.

\begin{lemma}\label{est_L} Let  $\Vdelta = (\delta_1,\ldots,\delta_k)$ be as in \eqref{delta}.  For any $p\geq1$ and $s>p$, there exists a constant $C_{p,s,\eta}>0$, depending only on $p$, $s$ and $\eta$, such that 
$$
\|N(\ph_1)-N(\ph_2)\|_{L^p(I)} \le C_{p,s,\eta} \lambda^{\frac{1}{s}-1}  \|\ph_1-\ph_2\|(\|\ph_1\|+\|\ph_2\|),
$$
for any {$\lambda\in (0,1)$, $\Vxi\in \mathcal P_{k,\eta}$} and $\ph_1,\ph_2 \in X^\frac{1}{2}_0(I)$ satisfying $\|\ph_1\|,\|\ph_2\|\le 1$.
\end{lemma}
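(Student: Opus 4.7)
The plan is to derive a pointwise bound on $N(\ph_1)-N(\ph_2)$ expressing the quadratic dependence on $\ph_1,\ph_2$, and then apply H\"older's inequality together with a Moser--Trudinger-type inequality for $X^\frac{1}{2}_0(I)$ and the Sobolev embedding $X^\frac{1}{2}_0(I)\hookrightarrow L^q(I)$ valid for every finite $q$. Since $f_\lambda(u)=\lambda(e^u-e^{-u})$ satisfies $f_\lambda''=f_\lambda$, setting $\psi_t:=\ph_2+t(\ph_1-\ph_2)$ and applying the fundamental theorem of calculus twice gives
\[
N(\ph_1)-N(\ph_2)\;=\;(\ph_1-\ph_2)\int_0^1\!\!\int_0^1 f_\lambda''(\w+s\psi_t)\,\psi_t\,ds\,dt,
\]
and the elementary bound $|f_\lambda''(u)|\le\lambda(e^{u}+e^{-u})$ together with $|\psi_t|\le|\ph_1|+|\ph_2|$ yields
\[
|N(\ph_1)-N(\ph_2)|\;\le\; C\lambda\,\bigl(e^{\w}+e^{-\w}\bigr)\,e^{|\ph_1|+|\ph_2|}\,|\ph_1-\ph_2|\,(|\ph_1|+|\ph_2|).
\]

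Next, I would split the right-hand side via H\"older's inequality with four exponents $r,q_1,q_2,q_3\in(p,\infty)$ satisfying $\frac{1}{p}=\frac{1}{r}+\frac{1}{q_1}+\frac{1}{q_2}+\frac{1}{q_3}$, and estimate each factor separately. The main factor $\|e^{\w}+e^{-\w}\|_{L^r(I)}$ is handled via Lemma \ref{PU}: on each interval $(\xi_i-\frac{\eta}{2},\xi_i+\frac{\eta}{2})$ we have $\w=a_iU_{\delta_i,\xi_i}+O(1)$, while $\w$ is uniformly bounded in the complement. Combining \eqref{gen Est} with the elementary estimate $\int_{|y|<\frac{\eta}{2}}e^{-rU_{\delta,\xi}}\,dy=O(\delta^{-r})$ then yields $\|e^{\pm\w}\|_{L^r(I)}=O(\lambda^{-1})$, since $\delta_i=O(\lambda)$ by \eqref{delta}. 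The factor $\|e^{|\ph_1|+|\ph_2|}\|_{L^{q_1}(I)}$ is uniformly bounded when $\|\ph_i\|\le 1$, which follows from the fractional Moser--Trudinger inequality ensuring $\int_I e^{\alpha u^2}\,dx\le C$ for $\|u\|\le 1$ and small $\alpha>0$, combined with the pointwise trick $q|u|\le\alpha u^2+\frac{q^2}{4\alpha}$. Finally, $\|\ph_1-\ph_2\|_{L^{q_2}(I)}$ and $\||\ph_1|+|\ph_2|\|_{L^{q_3}(I)}$ are controlled by $\|\ph_1-\ph_2\|$ and $\|\ph_1\|+\|\ph_2\|$ through the $1$-dimensional Sobolev embedding $X^\frac{1}{2}_0(I)\hookrightarrow L^q(I)$ for every $q<\infty$.

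Multiplying all of these estimates produces
\[
\|N(\ph_1)-N(\ph_2)\|_{L^p(I)}\;\le\; C_{p,s,\eta}\,\|\ph_1-\ph_2\|\,(\|\ph_1\|+\|\ph_2\|),
\]
which in particular implies the stated inequality since $\lambda^{1/s-1}\ge 1$ for $\lambda\in(0,1)$ and $s>1$; the hypothesis $s>p$ serves only to leave room for the H\"older exponents $r,q_1,q_2,q_3$ to be strictly larger than $p$. I expect the main technical obstacle to lie in the analysis of $\|e^{\pm\w}\|_{L^r(I)}$: one must localize around each $\xi_i$ and split the interval further into the near-field $|x-\xi_i|<\sqrt{\delta_i}$ and the far-field $|x-\xi_i|\ge\sqrt{\delta_i}$, in order to isolate the dominant contribution from $e^{-a_iU_{\delta_i,\xi_i}}$ in the far-field region, which ultimately drives the $\delta_i^{-1}$ scaling responsible for the factor $\lambda^{-1}$ that the prefactor $\lambda$ in $N$ must absorb.
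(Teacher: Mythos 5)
Your overall scheme (second-order Taylor expansion using $\f''=\f$, four-exponent H\"older, Moser--Trudinger for $e^{|\ph_1|+|\ph_2|}$, Sobolev embedding for the remaining factors) is the same as the paper's, but the key estimate on the bubble factor is wrong, and the stronger, $\lambda$-independent conclusion you draw from it is false. Near $\xi_i$, Lemma \ref{PU} gives $\w=a_i\bigl(U_{\de_i,\xi_i}-\log(2\de_i)\bigr)+O(1)$, not $\w=a_iU_{\de_i,\xi_i}+O(1)$: you dropped the term $-a_i\log(2\de_i)$, which is of size $|\log\lambda|$ and hence not $O(1)$. With the correct expansion, on the peak where the sign is aligned one has $e^{|\w|}\sim\de_i^{-1}e^{U_{\de_i,\xi_i}}$, so $\|e^{|\w|}\|_{L^r(I)}=O\bigl(\de_i^{\frac1r-2}\bigr)=O\bigl(\lambda^{\frac1r-2}\bigr)$ rather than $O(\lambda^{-1})$; consequently $\lambda\|e^{|\w|}\|_{L^r(I)}=O\bigl(\lambda^{\frac1r-1}\bigr)$, which is precisely where the factor $\lambda^{\frac1s-1}$ in the statement comes from (and why $s>p$ is needed: the bubble factor is measured in $L^s$, and the three remaining exponents must fit into $\frac1p-\frac1s$). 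The loss is not just a constant: the bound $\|N(\ph_1)-N(\ph_2)\|_{L^p(I)}\le C\|\ph_1-\ph_2\|(\|\ph_1\|+\|\ph_2\|)$ uniform in $\lambda$ that you claim is false. Take $k=1$, $a_1=1$, $\ph_2=0$, $\ph_1=\eps\psi$ with $\psi$ a fixed bump centered at $\xi_1$ and $\eps$ small: then $N(\ph_1)\approx\tfrac12\f(\w)\ph_1^2$, and since $\lambda e^{\w}\sim\frac{\lambda}{2\de_1}e^{U_{\de_1,\xi_1}}\sim e^{U_{\de_1,\xi_1}}$ near $\xi_1$, one gets $\|N(\ph_1)\|_{L^p(I)}\gtrsim\eps^2\|e^{U_{\de_1,\xi_1}}\|_{L^p(I)}\sim\eps^2\lambda^{\frac1p-1}$, which blows up relative to $\|\ph_1\|^2$ as $\lambda\to0$ for any $p>1$.

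The repair is exactly the paper's bookkeeping: keep the prefactor $\lambda$ attached to the exponential of the ansatz and use that, by the choice \eqref{delta}, $\frac{\lambda}{2\de_i}=e^{-F_i(\Vxi)}=O(1)$ uniformly for $\Vxi\in\mathcal P_{k,\eta}$, so that $\lambda e^{|\w|}=O\bigl(e^{U_{\de_i,\xi_i}}\bigr)$ on $(\xi_i-\frac\eta2,\xi_i+\frac\eta2)$ and $O(\lambda)$ elsewhere; then \eqref{gen Est} yields $\|\lambda e^{|\w|}\|_{L^s(I)}=O\bigl(\lambda^{\frac1s-1}\bigr)$, and the rest of your argument (Moser--Trudinger and Sobolev factors, which are fine as written) goes through and produces exactly the stated estimate, with the factor $\lambda^{\frac1s-1}$ that cannot be removed.
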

\begin{proof}
First of all, we observe that for any $x\in I$, there exist ${t_1=}t_1(x),\ {t_2=}t_2(x)\in [0,1]$ such that 
$$\begin{aligned}
N(\ph_1)-N(\ph_2)  & = \f(\w+\ph_1) - \f(\w+\ph_2) - \f'(\w) \ph_1 + \f'(\w) \ph_2  \\
&=  \f'(\w +t_1  \ph_1 + (1-t_1)\ph_2) (\ph_1-\ph_2) - \f'(\w) (\ph_1-\ph_2)   \\
& = \f''(\w +t_2 t_1 \ph_1 +t_2 (1-t_1)\ph_2 )(\ph_1-\ph_2)(t_1 \ph_1+(1-t_1) \ph_2).
\end{aligned}
$$
Denoting {$\tau_1 = t_1 t_2\in[0,1]$} and {$\tau_2= t_2(1-t_1)\in[0,1]$}, we get that 
$$
\begin{aligned}
|N(\ph_1)-N(\ph_2)|& \le |\f''(\w+{\tau_1}\ph_1+{\tau_2}\ph_2)||\ph_1-\ph_2||t_1\ph_1+(1-t_1)\ph_2|\\
&\le |\f''(\w+ {\tau_1} \ph_1+{\tau_2}\ph_2)||\ph_1-\ph_2|(|\ph_1|+|\ph_2|). 
\end{aligned}
$$
Noting that $\f'' = \f$ and that $|\f(t)|\le 2 \lambda e^{|t|}$, we get 
\begin{equation}\begin{aligned}\label{NDiff}
|N(\ph_1)-N(\ph_2)| \le 2\lambda e^{|\w|+|\ph_1|+|\ph_2|} |\ph_1-\ph_2| (|\ph_1|+|\ph_2|)
& \le 2 \lambda e^{|\w| + {\ph_3}} |\ph_1-\ph_2|\ph_3,
\end{aligned}
\end{equation}
where $\ph_3 = |\ph_1|+|\ph_2|$. 
{Additionally, for any choice of $s>p\ge1$, we can find  $s_1,s_2,s_3> 1$ such that $\frac{1}{s_1}+\frac{1}{s_2}+\frac{1}{s_3}+\frac{1}{s}= \frac{1}{p}$. Then, H\"older's inequality implies that 
\begin{equation}\label{multiHolder}
\|\lambda e^{|\w|} e^{|\ph_3|} |\ph_1-\ph_2| \ph_3 \|_{L^p(I)} \le \|\lambda e^{\w}\|_{L^s(I)} \|e^{\ph_3}\|_{L^{s_1}(I)}  \|\ph_1-\ph_2\|_{L^{s_2}(I)} \|\ph_3\|_{L^{s_3}(I)}.
\end{equation}
Now, using Lemma \ref{PU}, we see that $\lambda e^{|\w|} = O(\lambda)$ in $I \setminus \cup_{i=1}^k(\xi_i-\frac{\eta}{2},\xi_i+\frac{\eta}{2})$, and that 
$$
\lambda e^{|\w|} = \lambda e^{PU_{\de_i,\xi}+O(1)} = O(e^{U_{\de_i,\xi_i}}) 
$$
in $(\xi_i-\frac{\eta}{2},\xi_i+\frac{\eta}{2})$ for $i=1,\ldots,k$. Therefore 
\begin{equation}\label{Eq s}
\| \lambda e^{|\w|} \|_{L^{s}(I)}^{s} =  \sum_{i=1}^k \int_{\xi_i-\frac{\eta}{2}}^{\xi_i+\frac{\eta}{2}} O\left(e^{s U_{\de_i,\xi_i}} \right)dx  +O(\lambda^s) = O(\lambda^{1-s})+ O(\lambda^s) = O(\lambda^{1-s}),  
\end{equation}
where we used \eqref{gen Est} and  $1-s<s$. Note that the quantity $O(\lambda^{1-s})$ depends on $\eta$ and $s$. 

Using the Moser-Trudinger inequality (see \cite{MartinazziMT}), we get that 
\begin{equation}\label{Eq s1}
\int_{\R} e^{s_1 |\ph_3|} dx \le e^{\frac{s_1^2}{4\pi}\|\ph_3\|^2} \int_I e^{\pi \frac{\ph_3^2}{\|\ph_3\|^2}} dx \le C e^{\frac{s_1^2}{4\pi} \|\ph_3\|^2}\le C(s_1).  
\end{equation}
Finally, thanks to Sobolev's inequality, we have the estimates
\begin{equation}\label{Eq s2s3}\|\ph_1-\ph_2\|_{L^{s_2}(I)} \le C(s_2) \|\ph_1-\ph_2\|\quad \text{ and }\quad \|\ph_3\|_{L^{s_3}(I)} \le C(s_3) \|\ph_3\| \le C(s_3) (\|\ph_1\| +\|\ph_2\|).
\end{equation}
Thus, replacing  \eqref{multiHolder}-\eqref{Eq s2s3} into  \eqref{NDiff}, we obtain
$$
\|N(\ph_1)-N(\ph_2)\|_{L^{p}(I)} \le C \lambda^{\frac{1-s}{s}} \|\ph_1-\ph_2\| ( \|\ph_1\|+\|\ph_2\| ),
$$
with $C$ depending only on $\eta, s, s_1,s_2$ and $s_3$. Since the choice of $s_1,s_2$ and $s_3$ depends only on $s$ and $p$, we get the conclusion. }
\end{proof}

\begin{rem}\label{similar}
Repeating the argument of the above proof, we can show that, for any $s,s_1>p\ge 1$ such that $\frac{1}{s_1}+\frac{1}{s}<\frac{1}{p}$, there exists a constant $C=C(p,s,s_1,\eta)$ such that 
$$
\|\f(\w +\ph)-\f(\w)\|_{L^p(I)}  + \|\f'(\w +\ph)-\f'(\w)\|_{L^p(I)} \le C  \lambda^{\frac{1-s}{s}} e^{\frac{s_1^2}{4\pi}\|\ph\|^2} \|\ph\|,
$$
%and 
%$$
%\|\f'(\w +\ph_1)-\f'(\w +\ph_2)\|_{L^p(I)}  \le C  \lambda^{\frac{1-s}{s}} e^{\frac{s_1^2}{4\pi}\||\ph_1|+|\ph_2|\|} \|\ph_1-\ph_2\|,
%$$
for any $\ph \in X_0^\frac{1}{2}(I)$, $\Vxi \in \mathcal P_{k,\eta}$ and $\lambda\in (0,1)$.   
{Note that whenever $s>p$, it is possible to choose $s_1>p$ large enough so that $\frac{1}{s_1}+\frac{1}{s} <\frac{1}{p}$.}
\end{rem}

\section{Properties of the linearized operator}\label{Sec:Lin}
This section is devoted to the study of the linear operator $ L:X_0^\frac{1}{2}(I) \to X_0^\frac{1}{2}(I)$, defined as
$$
 L \ph =  \ph - (-\Delta)^{-\frac{1}{2}} \f'(\w)\ph.
$$ In particular, we are interested in exhibiting an approximate kernel  of $ L$.  As a first step we describe the behavior of the term $\f'(\w)$.

\begin{lemma}\label{ExpDer}
For any $i= 1,\ldots,k$, we have the expansion 
$$
\f'(\w ) = \begin{Si}{cl} 
e^{U_{\de_i,\xi_i}} (1+O(|\cdot-\xi_i|) +O(\lambda^2)) & \text{ uniformly in }  (\xi_i-\frac{\eta}{2},\xi_i + \frac{\eta}{2}),\\
O(\lambda)  &  \text{ uniformly in } I \setminus \cup_{i=1}^k (\xi_i-\frac{\eta}{2},\xi_i + \frac{\eta}{2}).
\end{Si}
$$ 
In particular, $\|\f'(\w)\|_{L^1(I)}=O(1)$.  
\end{lemma}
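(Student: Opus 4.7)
The plan is to derive a local expansion of $\w$ in a neighborhood of each $\xi_i$ by invoking Lemma \ref{PU}, and then to exponentiate in order to control $\f'(\w)=\lambda(e^{\w}+e^{-\w})$. The decisive feature that drives the whole estimate is that $\delta_i$ was chosen in \eqref{delta} precisely so as to cancel the constants produced by the projection and by the interaction between distinct bubbles.

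Concretely, for a fixed $i\in\{1,\ldots,k\}$, I would apply the close-up alternative of Lemma \ref{PU} to the term $a_i PU_{\delta_i,\xi_i}$, and the far-field alternative to $a_j PU_{\delta_j,\xi_j}$ for every $j\neq i$, together with a first-order Taylor expansion $G_{\xi_j}(x)=G_{\xi_j}(\xi_i)+O(|x-\xi_i|)$ (valid because $G_{\xi_j}$ is smooth at $\xi_i$ for $j\neq i$). Using $a_i^2=1$, the sum collapses to
\[
\w(x)=a_i U_{\delta_i,\xi_i}(x)-a_i\log(2\delta_i)+a_i F_i(\Vxi)+O(|x-\xi_i|)+O(\lambda^2),
\]
uniformly in $(\xi_i-\frac{\eta}{2},\xi_i+\frac{\eta}{2})$, where $F_i(\Vxi)$ is the quantity appearing in \eqref{F_i}. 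Substituting $\delta_i=\frac{\lambda}{2}e^{F_i(\Vxi)}$ reduces the constants to $-a_i\log\lambda$, and since $a_i\in\{-1,1\}$, multiplying by $a_i$ and exponentiating yields $\lambda e^{a_i\w}=e^{U_{\delta_i,\xi_i}}\bigl(1+O(|x-\xi_i|)+O(\lambda^2)\bigr)$. The opposite exponential satisfies $\lambda e^{-a_i\w}=\lambda^2 e^{-U_{\delta_i,\xi_i}}(1+O(|x-\xi_i|)+O(\lambda^2))$; using the explicit formula $e^{-U_{\delta_i,\xi_i}}=\frac{\delta_i^2+|x-\xi_i|^2}{2\delta_i}$ together with $\delta_i=O(\lambda)$, its ratio to $e^{U_{\delta_i,\xi_i}}$ is $O(\lambda^4+|x-\xi_i|^4)$, which can be absorbed by the existing error. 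Summing the two contributions gives the first case of the statement.

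For $x$ outside $\bigcup_{i=1}^k(\xi_i-\frac{\eta}{2},\xi_i+\frac{\eta}{2})$, the far-field alternative of Lemma \ref{PU} applied to every bubble gives $\w=2\pi\sum_{j=1}^k a_j G_{\xi_j}+O(\lambda^2)$, which is uniformly bounded there because each $|x-\xi_j|\ge\frac{\eta}{2}$; hence $\f'(\w)=O(\lambda)$. The $L^1$ bound then follows by integrating: the leading piece contributes $\int_{\xi_i-\frac{\eta}{2}}^{\xi_i+\frac{\eta}{2}}e^{U_{\delta_i,\xi_i}}\,dx=O(1)$ via the change of variable $y=(x-\xi_i)/\delta_i$, the first-order correction is $O(\delta_i|\log\delta_i|)=O(\lambda|\log\lambda|)$ by \eqref{gen Est} with $p=q=1$, and the exterior region contributes $O(\lambda)$.

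The main obstacle I anticipate is the algebraic bookkeeping in the first step: one has to expand $\w$ carefully and verify that the leftover constants after using Lemma \ref{PU} reproduce exactly $a_i F_i(\Vxi)$, so that the specific choice \eqref{delta} reduces $\w$ to $a_iU_{\delta_i,\xi_i}-a_i\log\lambda$ modulo admissible errors. Once this reduction is in place, the pointwise bound is a one-line exponential manipulation and the integral estimate is a routine bubble-mass computation.
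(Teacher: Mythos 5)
Your proof is correct and follows essentially the same route as the paper: expand $\w$ near $\xi_i$ via Lemma \ref{PU} (as in \eqref{ContoE}), use the choice \eqref{delta} to cancel the constant $F_i(\Vxi)$, absorb the opposite exponential through the explicit formula \eqref{-U}, and bound $\w=O(1)$ away from the peaks; the $L^1$ estimate via \eqref{gen Est} is the same routine bubble-mass computation the paper relies on.
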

\begin{proof}
Indeed, arguing as in \eqref{ContoE}, we get 
\[\begin{split}
\f'(\w)
& = \frac{\lambda}{(2\de_ i)^{a_i}} e^{a_i  U_{\de_i,\xi_i} +2\pi a_i H(\xi_i,\cdot) + 2\pi  \sum_{j\neq i} a_j G_{\xi_j}+O(\lambda^2)}  + \lambda (2\de_ i)^{a_i} e^{-a_i  U_{\de_i,\xi_i} -2\pi a_i H(\xi_i,\cdot) -2\pi  \sum_{j\neq i} a_j G_{\xi_j}+O(\lambda^2)}\\
& = \frac{\lambda}{2\de_ i} e^{  U_{\de_i,\xi_i} +{F_i(\Vxi)} +O({|\cdot-\xi_i|}) +O(\lambda^2)} + O(\lambda \delta_i e^{-U_{\de_i,\xi_i}})\\
& = e^{U_{\de_i,\xi_i}+O({|\cdot-\xi_i|})+O(\lambda^2)} + O(\lambda)\\
& = e^{U_{\de_i,\xi}} (1+O({|\cdot-\xi_i|})+O(\lambda^2)) + O(\lambda) \\
& {=e^{U_{\de_i,\xi_i}} (1+O({|\cdot-\xi_i|}) +O(\lambda^2)),}
\end{split}\]
in $(\xi_i-\frac{\eta}{2},\xi_i + \frac{\eta}{2})$, { where in the last equality we used \eqref{-U} to estimate $O(\lambda)$ as  
\[
O(\lambda) ={ e^{U_{\de_i,\xi_i}(x)}  O(\lambda e^{- U_{\de_i,\xi_i}(x)}) = e^{U_{\de_i,\xi_i}(x)}} \left(O(\lambda^2)+O(|x-\xi|^2)\right).
\]
Moreover,} $\f'(\w)=O(\lambda)$ in $I \setminus \cup_{i=1}^k (\xi_i-\frac{\eta}{2},\xi_i + \frac{\eta}{2})$, since on this set we have
$$
\w = 2\pi \sum_{j=1}^k a_j  G_{\xi_j} + O(\lambda^2) = O(1) . 
$$
\end{proof}

Next, we focus on the kernel of $ L$. Observe that if $\ph\in X_0^\frac{1}{2}(I)$ and ${ L\ph} =0$ then, the scaled functions $\Phi_i(x):= \ph({ \xi_i+}\de_ix)$ are weak solutions to
$$
(-\Delta)^\frac{1}{2}\Phi_i + \de_i \f'(\w({\xi_i}+\de_i \cdot))=0,\quad i=1,\ldots,k, 
$$ 
in the expanding intervals $(\frac{-\xi_i-1}{\delta_i},\frac{1-\xi_i}{\delta_i})$. According to Lemma \ref{ExpDer}, for any fixed $y\in \R$, we have  
$$
\de_i \f'(\w({\xi_i}+\de_i y)) \sim \de_i e^{U_{\de,\xi_i}(\de_i y)}  = \frac{2}{1+y^2}. 
$$
Then, $\Phi_i$ should behave locally as a solution of the problem 
\begin{equation}\label{LiouvilleScaled}
(-\Delta)^\frac{1}{2}\Phi = \frac{2\Phi}{1+|\cdot|^2}   \quad \text{in } \R. 
\end{equation}
This equation was studied by many authors. In particular Santra \cite[Theorem 1.4]{Santra} (see also \cite{DDS}) proved that the only bounded solutions to  \eqref{Liouville} are {linear combinations of} the functions 
\begin{equation}\label{ZsuR}
Z_{0}(y):= \frac{1-y^2}{1+y^2} \quad \text{ and } \quad Z_{1}(y):= \frac{2 y}{1+y^2}. 
\end{equation}

Here we will need a small modification of this classification result.  Let us consider the spaces
\begin{equation}\label{spaces}
\begin{aligned}
\mathcal L :=\{u \in L^1_{loc}(\R)\;:\; |u|^2(1+x^2)^{-1}\in L^1(\R)\},\\
\mathcal H := \{ u\in \mathcal L\::\; (-\Delta)^\frac{1}{4}u \in L^2(\R)
\}.
\end{aligned}
\end{equation}
These spaces are endowed with the norms 
$$
\|u\|^2_{\mathcal{L}} = \int_{\R} \frac{u(x)^2}{1+|x|^2}dx, \quad  \text{ and } \quad \|u\|_{\mathcal H}^2 = \|(-\Delta)^\frac{1}{4}u\|_{L^2(\R)}^2+ \|u\|_{\mathcal{L}}^2. 
$$

{
It is known that one can construct an isometry between $L^2(\R)$ and $\mathcal L$ and between $H^{\frac{1}{2}}(S^1)$ and $\mathcal H$ via the standard sterographic projection. In particular, $\mathcal H $ is compactly embedded into $\mathcal L$.

\begin{lemma}\label{nondeg}
Let $\Phi \in \mathcal H$ be a solution to \eqref{LiouvilleScaled}. Then $\exists$ $\kappa_0,\kappa_1\in \R$ s.t. $\Phi = \kappa_0 Z_0 + \kappa_1 Z_1$, where $Z_0$ and $Z_1$ are the functions in \eqref{ZsuR}.
\end{lemma}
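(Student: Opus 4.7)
The plan is to transfer the equation from $\R$ to the circle $S^1$ via the stereographic projection $y=\tan(\theta/2)$, $\theta \in (-\pi,\pi)$, and to exploit the explicit spectral decomposition of $(-\Delta_{S^1})^{1/2}$. Setting $\tilde \Phi(\theta) := \Phi(\tan(\theta/2))$, the isometry $\mathcal H \cong H^{1/2}(S^1)$ recalled just before the statement guarantees that $\tilde\Phi \in H^{1/2}(S^1)$. A direct computation gives $\frac{2}{1+y^2}=1+\cos\theta$, which is exactly the conformal factor of the stereographic projection; this is no accident, since the potential $V(y)=\frac{2}{1+y^2}$ on the right-hand side of \eqref{LiouvilleScaled} is the one that makes the linearized problem conformally invariant.

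Exploiting the conformal covariance of $(-\Delta)^{1/2}$ in dimension one, I expect an intertwining identity of the form
\[
(-\Delta_\R)^{1/2}\Phi(y)= \frac{2}{1+y^2}\,(-\Delta_{S^1})^{1/2}\tilde\Phi(\theta),
\]
turning \eqref{LiouvilleScaled} into the eigenvalue equation $(-\Delta_{S^1})^{1/2}\tilde\Phi = \tilde\Phi$ on $S^1$. The formula can be sanity-checked on the explicit solutions: since $\cos\theta = \frac{1-y^2}{1+y^2}=Z_0$, $\sin\theta = \frac{2y}{1+y^2}=Z_1$, and both $\cos\theta$ and $\sin\theta$ are eigenfunctions of $(-\Delta_{S^1})^{1/2}$ with eigenvalue $1$, the two sides of \eqref{LiouvilleScaled} evaluated on $Z_0, Z_1$ coincide with $VZ_j$ as they should.

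Once the problem is reduced to the eigenvalue equation on $S^1$, I expand $\tilde\Phi$ in its Fourier series $\tilde\Phi(\theta)=\sum_{n\in\Z}\hat\Phi_n e^{in\theta}$. The spectrum of $(-\Delta_{S^1})^{1/2}$ is $\{|n|:n\in\Z\}$, with the eigenspace at level $m\geq 1$ spanned by $\cos(m\theta),\sin(m\theta)$, so the eigenvalue equation forces $(|n|-1)\hat\Phi_n=0$ for every $n\in\Z$; only the modes $n=\pm 1$ survive. Hence $\tilde\Phi\in\mathrm{span}\{\cos\theta,\sin\theta\}$, and pulling back through $y=\tan(\theta/2)$ yields $\Phi=\kappa_0 Z_0+\kappa_1 Z_1$.

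The main obstacle is the rigorous justification of the intertwining formula in the critical case $n=2s=1$, for which standard conformal-covariance references usually assume $2s<n$. I expect this can be handled either through the Caffarelli-Silvestre harmonic extension, lifting the problem to $\R^2_+$ and invoking the conformal equivalence between $\R^2_+$ and the unit disk together with the corresponding transformation of the Dirichlet-to-Neumann operator, or by a direct Fourier-analytic computation on the basis $\{e^{in\theta}\}$, reducing to explicit singular-integral identities for the Hilbert transform via density of trigonometric polynomials in $H^{1/2}(S^1)$. Everything else in the argument is then routine.
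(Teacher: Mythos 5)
Your argument is correct in outline but takes a genuinely different route from the paper. The paper never passes to the circle: it first shows that any $\mathcal H$-solution of \eqref{LiouvilleScaled} is smooth, derives the zero-average identity \eqref{zeroavg} by testing with an approximation of the constant function, represents the solution through the logarithmic kernel $\Gamma(x,y)=\frac{1}{\pi}\log\frac{1+|y|}{|x-y|}$, and uses the inversion $x\mapsto 1/x$ to show that $\Phi$ extends to a bounded smooth function; the classification is then imported from Santra's result for bounded solutions (Theorem 1.4 of \cite{Santra}). Your reduction to the eigenvalue equation $(-\Delta_{S^1})^{1/2}\tilde\Phi=\tilde\Phi$ and the spectral fact that the eigenvalue $1$ has eigenspace spanned by $\cos\theta,\sin\theta$ is self-contained: it does not invoke \cite{Santra} at all, and it reproves the statement directly (your observation that the zero mode is killed, $\hat\Phi_0=0$, is exactly \eqref{zeroavg} in disguise). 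What each approach buys: the paper trades conformal machinery for an elementary potential-theoretic boundedness argument plus an external classification theorem; yours trades the external theorem for the critical-case covariance identity on which everything hinges.

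That identity is the one point you must actually write out, and as it stands your proposal only asserts it as an expectation. The isometry $\mathcal H\cong H^{\frac12}(S^1)$ recalled in the paper identifies the function spaces, not the equations, and \eqref{LiouvilleScaled} holds for $\Phi\in\mathcal H$ only in the weak sense, so the intertwining must be justified at that level of regularity (note that $\mathcal H$ contains constants and logarithmically growing functions, so pointwise singular-integral formulas are not available without care). Your extension route does close the gap cleanly: $(-\Delta)^{\frac12}$ on $\R$ and $(-\Delta_{S^1})^{\frac12}$ are the Dirichlet-to-Neumann maps of the finite-energy harmonic extensions to $\R^2_+$ and to the unit disk, the Dirichlet integral is conformally invariant in two dimensions, and under the M\"obius map from $\R^2_+$ to the disk the boundary arclength transforms exactly by the factor $\frac{2}{1+y^2}$, which is where the potential in \eqref{LiouvilleScaled} is absorbed; testing the transported weak formulation against trigonometric polynomials (dense in $H^{\frac12}(S^1)$) then yields $(|n|-1)\hat\Phi_n=0$ for every $n$, i.e. your conclusion. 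With that step written out, your proof is complete and arguably more elementary in its final classification step than the paper's; without it, the argument is a plan at its crucial point rather than a proof.
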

\begin{proof}
First of all, we observe that any solution $\ph$ to \eqref{LiouvilleScaled} is smooth. This follows by standard regularity results (see \cite[Theorem 13]{Tommaso}, the appendix in \cite{JMMX}, and \cite[Corollary 2.4 and 2.5]{RosSer})

Using the density of $C^\infty_c(\R)$ in $\mathcal H$ (which can be proved using the arguments of \cite[Lemma 11 and Lemma 12]{FSV}, since $\|(-\Delta)^\frac{1}{4}u\|_{L^2(\R)}$ is equivalent to the Gagliardo seminorm), we can find a sequence $\psi_{n}\in C_c^\infty(\R)$ such that $\psi_n\to 1$ in $\mathcal H$ (note that constant functions belong to $\mathcal H$). Then, for any $n$, have
$$
\int_{\R} (-\Delta)^\frac{1}{4} \ph (-\Delta)^\frac{1}{4} \psi_n \, dx = \int_{\R} \ph (-\Delta)^\frac{1}{2}\psi_n\, dx  = \int_{\R} f_\ph \psi_n \,dx, 
$$
and passing to the limit as $n\to \infty$, we get 
\begin{equation}\label{zeroavg}
\int_{\R} f_\ph \, dx = 0. 
\end{equation}
Let us now consider the functions  $$\Gamma (x,y) = \frac{1}{\pi}\log\left( \frac{1+|y|}{|x-y|}\right) \quad \text{ and } \quad  \Phi (x) := \int_{\R} \Gamma(x,y) f_{\ph}(y) dy.$$
Since $\ph \in \mathcal L \subseteq L_{\frac{1}{2}}(\R)$, according to \cite[Lemma 2.4]{HydStructure}, we have $ \ph =  \Phi + c$ for some $c\in \R$. Now, observing that  $\Gamma(\frac{1}{x},y) =    \frac{\log |x|}{\pi}+ \Gamma(x,\frac1y)$ for any $x,y\in \R\setminus \{0\}$ with $x\neq \frac{1}{y}$, we have that 
\begin{equation}\label{constdiff}
\ph\left(\frac{1}{x}\right) = \Phi\left(\frac{1}{x}\right) + c  = \frac{\log |x|}{\pi}\underbrace{\int_{\R} f_\ph (y) dy}_{=0 \text{ by } \eqref{zeroavg}} + \int_{\R} \Gamma\left(x,\frac{1}{y}\right) f_\ph (y) dy +c = 2\int_{\R}\Gamma(x,z) \frac{\ph(\frac{1}{z})}{1+z^2}dz+c, 
\end{equation}
for a.e. $x\in \R\setminus \{0\}$. Denoting $\tilde \ph (x):=  \ph(\frac{1}{x})$ and $f_{\tilde \ph}(x) :=2 \frac{\tilde \ph(x)}{1+x^2}$, via a simple change of variable we can show that $\tilde \ph \in \mathcal L$ and $f_{\tilde \ph}\in L^1(\R)$. Since $f_{\tilde \ph}\in L^1(\R)$,  \cite[Lemma 2.3]{HydStructure} implies that  
$$
\tilde \Phi(x): = \int_{\R} \Gamma(x,z)f_{\tilde \ph}(z)dz
$$
is a distributional solution to $(-\Delta)^\frac{1}{2}\tilde \Phi = f_{\tilde \ph} $ in $\R$. Moreover, using that $\tilde \ph \in \mathcal L$ (and in particular $f_{\tilde \ph} \in L^2_{loc}(\R)$) we can repeat the first part of the proof and show that $\tilde \Phi \in C^\infty(\R)$. By \eqref{constdiff},  we infer   that $\tilde \ph $ can be extended to a smooth function on $\R$. In particular this gives that $\ph \in L^\infty(\R)\cap C^\infty(\R)$. Then, we can conclude using directly the classification result in Theorem 1.4 of \cite{Santra}.
\end{proof}
}

%It is well known that the inclusion $\mathcal H \hookrightarrow \mathcal L$ is compact. 

In the following, for $\Vxi \in \mathcal P_{k,\eta}$ and $\lambda>0$,  we shall denote $Z_{i,j}(x):= Z_i(\frac{x-\xi_j}{\de_j})$, $i=0,1$, $j=1,\ldots,k$, where $\delta_j$ is defined as in \eqref{delta}.  Namely, we consider
\begin{equation}\label{Z}
Z_{0,j}(x):= \frac{\delta_j^2-(x-\xi_j)^2}{\delta_j^2+(x-\xi_j)^2} \quad \text{ and } \quad Z_{1,j}(x):= \frac{2\delta_j (x-\xi_j)}{\delta_j^2+(x-\xi_j)^2},
\end{equation}
which are solutions of the problem 
$$
\fl \ph = e^{U_{\de_j,\xi_j}} \ph \quad \text{ in }\R.
$$
We let $PZ_{i,j}:= (-\Delta)^{-\frac{1}{2}} \left( e^{U_{\de_{j},\xi_j}} Z_{i,j}\right)$  be the projection of $Z_{i,j}$ on $X^\frac{1}{2}_0(I)$. Then, we have the following expansions.

{
\begin{lemma}\label{PZ}
As $\lambda \to 0$, we have 
$$
P Z_{0,j} = Z_{0,j}+1 +O(\lambda^2),
$$
$$
P Z_{1,j}= Z_{1,j}+ 2\de_j \frac{\partial H}{\partial \xi}(\xi_j,\cdot)  + O(\lambda^3),
$$
uniformly in $\R$, for $j=1,\ldots,k$.  In particular $PZ_{0,j}=O(\lambda^2)$ and $PZ_{1,j}= O(\lambda)$ in $\R\setminus (\xi_j-\frac{\eta}{2},\xi_j+\frac{\eta}{2})$. 
\end{lemma}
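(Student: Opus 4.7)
The plan is to apply the maximum principle for $\fl$ (as already employed in Lemma \ref{PU}) to suitable differences that are $\fl$-harmonic in $I$ and negligible on $\R\setminus I$. A key observation is that, since $\fl Z_{i,j}=e^{U_{\de_j,\xi_j}}Z_{i,j}$ on all of $\R$, while $\fl PZ_{i,j}=e^{U_{\de_j,\xi_j}}Z_{i,j}$ in $I$ and $PZ_{i,j}\equiv 0$ on $\R\setminus I$, the difference $PZ_{i,j}-Z_{i,j}$ is $\fl$-harmonic in $I$ and reduces to $-Z_{i,j}$ outside $I$. The strategy is then to add to it a small explicit correction, itself $\fl$-harmonic in $I$, so as to cancel the non-decaying tail of $Z_{i,j}$ on $\R\setminus I$.

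For $i=0$ I would set $w_{0,j}:=PZ_{0,j}-Z_{0,j}-1$: since $\fl 1=0$, still $\fl w_{0,j}=0$ in $I$, while on $\R\setminus I$
\[
w_{0,j}(x)=-Z_{0,j}(x)-1=-\frac{2\de_j^2}{\de_j^2+(x-\xi_j)^2}.
\]
Using $|x-\xi_j|\ge\eta$ outside $I$ (as $\xi_j\in(-1+\eta,1-\eta)$), this is uniformly $O(\de_j^2)=O(\lambda^2)$, and the maximum principle propagates the bound to all of $\R$, yielding the first expansion.

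For $i=1$ I would take $w_{1,j}:=PZ_{1,j}-Z_{1,j}-c\,\de_j\,\frac{\partial H}{\partial \xi}(\xi_j,\cdot)$ for a constant $c$ to be determined. Differentiating the identity $\fl H(\xi,\cdot)=0$ in $I$ with respect to $\xi$ gives $\fl\frac{\partial H}{\partial \xi}(\xi_j,\cdot)=0$ in $I$, so again $\fl w_{1,j}=0$ in $I$. For $x\in\R\setminus I$, the explicit expression $H(\xi_j,x)=\frac{1}{\pi}\log|x-\xi_j|$ gives $\frac{\partial H}{\partial \xi}(\xi_j,x)=-\frac{1}{\pi(x-\xi_j)}$, and a direct common-denominator computation shows that $c=2\pi$ is the unique value for which
\[
w_{1,j}(x)=-\frac{2\de_j(x-\xi_j)}{\de_j^2+(x-\xi_j)^2}+\frac{2\de_j}{x-\xi_j}=\frac{2\de_j^3}{(x-\xi_j)\bigl[\de_j^2+(x-\xi_j)^2\bigr]}=O(\lambda^3),
\]
uniformly for $\xi_j\in\mathcal{P}_{k,\eta}$. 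The maximum principle then promotes this bound to all of $\R$. (Note that this computation gives coefficient $2\pi\de_j$, consistent with the factor $2\pi H$ appearing in Lemma \ref{PU}.)

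The final assertion on $\R\setminus(\xi_j-\eta/2,\xi_j+\eta/2)$ is then immediate: one has $Z_{0,j}+1=\frac{2\de_j^2}{\de_j^2+(\cdot-\xi_j)^2}=O(\lambda^2)$ and $|Z_{1,j}|\le 2\de_j/|\cdot-\xi_j|=O(\lambda)$ on this set, while $\frac{\partial H}{\partial \xi}(\xi_j,\cdot)$ is uniformly bounded there by smoothness of the Robin function away from its singularity, uniformly for $\xi_j\in\mathcal{P}_{k,\eta}$. To my mind, the only genuinely delicate step is guessing the correct corrections---the additive constant $1$ for $i=0$ and the multiple of $\frac{\partial H}{\partial \xi}(\xi_j,\cdot)$ for $i=1$---but these are dictated by the behaviour of $Z_{i,j}$ as $|x-\xi_j|\to\infty$ together with the explicit form of $H$ on $\R\setminus I$; once the ansatz is in place, the rest is a routine maximum-principle argument essentially identical to the one already used in Lemma \ref{PU}.
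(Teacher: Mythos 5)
Your proof is correct and follows essentially the same route as the paper: the same corrected differences ($PZ_{0,j}-Z_{0,j}-1$ and $PZ_{1,j}-Z_{1,j}-2\pi\de_j\frac{\partial H}{\partial \xi}(\xi_j,\cdot)$ — and indeed the coefficient is $2\pi\de_j$, exactly as in the paper's own proof, so the $2\de_j$ in the lemma statement is a typo), the same explicit computation on $\R\setminus I$ using $|x-\xi_j|\ge\eta$, and the same maximum-principle conclusion as in Lemma \ref{PU}. The only step the paper treats with more care is your one-line claim that differentiating $(-\Delta)^{\frac{1}{2}}H(\xi,\cdot)=0$ in $\xi$ yields $(-\Delta)^{\frac{1}{2}}\frac{\partial H}{\partial \xi}(\xi_j,\cdot)=0$ in $I$: since this means differentiating a family of distributional identities with respect to a parameter, the paper justifies it by testing against $\ph\in C^\infty_c(I)$ and an auxiliary $\psi\in C^\infty_c(-1,1)$ in the $\xi$-variable, exchanging the order of integration and integrating by parts in $\xi$; you should include such a justification (or an equivalent dominated-convergence argument), as it is the only genuinely nontrivial point of the proof.
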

\begin{proof}
First, note that for any $x\in \R$ the function $\xi\rightarrow H(\xi,x)$ belongs to $C^1(I)$, with derivative
$$
\frac{\partial H}{\partial \xi} (\xi, x) = 
\begin{cases}
-\frac{1}{\pi} \frac{x +\frac{\xi \sqrt{1-x^2}}{\sqrt{1-\xi^2}}}{1-x \xi + \sqrt{(1-\xi^2)(1-x^2)}} & \quad \text{ for } x\in I,  \\
\frac{1}{\pi} \frac{1}{\xi-x}  & \quad \text{ for } x \in \R \setminus I. 
\end{cases}
$$
We claim that $\frac{\partial H}{\partial \xi}(\xi,\cdot)$ is $\frac{1}{2}$-harmonic in $I$, {for any $\xi \in I$.} We prove that this is true in the sense of distributions. To show this, we observe that
$$
\int_{\R}  \frac{\partial H}{\partial \xi} (\xi,x) \; \fl \ph {(x)} \,  dx = 0 , \quad \forall \ph\in C^\infty_c(I). 
$$ 
Indeed, if we take $\psi\in C^\infty_c (-1,1)$, we have 
$$
\begin{aligned}
 \int_{\R} \psi(\xi) \int_{\R}  \frac{\partial H}{\partial \xi} (\xi,x) \, \fl \ph (x) \,  dx d\xi & =   \int_{\R}\fl \ph (x) \int_{\R}   \psi(\xi) \frac{\partial H}{\partial \xi} (\xi,x)  d\xi dx   \\
&   =-    \int_{\R}\fl \ph{(x)}  \int_{\R}   \psi'(\xi)  H(\xi,x)d\xi dx\\
&   =-    \int_{\R}  \psi'(\xi) \int_{\R}    \fl \ph{(x)}  \,   H(\xi,x)\,dx\,d\xi =0, 
\end{aligned}
$$
where the last equality follows from $(-\Delta)^\frac{1}{2} H(\xi,x)=0$. Since $\varphi$ and $\psi$ are arbitrary we have proved the claim.

Now, the statement can be proved as in Lemma \ref{PU}. Let us fix $1\le j\le k$. Since $\frac{\partial H}{\partial \xi}(\xi_j,\cdot)$ is $\frac{1}{2}$-harmonic in $I$,  
%(see Lemma \ref{Harmonic}),
 the definitions of $PZ_{0,j}$ and {$PZ_{1,j}$} imply that also the functions $v_{0,j} := PZ_{0,j} - Z_{0,j} - 1$ and $v_{1,j} := PZ_{1,j} - Z_{1,j} - 2\pi \delta  \frac{\partial H}{\partial \xi}(\xi_j,\cdot)$ are $\frac{1}{2}-$harmonic in $I$. Additionally, for $x \in \R\setminus I$, we have that 
$$
v_{0,j} (x) =- \frac{\delta_j^2-(x-\xi_j)^2}{\delta_j^2+(x-\xi_j)^2}-1 = O(\de_j^2)=O(\lambda^2),
$$
$$
v_{1,j}(x)  = -\frac{2\delta_j (x-\xi_j)}{\de_j^2+(x-\xi_j)^2} + \frac{2\delta_j}{(x-\xi_j)}=    O(\delta_j^3)=O(\lambda^3).
$$
Thus, we conclude via the maximum principle as in the proof of Lemma \ref{PU}.
\end{proof}
}

\begin{rem}\label{Orth}
For $i,j\in \{0,1\}$ and $h,l\in \{1,\ldots,k\}$, we have the orthogonality condition 
\[\begin{split}
\int_{\R} (-\Delta)^\frac{1}{4} P Z_{i,h} \cdot (-\Delta)^\frac{1}{4} P Z_{j,l} \, dx & = \int_{\R} e^{U_{\de_l,\xi_l}} Z_{i,h} P Z_{j,l} \, dx  = \pi \delta_{i,j}\delta_{h,l} + O(\lambda),
\end{split}
\]
where $\delta_{i,j}$ denotes the Kronecker delta symbol. Indeed, for $h\neq l$ we have $PZ_{j,l} =O(\lambda)$ in $\R\setminus (\xi_l- \frac{\eta}{2},\xi_l+\frac{\eta}{2})$ and $e^{U_{\de_h,\xi_h}} =O(\lambda)$ in $(\xi_l-\frac{\eta}{2},\xi_l+\frac{\eta}{2})$, while for $h =l$, we have 
$$
\int_{\R} e^{U_{\de_l,\xi_l}} Z_{i,h} P Z_{j,l} dx  = \int_{\R} e^{U_{\de_l,\xi_l}} Z_{i,h} Z_{j,l} dx + O(\lambda) =  \int_{\R} \frac{2 Z_{i}(y)Z_j(y)}{1+y^2} dy  + O(\lambda)= \pi \delta_{i,j} + O(\lambda). 
$$
\end{rem}

A standard procedure consists in inverting the operator ${L}$ on the orthogonal of the space generated by the functions $PZ_{i,j}$, $i=0,1$, $j=1,\ldots,k$, which can be considered as an approximate kernel for $ L$.   However,  Lemma \ref{PZ} shows that $PZ_{0,j}$ is not close to $Z_{0,j}$, as their difference approaches $1$ as $\lambda\to 0$.
%In other words, this means that $PZ_{0,j}$ cannot be considered an approximate solution of  ${\color{blue} L}\ph =0$.  
For this reason we can construct a smaller approximate kernel for $ L$ using only the functions $PZ_{1,j}$, $j=1,\ldots,k$.

In the following, for $\Vdelta = (\delta_1,\ldots,\delta_k)$ {defined as in \eqref{delta} and} for any $\Vxi \in \mathcal P_{k,\eta}$, we shall denote
{ $$K_{\Vde,\Vxi}= <\{P Z_{1,j}, \; j\in \{1,\ldots k\} \}>.$$ Let also $\pi$ and $\pi^\perp$  be the projections of $X^\frac{1}{2}_0(I)$ respectively into $K_{\Vde,\Vxi}$ and $K_{\Vde,\Vxi}^\perp$. 
%
%then, we can rewrite our equation 
%$$
%L \ph = E + N(\ph)
%$$ 
%as
%\begin{equation}\label{L_tilde_eq}
% L\ph =  (-\Delta)^{-\frac{1}{2}}  E +  (-\Delta)^{-\frac{1}{2}}  N(\ph). 
%\end{equation}
We now establish the invertibility of $L$ on $K_{\Vde,\Vxi}^\perp$. }
% Since we are working in a Hilbert space, and since $ L$ is  a Frehdolm operator of index $0$ (since it decomposes as the identity of $X^\frac{1}{2}_0(I)$ plus a compact operator),  it is enough to prove that $ L$ is injective to have the invertibility. 
%In the following, it will be convenient to consider the spaces 
%$$
%\begin{aligned}
%\mathcal L := \{u \in L^1_{loc}(\R)\;:\; |u|^2(1+x^2)^{-1}\in L^1(\R)\},\\
%\mathcal H := \{ u\in \mathcal L\::\; (-\Delta)^\frac{1}{4}u \in L^2(\R)
%\}.
%\end{aligned}
%$$
%It is well known that the inclusion $\mathcal H \hookrightarrow \mathcal L$ is compact. 

\begin{lemma}\label{bound_psi}
There exist $\bar \lambda$, $C>0$  such that 
\[
\| \psi\|\le C |\log \lambda| \| \pi^\perp L \,\psi\|.  
\]
for any $\lambda \in (0,\bar \lambda)$, $\Vxi \in \mathcal{P}_{k,\eta}$ and   $\psi \in K_{\Vdelta,\Vxi}^\perp$, with $\Vdelta$ given by \eqref{delta}.
\end{lemma}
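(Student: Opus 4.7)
The plan is to argue by contradiction: suppose there exist sequences $\lambda_n\to 0$, $\Vxi_n\in\mathcal P_{k,\eta}$ (with $\Vdelta_n$ determined as in \eqref{delta}) and $\psi_n\in K_{\Vdelta_n,\Vxi_n}^\perp$ with $\|\psi_n\|=1$ but $|\log\lambda_n|\,\|\pi^\perp L\psi_n\|\to 0$. I would write $L\psi_n = h_n + \sum_{j=1}^k d_{j,n}\, PZ_{1,j}$ with $h_n := \pi^\perp L\psi_n$ (here and below, $PZ_{1,j}$ and related objects tacitly depend on $n$ through $\Vdelta_n,\Vxi_n$), so $\|h_n\|=o(1/|\log\lambda_n|)$, and first bound the $d_{j,n}$'s. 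Testing against $PZ_{1,l}$ and inverting the Gram matrix $\pi\,\mathrm{Id}+O(\lambda)$ from Remark \ref{Orth}, using $\psi_n\in K^\perp$ (so that $\int e^{U_{\delta_l,\xi_l}}Z_{1,l}\psi_n\,dx=0$), and splitting $\int f'_{\lambda_n}(\wn)\psi_n PZ_{1,l}\,dx$ into contributions near and far from the peaks via Lemmas \ref{ExpDer}--\ref{PZ} together with H\"older's inequality and \eqref{gen Est}, I expect to obtain $|d_{j,n}|=O(\lambda_n^\alpha)$ for some $\alpha>0$.

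Next I would perform blow-up analysis at each peak. Up to subsequences, $\Vxi_n\to\Vxi_\infty$; the rescaled functions $\tilde\psi_{n,j}(y):=\psi_n(\xi_{j,n}+\delta_{j,n}y)$ are bounded in $\mathcal H$, so by the compact embedding $\mathcal H\hookrightarrow\mathcal L$ they converge (up to subsequences) to some $\tilde\psi_j$. Passing to the limit in the rescaled equation satisfied by $\tilde\psi_{n,j}$ (Lemma \ref{ExpDer} yields $\delta_{j,n}f'_{\lambda_n}(\wn)(\xi_{j,n}+\delta_{j,n}\cdot)\to\frac{2}{1+y^2}$, while the rescaled contributions of $h_n$ and of $\sum d_{j,n}PZ_{1,j}$ vanish) and applying Lemma \ref{nondeg} gives $\tilde\psi_j = \kappa_0^j Z_0+\kappa_1^j Z_1$. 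The rescaled orthogonality $\int\frac{2Z_1\tilde\psi_{n,j}}{1+y^2}\,dy=0$ then forces $\kappa_1^j=0$ in the limit. Writing the energy identity $\|\psi_n\|^2 = \langle\psi_n,h_n\rangle + \int f'_{\lambda_n}(\wn)\psi_n^2\,dx$ (from $\psi_n\in K^\perp$) and passing to the limit using the blow-up together with the explicit computation $\int_\R\frac{2(1-y^2)^2}{(1+y^2)^3}\,dy=\pi$, one arrives at $\pi\sum_j(\kappa_0^j)^2\to 1$.

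The main obstacle is then to show that in fact all $\kappa_0^j$ vanish, which is where the $|\log\lambda|$ factor enters crucially. A direct computation using $(-\Delta)^{-1/2}(e^{U_{\delta_j,\xi_j}}Z_{0,j})=PZ_{0,j}$ and $(-\Delta)^{-1/2}(e^{U_{\delta_j,\xi_j}})=PU_{\delta_j,\xi_j}$, combined with Lemmas \ref{PZ} and \ref{ExpDer}, shows $L(PZ_{0,j}) = -PU_{\delta_j,\xi_j} + \tau_j$ with $\|\tau_j\| = O(\lambda^{1-\varepsilon})$ for every $\varepsilon>0$, while Lemma \ref{PU} produces $\|PU_{\delta_j,\xi_j}\|^2 = 2\pi|\log\lambda|+O(1)$ and $\langle PU_{\delta_j,\xi_j},PU_{\delta_l,\xi_l}\rangle = O(1)$ for $j\ne l$. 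I would then define $\alpha_{j,n}$ by orthogonally projecting $\psi_n$ onto $\mathrm{span}\{PZ_{0,j}\}$ (whose Gram matrix is also $\pi\,\mathrm{Id}+O(\lambda)$ by Remark \ref{Orth}); the blow-up gives $\alpha_{j,n}\to\kappa_0^j$, and a direct expansion of $\|\psi_n-\sum\alpha_{j,n}PZ_{0,j}\|^2$ combined with the energy identity above shows this quantity tends to zero. The delicate technical step, which I expect to be the hardest part, is propagating this smallness through $L$ via a quantitative refinement of the argument in Remark \ref{similar}, so as to conclude $L\psi_n = -\sum_j\alpha_{j,n}PU_{\delta_{j,n},\xi_{j,n}} + \sigma_n$ with $\|\sigma_n\| = o(\sqrt{|\log\lambda_n|})$. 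Combined with the almost-diagonal structure of $\{PU_{\delta_j,\xi_j}\}$, this gives $\|L\psi_n\|^2 \gtrsim |\log\lambda_n|\sum_j(\kappa_0^j)^2$. On the other hand, $\|L\psi_n\|^2 = \|h_n\|^2 + \|\pi L\psi_n\|^2 = o(1/|\log\lambda_n|^2) + O(\lambda_n^{2\alpha}) = o(1/|\log\lambda_n|)$, forcing $\sum_j(\kappa_0^j)^2\to 0$, which contradicts $\pi\sum_j(\kappa_0^j)^2\to 1$.
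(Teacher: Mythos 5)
Your first half---the contradiction setup, the bound on the coefficients of $\pi L\psi_n$, the blow-up of the rescalings $\tilde\psi_{n,j}$ at each peak, the use of Lemma \ref{nondeg} and of the orthogonality to $PZ_{1,j}$ to kill the $Z_1$-components, and the energy identity giving $\pi\sum_j(\kappa_0^j)^2=1$---follows essentially the same lines as the paper and is sound modulo routine estimates (note only that $\|PU_{\de_j,\xi_j}\|^2=4\pi|\log\lambda|+O(1)$ by \eqref{PUii}, not $2\pi|\log\lambda|$). The genuine gap is exactly the step you yourself flag as the hardest: passing from $\|r_n\|\to0$, where $r_n:=\psi_n-\sum_j\alpha_{j,n}PZ_{0,j}$, to $L\psi_n=-\sum_j\alpha_{j,n}PU_{\de_{j,n},\xi_{j,n}}+\sigma_n$ with $\|\sigma_n\|=o(\sqrt{|\log\lambda_n|})$. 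This requires $\|Lr_n\|=o(\sqrt{|\log\lambda_n|})$, but your compactness argument yields $\|r_n\|=o(1)$ with no rate, while the operator norm of $L$ is itself of order $|\log\lambda|$: taking $v=PU_{\de_i,\xi_i}/\|PU_{\de_i,\xi_i}\|$ one has, by duality, $\|(-\Delta)^{-\frac12}(\f'(\w)v)\|\ge\int_I\f'(\w)v^2\,dx\ge c\,|\log\lambda|$, since $\int_I e^{U_{\de_i,\xi_i}}PU_{\de_i,\xi_i}^2\,dx\sim 8\pi(\log\de_i)^2$ while $\|PU_{\de_i,\xi_i}\|^2\sim 4\pi|\log\de_i|$. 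Hence from $\|r_n\|=o(1)$ you can only conclude $\|Lr_n\|=o(|\log\lambda_n|)$, the lower bound $\|L\psi_n\|^2\gtrsim|\log\lambda_n|\sum_j(\kappa_0^j)^2$ is lost, and the final contradiction disappears. A ``quantitative refinement of Remark \ref{similar}'' cannot supply the missing estimate: that remark controls the nonlinear differences $\f(\w+\ph)-\f(\w)$, whereas the obstruction here is the linear map $r\mapsto(-\Delta)^{-\frac12}(\f'(\w)r)$, whose norm genuinely blows up as $\lambda\to0$ along precisely the log-concentrating directions that $r_n$ is not known to avoid.

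This is exactly where the paper takes a different, cheaper route which never applies $L$ to the remainder: it tests the equation satisfied by $\psi_n$ against the \emph{bounded} function $PZ_{0,i,n}$ (with $\|PZ_{0,i,n}\|=O(1)$), so all error terms are $O\bigl(\|PZ_{0,i,n}\|(\|h_n\|+\|\pi L\psi_n\|)\bigr)=o(|\log\lambda_n|^{-1})$; comparing with the identity $\langle PZ_{0,i,n},\psi_n\rangle=\int_{\R} e^{U_{i,n}}Z_{0,i,n}\psi_n\,dx$ and using $PZ_{0,i,n}-Z_{0,i,n}=1+O(\lambda^2)$ (Lemma \ref{PZ}) yields $\int_{\R} e^{U_{i,n}}\psi_n\,dx=o(|\log\lambda_n|^{-1})$. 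The $|\log\lambda|$ factor is then spent by multiplying this by $\log\de_{i,n}=O(|\log\lambda_n|)$, which gives $\int_{\R} e^{U_{i,n}}(U_{i,n}+\log\de_{i,n})\psi_n\,dx\to0$ and kills the $Z_0$-component in the blow-up limit because $\int_\R\frac{2Z_0}{1+y^2}\log\frac{2}{1+y^2}\,dy=\pi\neq0$. To repair your argument you would need either a quantitative rate $\|r_n\|=o(|\log\lambda_n|^{-1/2})$, which the soft blow-up step does not provide, or to replace the norm estimate on $Lr_n$ by this duality argument with $PZ_{0,j}$ as test function.
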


\begin{proof}
We argue by contradiction. Suppose that there exist $\lambda_n \to 0$, $\Vxi_n =(\xi_{1,n},\ldots,\xi_{k,n}) \in \mathcal{P}_{k,\eta}$, and $\psi_n\in {K_{\Vdelta_n,\Vxi_n}}$ (where $\Vde_n =(\de_{1,n},\ldots,\de_{k,n})$ with $\de_{j,n}=\de_{i,n}(\lambda_n,\xi_n)$ given by \eqref{delta}) such that 
$$
\|\psi_n\|=1, \quad  \text{ and } \quad|\log \lambda_n|\|h_n\|\to 0, \quad \text{ where }\quad h_n:= \pi^\perp L\,\psi_n.
$$
Throughout this proof we will write $f_n:= f_{\lambda_n}, $ $\wn := \omega_{\Va,\Vde_n,\Vxi_n}$ and $U_{i,n}= U_{\de_{i,n},\xi_{i,n}}$. For any $i=1,\ldots,k$, we also let $Z_{0,i,n}$ and $Z_{1,i,n}$ denote the functions in \eqref{Z} with $\xi_i=\xi_{i,n}$ and $\delta_j= \delta_{i,n}$.

 By definition of ${\pi}^\perp$ there exists  $\zeta_n \in {K_{\Vdelta_n,\Vxi_n}}$ such that 
$ L \psi_n  = h_n + \zeta_n$. This means that 
\begin{equation}\label{testbis}
\begin{aligned}
\int_{\R} (-\Delta)^\frac{1}{4} \psi_n   (-\Delta)^\frac{1}{4} v  dx & = \int_{\R} f_n'(\wn) \psi_n v \, dx \\ &  +  \int_{\R} (-\Delta)^\frac{1}{4} h_n   (-\Delta)^\frac{1}{4} v  \, dx + \int_{\R} (-\Delta)^\frac{1}{4} \zeta_n   (-\Delta)^\frac{1}{4} v \, dx,
\end{aligned}
\end{equation}
for any $v\in X^\frac{1}{2}_0(I)$. Note that taking $v=\psi_n \in K_{\Vde_n,\Vxi_n}^\perp$, one finds
$$
\|\psi_n\|^2 = \int_{\R} f_n'(\wn) \psi_n^2 dx + \int_{\R} (-\Delta)^\frac{1}{4} \psi_n (-\Delta)^\frac{1}{4} h_n \,dx = \int_{\R} f_n'(\wn) \psi_n^2 dx  + O(\|h_n\|),
$$
from which we get 
\begin{equation}\label{convto1}
\int_{\R} f_n'(\wn) \psi_n^2 dx \to 1,
\end{equation}
as $n\to \infty$. Since $f_n'(\wn)$ is bounded in $L^1(I)$ by Lemma \ref{ExpDer}, Holder's inequality also gives 
\begin{equation}\label{bound}
\int_{\R} f_n'(\wn) |\psi_n|  dx  \le \left(\int_{\R} f_n'(\wn) \psi_n^2dx\right)^\frac{1}{2} \left(\int_{I} f_n'(\wn)dx\right)^\frac{1}{2} =O(1). 
\end{equation}
Keeping in mind the relations above, we split the rest of the proof into several steps.

\medskip\medskip
{\bf Step 1} Since $\zeta_n \in K_{\Vdelta_n,\Vxi_n}$, we can write $\zeta_n = \sum_{i=1}^k  c_{i,n} P Z_{1,i,n}$. \emph{We have $c_{i,n}=O(\|\zeta_n\|)$ for $i=1,\ldots,k$. In particular $\|\zeta_n\|_{L^\infty(\R)} = O(\|\zeta_n\|)$. }

\medskip
\noindent Indeed, setting $\bar c_n= \max\{|c_{i,n}|\;:\; 0\le i\le k\}$ and using Remark \ref{Orth}, we find that 
$$
\begin{aligned}
\|\zeta_n\|^2  & = \sum_{i=1}^k\sum_{j=1}^k  c_{i,n} c_{j,n}  \int_{\R} (-\Delta)^\frac{1}{4} PZ_{1,i,n}  (-\Delta)^\frac{1}{4} PZ_{1,j,n} dx \\
%& = \sum_{i=1}^k\sum_{j=1}^k c_{i,n} %c_{j,n}  \int_{\R} e^{U_{\de_{i,n},
%\xi_{i,n}}} Z_{1,\delta_{i,n},\xi_{i,n}}  %PZ_{1,\delta_{j,n},\xi_{j,n}} \\
%& = \sum_{i=1}^k  c_{i,n}^2  \int_{\R} %e^{U_{\de_{i,n},\xi_{i,n}}} Z_{1,\delta_{i,n} ,\xi_{i,n}} PZ_{1,\delta_{i,n} ,\xi_{i,n}}  +  O(\lambda_n \bar c_n^2 )\\
 & = \pi \sum_{i=1}^k  c_{i,n}^2  +  O(\lambda_n \bar c_n^2 )\\
&  \ge \pi \bar c_n^2 + O(\lambda_n \bar c_n^2). 
\end{aligned}
$$
Since $\lambda_n \to 0$, this implies that $\bar c_n = O(\|\zeta_n\|)$.

\medskip{
{\bf Step 2}: \emph{For $i=1,\ldots,k$, and $s= 1,2$, we have that 
\begin{equation}\label{bound1}
\int_{\xi_{i,n}-\frac{\eta}{2}}^{\xi_{i,n}+\frac{\eta}{2}} |f_n'(\wn) - e^{U_{i,n}}| |\psi_n|^s \,dx = O(\sqrt{\lambda_n}). 
\end{equation}
Moreover, we have 
\begin{equation}\label{bound2}
\int_{\R} e^{U_{i,n}} |\psi_n|^s dx =O(1) \quad \text{ and } \int_{\R} |f_n'(\wn) - e^{U_{i,n}}| |\psi_n|^s  |PZ_{j,i,n}|\,dx =O(\sqrt{\lambda_n}).
\end{equation}
}
Indeed, in view of Lemma \ref{ExpDer}, we have  
\begin{equation}\label{trivialbound}
\int_{\xi_{i,n}-\frac{\eta}{2}}^{\xi_{i,n}+\frac{\eta}{2}} |f_n'(\wn) - e^{U_{i,n}}| |\psi_n|^s dx = \int_{\xi_{i,n}-\frac{\eta}{2}}^{\xi_{i,n}+\frac{\eta}{2}} e^{U_{i,n}}|\psi_n|^sO(|x-\xi_{i,n}|)dx +  \int_{\xi_{i,n}-\frac{\eta}{2}}^{\xi_{i,n}+\frac{\eta}{2}} e^{U_{i,n}}|\psi_n|^s O(\lambda_n^2) dx .  
\end{equation}
By Holder's inequality, estimate \eqref{gen Est} and Sobolev's inequality, we get
\begin{equation}\label{radice}
\int_{\xi_{i,n}-\frac{\eta}{2}}^{\xi_{i,n}+\frac{\eta}{2}} e^{U_{i,n}} |x-\xi_{i,n}| |\psi_n|^s dx \le \left(\int_{\xi_{i,n}-\frac{\eta}{2}}^{\xi_{i,n}+\frac{\eta}{2}}e^{2U_{i,n}}|x-\xi_{i,n}|^2dx \right)^\frac{1}{2}\|\psi_n^s\|_{L^2(I)} = O(\sqrt{\lambda_n}).  
\end{equation}
Furthermore, using again Lemma \ref{ExpDer} and \eqref{convto1}-\eqref{bound}, we find that
$$
\begin{aligned}\int_{\xi_{i,n}-\frac{\eta}{2}}^{\xi_{i,n}+\frac{\eta}{2}} e^{U_{i,n}}  |\psi_n|^s dx &= \int_{\xi_{i,n}-\frac{\eta}{2}}^{\xi_{i,n}+\frac{\eta}{2}} f_n'(\omega_n)  |\psi_n|^s dx  + O(\sqrt{\lambda_n} ) + O\left(\lambda_n^2 \int_{\xi_{i,n}-\frac{\eta}{2}}^{\xi_{i,n}+\frac{\eta}{2}} e^{U_{i,n}}  |\psi_n|^s dx \right)\\
& = O(1)+   O\left(\lambda_n^2 \int_{\xi_{i,n}-\frac{\eta}{2}}^{\xi_{i,n}+\frac{\eta}{2}} e^{U_{i,n}}  |\psi_n|^s dx \right),
\end{aligned}
$$
which implies that 
\begin{equation}\label{FinBound}
\int_{\xi_{i,n}-\frac{\eta}{2}}^{\xi_{i,n}+\frac{\eta}{2}} e^{U_{i,n}}  |\psi_n|^s dx  =O(1).
\end{equation}
Then, we get \eqref{bound1} by substituting \eqref{radice} and \eqref{FinBound}  in \eqref{trivialbound}. The first estimate in \eqref{bound2} follows by \eqref{FinBound} and the bound $e^{U_{i,n}} =O(\lambda_n)$ in $\R\setminus (\xi_{i,n}-\frac{\eta}{2},\xi_{i,n}+\frac{\eta}{2})$. Similarly, the second estimate in \eqref{bound2} is a consequence of \eqref{bound1} and the bounds $PZ_{0,i,n} = O(\lambda_n^2)$, $PZ_{1,i,n}=O(\lambda_n)$ in $\R\setminus (\xi_{i,n}-\frac{\eta}{2},\xi_{i,n}+\frac{\eta}{2})$.}

\medskip\medskip
{\bf Step 3}: \emph{We have $\|\zeta_n\| = {o(|\log\lambda_n|^{-1})}$ as $n\to \infty$.} 
\medskip

Taking $v = \zeta_n$ in \eqref{testbis} and recalling that $\zeta_n \in K_{\Vdelta_n,\Vxi_n}$, $\psi_n, h \in K_{\Vdelta_n,\Vxi_n}^\perp$ we find that 
\begin{equation}\label{testzetabis}
\begin{aligned}
0 & = \int_{\R} f_n'(\wn) \psi_n \zeta_n dx 
% +  \int_{\R} (-\Delta)^\frac{1}{4} h_n   (-\Delta)^\frac{1}{4} \zeta_n  dx 
 + \|\zeta_n\|^2 \\
% & = \int_{\R} f_n'(\wn) \psi_n \zeta_n dx  
 %+ O(\|h_n\|\|\zeta_n\|) 
 %+ \|\zeta_n\|^2 \\
 & =   \sum_{i=1}^k  c_{i,n} \int_{\R} f_n'(\wn)  \psi_n P Z_{1,i,n} dx
% +   O(\|h_n\|\|\zeta_n\|) 
 + \|\zeta_n\|^2.
\end{aligned}
\end{equation}

Now, for $i=1,\ldots ,k $ and Step 2 and Lemma \ref{PZ} give 
\[
\int_{\R} f_n'(\wn)  \psi_n P Z_{1,i,n}dx    = \int_{\R}  e^{U_{i,n}}PZ_{1,i,n} \psi_n dx + O(\sqrt{\lambda_n})  = \underbrace{\int_{\R}   \psi_n  (-\Delta)^\frac{1}{2}PZ_{1,i,n} dx}_{=0 \text{ by }\psi_n\in K^{\perp}_{\Vde_n,\Vxi_n}}  +O(\sqrt{\lambda_n}).
\]
 Then, using also Step 1, we can rewrite \eqref{testzetabis} as 
$$\|\zeta_n\| = O(\sqrt{\lambda_n})= o(|\log \lambda_n|^{-1}).$$
%$$
%\|\zeta_n\|^2 = O(\sqrt{\lambda_n}\|\xi_n\|)+ O(\|h_n\|\|\xi_n\|),
%$$
%which gives $\|\zeta_n\| = O(\sqrt{\lambda_n})+O(\|h_n\|) = o(|\log \lambda_n|^{-1})$, 
%since  $\|h_n\||\log \lambda_n|\to 0$.

\medskip\medskip
{{\bf Step 4} \emph{For $i=1,\ldots,k$, we have that  
$$
\int_{\R} e^{U_{i,n}} \psi_n \, dx = o(|\log \lambda_n|^{-1}) \quad \text{ and }\quad 
\int_{\R} e^{U_{i,n}} U_{i,n} \psi_n \, dx \to 0.
$$
}}
First of all, taking $v=PZ_{0,i,n}$ in \eqref{testbis}, { and  using Steps 2-3 and {Lemma \ref{PZ}}, we find that 
$$\begin{aligned}
\int_{\R} (-\Delta)^\frac{1}{4} PZ_{0,i,n} \cdot (-\Delta)^\frac{1}{4}\psi_n dx & = \int_{\R} f_n'(\wn) \psi_{n} PZ_{0,i,n} dx  + O(\|PZ_{0,i,n}\|\|h_n\|)  +O(\|PZ_{0,i,n}\|\|\zeta_n\|)\\
 & = \int_{\R} e^{U_{i,n}} \psi_n PZ_{0,i,n} dx + o(|\log \lambda_n|^{-1})
 \\ &=     \int_{\R} e^{U_{i,n}} \psi_n Z_{0,i,n}dx + \int_{\R} e^{U_{i,n}} \psi_n dx + o(|\log \lambda_n|^{-1}). 
\end{aligned}
$$}
Besides, by definition of $PZ_{0,i,n}$, we have 
$$
\int_{\R} (-\Delta)^\frac{1}{4} PZ_{0,i,n} \cdot (-\Delta)^\frac{1}{4}\psi_n dx  = \int_{\R} e^{U_{i,n}} \psi_n Z_{0,i,n}\, dx .
$$
If we combine the two estimates above,  we find that 
\[
\int_{\R} e^{U_{i,n}} \psi_n\, dx = o(|\log\lambda_n|^{-1}).
\]
Since $\|U_{i,n}\|_{L^\infty(I)}=O(|\log \lambda_n|)$ (in fact $0\le |x-\xi_i|\le 2$ implies $\frac{2\de_{i,n}}{\delta_{i,n}^2+4}\le e^{U_{i,n}}\le \frac{2}{\delta_i}$ in $I$) and $\psi_{n} =0$ in $\R \setminus I$, we get the conclusion.

%In view of Lemma \ref{ExpDer} and Step 2, we also get 
%\begin{equation}\label{conv2}
%\int_{\R} f_n'(\wn)\psi_n dx  = o(|\log\lambda_n|^{-1}). 
%\end{equation}
%Next, taking $v= P U_{i,n}$ in \eqref{testbis}, and using the definition of $P U_{i,n}$, we get 
%\begin{equation}\label{crucial}
%\int_{\R} e^{U_{i,n}}\psi_n dx  = \int_{\R} f_n'(\wn) \psi_n P U_{i,n} dx + O(\|P U_{i,n}\|\|h_n\|) + o(1),
%% O(|\xi_n|\lambda_n)
%\end{equation}
%and, thanks to Lemma \ref{PU} and Lemma \ref{ExpDer}, we find that 
%$$\begin{aligned}
%\int_{\R} f_n'(\wn) \psi_n P U_{i,n} dx & =  \underbrace{-\log (2\delta_{i,n}) \int_{\R} f_n'(\wn) \psi_n\, dx}_{{\to 0 \text{ by } \eqref{conv2}}}  + \underbrace{\int_{\R} f_n'(\wn) U_{i,n} \psi_n \, dx}_{=:I_n} \\
%& \quad + 2\pi \int_{\R} f_n'(\wn) H(\xi_{i,n},x) \psi_n \, dx +  O(\de_{i,n}^2)\\
%%& =  I_n + o(1) + \sum_{j=1}^k H(\xi_{i,n},\xi_{j,n}) \int_{\R} f_n'(\wn) \psi_n dx +   \sum_{j=1}^k \int_{\xi_{j,n} - \frac{\eta}{2}}^{\xi_{j,n}+\frac{\eta}{2}}  f_n'(\omega_{a,\delta_{i,n} , \xi_{i,n}})\psi_n  O(|x-\xi_{j,n}|) dx\\
%& = I_n + o(1) +   \sum_{j=1}^k \int_{\xi_{j,n} - \frac{\eta}{2}}^{\xi_{j,n}+\frac{\eta}{2}}  O(e^{U_{j,n}}|x-\xi_{j,n}| |\psi_n|) dx \\
%& = I_n +o(1),
%\end{aligned}
%$$
%where the last inequality relies on \eqref{radice}. Therefore, using \eqref{conv1} and \eqref{crucial}, we obtain  
%$$
%I_n = O(\|PU_{i,n}\|\|h_n\|) + o(1). 
%$$
%Since 
%$$
%\|P U_{i,n}\|^2 = \int_{\R} e^{U_{i,n}} PU_{i,n} dx  = O(|\log \lambda_n|),
%$$
%and $\|h\| =o(|\log \lambda_n |^{-1})$, we can conclude that 
%$$
%I_n \to 0,
%$$
%as claimed. 

\medskip
{\bf Step 5}: \emph{For $i=1,\ldots, k$, the function $\Psi_{i,n} := \psi_n(\xi_{i,n}+ \de_{i,n} \cdot)$ satisfies $\Psi_n \to 0$ in $\mathcal{L}$, where $\mathcal L$  is defined in \eqref{spaces}. }

First of all, we observe that 
$$
\|\Psi_{i,n}\| = \|\psi_n\| = 1 \quad \text{ and }\quad 
2 \int_{\R} \frac{|\Psi_{i,n}|^2}{1+|x|^2} dx   = \int_{\R} e^{U_{{i,n}}} \psi_n^2 dx   \le C,
$$
by Step 2. Then, $\Psi_{i,n}$ is uniformly bounded in the space 
$\mathcal H 
$ (see \eqref{spaces}), 
which is compactly embedded in $\mathcal{L}.$ Thus, there exists $\Psi_{\infty} \in \mathcal{H}$  such that, up to subsequences, we have $\Psi_{i,n}\rightharpoonup\Psi_{\infty}$ weakly in $\mathcal H$ and$\Psi_{i,n}\to \Psi_{\infty}$ in $\mathcal L$ as $n\to +\infty$. The weak convergence in $\mathcal{H}$ implies that 
$$
\int_{\R} (-\Delta)^\frac{1}{4} \Psi_{i,n} \cdot (-\Delta)^\frac{1}{4} w \; dx \to  \int_{\R} (-\Delta)^\frac{1}{4} \Psi_{\infty} \cdot (-\Delta)^\frac{1}{4} w \, dx,
$$ 
for any $w \in C^\infty_c(\R)$. Besides, using \eqref{testbis} with $v=v_n := w( \frac{\cdot-\xi_{i,n}}{\de_{i,n}})$, we get
$$\begin{aligned}
\int_{\R} (-\Delta)^\frac{1}{4} \Psi_{i,n} \cdot (-\Delta)^\frac{1}{4} w \; dx & = \int_{\R} (-\Delta)^\frac{1}{4} \psi_{n} \cdot (-\Delta)^\frac{1}{4} v_n  \; dx \\
& = \int_{\R}   f_n'(\wn) \psi_n  v_n \, dx + \int_{\R} (-\Delta)^{\frac{1}{4}} \tilde h_n  (-\Delta)^{\frac{1}{4}} v_n dx,
\end{aligned} 
$$
where $\tilde h_n = h_n + \zeta_n$. Since $\tilde h_n \to 0$ (by Step 3), we get that 
$$
\int_{\R} (-\Delta)^{\frac{1}{4}} \tilde h_n  (-\Delta)^{\frac{1}{4}} v_n dx \le \|\tilde h_n\| \|v_n\| = \|\tilde h_n\|\|w\| \to 0. 
$$
Moreover, noting that $v_n$ is supported in $(\xi_{i,n}-R \de_{i,n},\xi_{i,n}+\de_{i,n} R)$ for some $R>0$, we have 
$$\begin{aligned}
\int_{\R}   f_n'(\wn) \psi_n v_n dx  & = \int_{\xi_{i,n}-R \de_{i,n}}^{\xi_{i,n}+R\de_{i,n}} e^{U_{i,n}} (1+O(|x-\xi_{i,n}|)+O(\lambda_n)) \psi_n v_n \\
& = \int_{-R}^R \frac{2}{1+y^2} ( 1+ O(\lambda_n(|y|+1))) \Psi_{i,n} w \, dy \\
& \to \int_{\R} \frac{2}{1+y^2} \Psi_\infty w \, dy,
\end{aligned}
$$
where the convergence in the last line follows by the convergence of $\Psi_{i,n}$ in $\mathcal{L}$. 
Then, it follows that $\Psi_{\infty}$ is a solution in $\mathcal{H}$ to the problem 
$$
(-\Delta)^\frac{1}{2} \Psi_{\infty} = \frac{2}{1+x^2} \Psi_{\infty} \quad \text { in } \R.
$$
Then, by Lemma \ref{nondeg}, there exist $\kappa_0,\kappa_1 \in \R$ such that  $\Psi_{\infty} = \kappa_0 Z_0 + \kappa_1 Z_1.$ But using again the convergence in $\mathcal L$ and recalling that $\psi_n \in K_{\Vde_{i,n},\Vxi_{i,n}}^\perp$, we have 
$$\begin{aligned}
0 = \int_{\R} (-\Delta)^\frac{1}{4} \psi_n (-\Delta)^\frac{1}{4} P Z_{1,i,n}\, dx & =  \int_{\R}  \psi_n e^{U_{i,n}} Z_{1,i,n} \, dx \\
& = \int_{\R}  \frac{2\Psi_{i,n} Z_1}{1+y^2} dy \\
%& \to  \int_{\R} \frac{2\Psi_{\infty} Z_1}{1+y^2} dy \\
& \to  \kappa_0\int_{\R} \frac{2Z_0 Z_1}{1+y^2} dy + \kappa_1\int_{\R} \frac{2Z_1^2}{1+y^2} dy\\
& = \pi \kappa_1,
\end{aligned}$$
Hence, $\kappa_1=0$.  Similarly, thanks to Step 4,  we know that 
$$\begin{aligned}
0 
%= \lim_{n\to \infty} \int_{\R}  f_n'(\wn) U_{\delta_{i,n}, \xi_{i,n}} \psi_n & = \lim_{n\to \infty}  \int_{\R}  \psi_n e^{U_{\de_{i,n},\xi_{i,n}}} U_{{\de_{i,n},\xi_{i,n}}} dx \\
%& 
&= \lim_{n\to \infty}  \int_{\R}  \psi_n e^{U_{\de_{i,n},\xi_{i,n}}}  ( U_{{\de_{i,n} ,\xi_{i,n}}} +\log \de_{i,n}) dx \\
& =\lim_{n\to \infty} \int_{\R}  \frac{2\Psi_{i,n}}{1+y^2} \log \left( \frac{2}{1+y^2} \right) dy \\
%& \to \int_{\R}  \frac{2\Psi_{\infty}}{1+y^2} \log \left( \frac{2}{1+y^2} \right) dy \\
& =  \kappa_0\int_{\R}  \frac{2Z_0}{1+y^2} \log \left( \frac{2}{1+y^2} \right) dy \\
& = \pi \kappa_0,
\end{aligned}$$
which implies $\kappa_0=0$ and $\Psi_\infty \equiv 0$.

\medskip
{\bf Step 6}: Conclusion of the proof.
We know that
$$
1+o(1) = \int_{\R} f_n'(\wn) \psi_n^2 dx  
 = \sum_{i=1}^k \int_{\xi_{i,n}-\frac{\eta}{2}}^{\xi_{i,n}+\frac{\eta}{2}} e^{U_{i,n}} \psi_n^2\,dx + O(\sqrt{\lambda_n})$$
by \eqref{convto1}, Lemma \ref{ExpDer} and Step 2. But, using step 5, one gets
$$\begin{aligned}
\int_{\xi_{i,n}-\frac{\eta}{2}}^{\xi_{i,n}+\frac{\eta}{2}} e^{U_{i,n}} \psi_n^2\,dx\le \int_{\R} e^{U_{i,n}} \psi_n^2 dx =    \int_{\R} \frac{2\Psi_{i,n}^2}{1+y^2} dy   \to 0,
\end{aligned} $$
for any $i=1,\ldots,k$. This gives  a contradiction. 
\end{proof}

The a-priori estimates of Lemma \ref{bound_psi} imply the following invertibility property. 

\begin{cor}\label{lemmaA}
For $\lambda\in (0,\bar \lambda)$ and $\Vxi \in \mathcal P_{k,\eta}$, the operator $A:=\pi^{\perp} L:K_{\Vde,\Vxi}^\perp\rightarrow K_{\Vde,\Vxi}^\perp$ is invertible and 
$$\|A^{-1}\|_{\mathcal{L}(K_{\Vde,\Vxi}^\perp)}=O(|\log \lambda|),$$
where $\|F\|_{\mathcal{L}(K_{\Vde,\Vxi}^\perp)}:=\sup_{\{h\in K_{\Vde,\Vxi}^\perp,\ \|h\|\leq 1\}}\|Fh\|.$
\end{cor}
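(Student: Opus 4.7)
The proof will deduce the Corollary directly from the a-priori estimate in Lemma \ref{bound_psi} combined with Fredholm theory. First, the estimate immediately gives injectivity of $A$: if $\psi \in K_{\Vde,\Vxi}^\perp$ satisfies $A\psi = \pi^\perp L\psi = 0$, then $\|\psi\| \le C|\log \lambda|\,\|\pi^\perp L\psi\| = 0$. Once surjectivity is also established, applying the estimate to $\psi := A^{-1}h$ for arbitrary $h\in K_{\Vde,\Vxi}^\perp$ gives $\|A^{-1}h\| \le C|\log \lambda|\,\|h\|$, from which the claimed bound $\|A^{-1}\|_{\mathcal L(K_{\Vde,\Vxi}^\perp)} = O(|\log \lambda|)$ follows at once.

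The remaining task is surjectivity, which I plan to establish by recognizing $A$ as a compact perturbation of the identity on $K_{\Vde,\Vxi}^\perp$. Writing $L = I - T$ with $T\psi := (-\Delta)^{-\frac{1}{2}}(\f'(\w)\psi)$, one has $A = I|_{K_{\Vde,\Vxi}^\perp} - \pi^\perp T|_{K_{\Vde,\Vxi}^\perp}$. The key technical step is the compactness of $T: X^\frac{1}{2}_0(I) \to X^\frac{1}{2}_0(I)$. To prove it, fix any $p \in (1,\infty)$ and choose exponents satisfying $\tfrac{1}{p} = \tfrac{1}{q} + \tfrac{1}{r}$; H\"older's inequality yields
\[
\|\f'(\w)\psi\|_{L^p(I)} \le \|\f'(\w)\|_{L^q(I)}\,\|\psi\|_{L^r(I)},
\]
and Lemma \ref{ExpDer} (together with \eqref{gen Est} applied near each $\xi_i$) guarantees $\f'(\w) \in L^q(I)$. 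If $\psi_n \rightharpoonup \psi$ weakly in $X^\frac{1}{2}_0(I)$, then by the compact Sobolev embedding $X^\frac{1}{2}_0(I) \hookrightarrow L^r(I)$ we get $\psi_n \to \psi$ strongly in $L^r(I)$, so $\f'(\w)\psi_n \to \f'(\w)\psi$ in $L^p(I)$, and the continuity estimate \eqref{EllEst} upgrades this to strong convergence $T\psi_n \to T\psi$ in $X^\frac{1}{2}_0(I)$. Hence $T$ is compact, and composition with the bounded projection $\pi^\perp$ preserves compactness, so $\pi^\perp T|_{K_{\Vde,\Vxi}^\perp}$ is compact as well.

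Consequently $A$ is Fredholm of index zero on $K_{\Vde,\Vxi}^\perp$, and the injectivity already established forces surjectivity by the Fredholm alternative. The main technical obstacle in this scheme is the compactness of $T$, but that is essentially a consequence of Lemma \ref{ExpDer} together with the one-dimensional fractional Rellich embedding; everything else is formal bookkeeping built on Lemma \ref{bound_psi}.
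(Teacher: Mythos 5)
Your proposal is correct and follows essentially the same route as the paper: injectivity from the a-priori bound of Lemma \ref{bound_psi}, invertibility via the Fredholm alternative since $A$ is the identity on $K_{\Vde,\Vxi}^\perp$ plus a compact operator, and the norm bound by applying the estimate to $A^{-1}h$. The only difference is that you spell out the compactness of $\psi\mapsto(-\Delta)^{-\frac{1}{2}}(\f'(\w)\psi)$ (via H\"older, Lemma \ref{ExpDer} and the compact embedding of $X^{\frac12}_0(I)$ into $L^r(I)$), which the paper asserts without detail.
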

\begin{proof} 
By Lemma \ref{bound_psi}, for any $\ph\in K_{\Vde,\Vxi}^\perp$, %setting $h:=A\ph$, 
we have
$$
\| \ph\|\leq C |\log\lambda| \|A\ph\|.$$ In particular, $A$ is injective. Since $K_{\Vde,\Vxi}^\perp$ is a Hilbert space, and since $A$ is  a Fredholm operator of index $0$ (indeed it decomposes as the identity of $K_{\Vde,\Vxi}^\perp$ plus a compact operator), we can assert that $A$ is invertible. Moreover, we have 
\begin{equation*}
\begin{split}
\|A^{-1}\|_{\mathcal{L}(K_{\Vde,\Vxi}^\perp)}&=\sup_{\{h\in K_{\Vde,\Vxi}^\perp,\ \|h\|\leq 1\}}\|A^{-1}h\|
%=\sup_{\{h\in K_{\Vde,\Vxi}^\perp,\ \|h\|\leq 1\}}\|\ph\|\\
\leq C |\log \lambda|.
\end{split}
\end{equation*}
\end{proof}

\section{Fix point argument}\label{Sec:Fix}

%Let ${L}$ be the operator defined by \eqref{DefTildeL}. After projecting over the Kernel  $K_{\de,\xi}= <\{P Z_{1,j}, j\in \{1,\ldots k\} \}>$, equation \eqref{proj1} provides us with the two following equations
%\begin{equation}\label{proj1_fix}
%\pi^{\perp} L\ph =  \pi^{\perp}(-\Delta)^{-\frac{1}{2}}  E +  \pi^{\perp}(-\Delta)^{-\frac{1}{2}}  N(\ph). 
%\end{equation}
%
%\begin{equation}\label{proj2_fix}
%\pi L\ph =  \pi(-\Delta)^{-\frac{1}{2}}  E + \pi (-\Delta)^{-\frac{1}{2}}  N(\ph). 
%\end{equation}

As we have outlined in Section \ref{Sec:Prelim}, equation \eqref{Eq} can be reduced to the  couple of nonlinear problems \eqref{proj1}-\eqref{proj2}. With the notation of the previous section, let us consider the operator $A=\pi^\perp  L: K_{\Vde,\Vxi}^\perp \rightarrow K_{\Vde,\Vxi}^\perp$. Thanks to Corollary \ref{lemmaA}, we can rewrite equation \eqref{proj1} as 

$$
\ph =A^{-1}\left(\pi^{\perp}(-\Delta)^{-\frac{1}{2}}  E +  \pi^{\perp}(-\Delta)^{-\frac{1}{2}}  N(\ph)\right). 
$$

We now prove that this equation admits a solution for any small $\lambda$ and any  $\Vxi \in \mathcal{P}_{k,\eta}$.

\begin{lemma}\label{LemmaFix}
Let $p\in (1,2)$ be fixed. Then, there exist {$\kappa=\kappa(p,\eta)>0$ and $\lambda_0 = \lambda_0(p,\eta)>0$} such that, for any $\Vxi \in\mathcal P_{k,\eta}$ and $\lambda \in (0,\lambda_0)$,  the operator $$T(\ph):=A^{-1}\left(\pi^{\perp}(-\Delta)^{-\frac{1}{2}}  E +  \pi^{\perp}(-\Delta)^{-\frac{1}{2}}  N(\ph)\right)$$ has a fixed point on 
\begin{equation}\label{B}
B:=\{\ph \in K_{\Vde,\Vxi}^\perp\;:\;\ \|\ph\|\leq 
%2C_AC(p)C_E.
\kappa|\log \lambda|\lambda^{1/p} \}.\end{equation}
\end{lemma}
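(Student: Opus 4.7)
The plan is to apply the Banach contraction mapping theorem to $T$ on the closed ball $B$. The argument consists of two steps: first, showing that $T$ maps $B$ into itself for suitable $\kappa$ and small $\lambda$; second, showing that $T$ is a contraction on $B$. Both steps combine the estimates already established in the paper: the operator norm bound $\|A^{-1}\|_{\mathcal L(K_{\Vde,\Vxi}^\perp)}=O(|\log\lambda|)$ from Corollary \ref{lemmaA}, the continuity bound $\|(-\Delta)^{-1/2}f\|\le C(p)\|f\|_{L^p(I)}$ from \eqref{EllEst}, the error estimate $\|E\|_{L^p(I)}=O(\lambda^{1/p})$ from Lemma \ref{Est_E}, the quadratic Lipschitz estimate for $N$ from Lemma \ref{est_L}, and the obvious bound $\|\pi^\perp u\|\le\|u\|$.

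Since $p\in(1,2)$, I would first fix an auxiliary exponent $s>p$ with the additional property $s<\frac{p}{p-1}$. Such a choice is possible precisely because $p<2$, and it guarantees that the exponent
\[
\alpha := \tfrac{1}{p}+\tfrac{1}{s}-1
\]
is strictly positive, so that $\lambda^{\alpha}|\log\lambda|^{2}\to 0$ as $\lambda\to 0^+$. To prove $T(B)\subseteq B$, I would use $N(0)=0$ together with Lemma \ref{est_L} to get
\[
\|N(\ph)\|_{L^p(I)}\le C_{p,s,\eta}\lambda^{1/s-1}\|\ph\|^2,\quad \ph\in B,
\]
and then combine with $\|E\|_{L^p(I)}=O(\lambda^{1/p})$, \eqref{EllEst} and Corollary \ref{lemmaA} to deduce
\[
\|T(\ph)\|\le C\,|\log\lambda|\bigl(\lambda^{1/p}+\lambda^{1/s-1}\|\ph\|^2\bigr)\le C_1|\log\lambda|\lambda^{1/p}\bigl(1+\kappa^2|\log\lambda|^{2}\lambda^{\alpha}\bigr).
\]
Choosing $\kappa\ge 2C_1$ and then $\lambda_0$ so small that $C_1\kappa^{2}|\log\lambda|^{2}\lambda^{\alpha}\le \tfrac12\kappa$ for $\lambda\in(0,\lambda_0)$, I conclude $\|T(\ph)\|\le\kappa|\log\lambda|\lambda^{1/p}$.

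For the contraction property, Lemma \ref{est_L} applied to $\ph_1,\ph_2\in B$ gives
\[
\|N(\ph_1)-N(\ph_2)\|_{L^p(I)}\le C_{p,s,\eta}\lambda^{1/s-1}(\|\ph_1\|+\|\ph_2\|)\|\ph_1-\ph_2\|\le 2\kappa\, C_{p,s,\eta}\,|\log\lambda|\lambda^{1/s-1+1/p}\|\ph_1-\ph_2\|,
\]
and combining once more with Corollary \ref{lemmaA} and \eqref{EllEst} yields
\[
\|T(\ph_1)-T(\ph_2)\|\le C_2\,|\log\lambda|^{2}\lambda^{\alpha}\|\ph_1-\ph_2\|.
\]
Since $\alpha>0$, after possibly shrinking $\lambda_0$ the factor $C_2|\log\lambda|^{2}\lambda^{\alpha}$ is less than $\tfrac12$ for every $\lambda\in(0,\lambda_0)$, so $T$ is a strict contraction of $B$ into itself. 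Banach's fixed point theorem then delivers the unique fixed point $\ph_{\lambda,\Vxi}\in B$.

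The only delicate point is the choice of the exponents: one must pick $s>p$ in Lemma \ref{est_L} so that the net exponent $1/s-1+1/p$ is strictly positive, which is where the hypothesis $p<2$ enters in an essential way. Everything else is a direct consequence of the previously established linear invertibility, the Lipschitz control on $N$, and the smallness of the error $E$.
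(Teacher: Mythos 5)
Your proposal is correct and follows essentially the same route as the paper: the bounds from Corollary \ref{lemmaA}, \eqref{EllEst}, Lemma \ref{Est_E} and Lemma \ref{est_L}, the choice $s\in(p,\frac{p}{p-1})$ making $\lambda^{\frac1p+\frac1s-1}|\log\lambda|^2\to 0$, and the contraction mapping theorem on $B$. The only point the paper makes explicit that you leave implicit is that $\lambda_0$ must also be small enough that the radius $\kappa|\log\lambda|\lambda^{1/p}\le 1$, so that Lemma \ref{est_L} (which assumes $\|\ph_i\|\le 1$) is applicable to elements of $B$.
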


{\begin{proof}
By Corollary \ref{lemmaA}, Lemma \ref{Est_E}, estimate \eqref{EllEst}, and the fact that the projection $\pi^\perp$ reduces the norm, we can find constants $C_A$ and $C_E$ such that  
\begin{equation}\label{CACE}
\|A^{-1}\|_{\mathcal{L}(K_{\Vde,\Vxi}^\perp)}\leq C_A|\log \lambda| \quad \text{ and } \quad  \|\pi^\perp (-\Delta)^\frac{1}{2}E\| \le C_E \lambda^\frac{1}{p}. 
\end{equation}
for any $\Vxi \in \mathcal P_{k,\eta}$ and any small $\lambda$.
Note that $C_A$ and $C_E$ do not depend neither on  $\Vxi$ nor on $\lambda$.  Similarly, for any $s>p$, Lemma \ref{est_L} and estimate \eqref{EllEst} imply the existence of a constant $C_N$, depending only on $s,p,\eta$ such that 
\begin{equation}\label{CN}
\|\pi^\perp(-\Delta)^\frac{1}{2} (N(\ph_1)-N(\ph_2))\|\le C_N \lambda^{\frac{1}{s}-1} |\log \lambda| \|\ph_1-\ph_2\|(\|\ph_1\| + \|\ph_2\|),
\end{equation}
for any $\Vxi \in \mathcal P_{k,\eta}$, $\lambda$ small enough and any $\ph_1,\ph_2\in X^\frac{1}{2}_0(I)$ with $\|\ph_i\|\le 1$ for $i=1,2$. 

Let us set $\kappa:=2C_AC_E$. We shall prove that $T$ is a contraction on $B$. First, taking $\lambda$ small enough so that $\kappa \lambda^\frac{1}{p}|\log \lambda|\le 1$,  we get $\|\ph\|\le 1$ for any $\ph \in B$. Hence, \eqref{CACE}  and \eqref{CN} give
\[
\begin{split}
\|T(\ph)\|
& \leq 
\|A^{-1}\|_{\mathcal{L}(K_{\Vde,\Vxi}^\perp)}\|\pi^{\perp}(-\Delta)^{-\frac{1}{2}}  E\| + \|A^{-1}\|_{\mathcal{L}(K_{\Vde,\Vxi}^\perp)} \|\pi^{\perp}(-\Delta)^{-\frac{1}{2}}  N(\ph)\|\\
&\le C_A C_E \lambda^\frac{1}{p}|\log \lambda| + C_A C_N \lambda^{\frac{1}{s}-1}\|\ph\|^2\\
& \le C_A C_E \lambda^\frac{1}{p}|\log \lambda| \left( 1 +4C_A^2C_E C_N  \lambda^{\frac{1}{p}+\frac{1}{s}-1}|\log \lambda| \right),
\end{split}
\]
where last inequality follows from the definition of $B$ in \eqref{B} and our choice of $\kappa$.  Since $p\in (1,2)$, it is enough to  take $s\in(p,\frac{p}{p-1})$ and $\lambda$  such that $4C_A^2C_EC_N |\log\lambda|^2\lambda^{\frac{1}{p}+\frac{1}{s}-1}\le 1$ to get $T(\ph)\in B$, $\forall\, \ph\in B.$ 

Arguing as above we now prove that $\|T\ph_1-T\ph_2\|\leq \frac{1}{2} \|\ph_1-\ph_2\|$, for all $\ph_1,\ph_2\in B$. Indeed, thanks to \eqref{CN}, it is sufficient to choose $\lambda$ small enough such that $$4C_A^2C_E C_N|\log \lambda|^2\lambda^{\frac{1}{p}+\frac{1}{s}-1}\leq \frac{1}{2},$$
to get
$$\begin{aligned}
\|T(\ph_1)-T(\ph_2)\|& = \|A^{-1}\pi^{\perp}(-\Delta)^{-\frac{1}{2}}  \left(N(\ph_1)-N(\ph_2)\right)\|
\\ &\le  C_A|\log\lambda| C_N\lambda^{\frac{1}{s}-1} \|\ph_1-\ph_2\| (\|\ph_1\|+\|\ph_2\|) \\
& \le \underbrace{4C_A^2C_E C_N|\log \lambda|^2\lambda^{\frac{1}{p}+\frac{1}{s}-1}}_{\le \frac{1}{2}} \|\ph_1-\ph_2\|.
\end{aligned}
$$ 
Thus we have proved that $T$ is a contraction on the ball $B$, so it has a unique fix point in $B$.
\end{proof} }

For $\Vxi\in \mathcal P_{k,\eta}$ and $\lambda$ small enough, let $\ph_{\lambda,\xi}$ be the fix point for the operator $T$ constructed in Lemma \ref{LemmaFix}. By definition,  $\ph_{\lambda,\xi}$ satisfies \eqref{proj1}. Then, since $K_{\Vde,\Vxi}$ is spanned by $PZ_{1,1},\ldots,PZ_{1,k}$, 
%the fixed point $\ph$ constructed in Lemma \ref{LemmaFix} is a weak solution to 
%\begin{equation}\label{EqPhi}
%\begin{aligned}
%(-\Delta)^{\frac{1}{2}} \ph_{\xi}-\f'(\omega_{a,\delta,\xi})\ph_{\xi}& =E+N(\ph_{\xi})+\sum_{i=1}^k(-\Delta)^{\frac{1}{2}}c_iPZ_{1,i} \\ & =E+N(\ph_{\xi})+\sum_{i=1}^k c_i  e^{U_{\de_i,\xi_i}}Z_{1,i} , 
%\end{aligned}
%\end{equation}
%More precisely,
as a consequence of Lemma \ref{LemmaFix}, we get the following proposition:

\begin{prop}\label{PropPhi}
Fix $p\in (0,1)$ and let $\lambda_0$ and $\kappa$ be as in Lemma \ref{LemmaFix}. Then, for any $\lambda\in (0,\lambda_0)$ and any $\Vxi \in \mathcal{P}_{k,\eta}$, there exists a unique function $\ph_{\lambda,\Vxi}\in K_{\de,\Vxi}^\perp$ such that $\|\ph_{\lambda,\Vxi}\|\le \kappa \lambda^\frac{1}{p}|\log \lambda|$ and such that we can find $k$ coefficients $c_i = c_i(\lambda,\Vxi),$ $i=1,...,k$, such that 
\begin{equation}\label{EqPhi}
\begin{aligned}
(-\Delta)^{\frac{1}{2}} \ph_{\lambda,\xi}-\f'(\omega_{a,\delta,\xi})\ph_{\lambda,\xi}& =E+N(\ph_{\lambda,\xi})+\sum_{i=1}^k c_i  e^{U_{\de_i,\xi_i}}Z_{1,i}.
\end{aligned}
\end{equation}
Moreover, by definition of $E$, $N$ and $ L$, we also have 
\begin{equation}\label{EqSum}
(-\Delta)^\frac{1}{2} (\w + \ph_{\xi,\Vde}) =  \f (\w + \ph_{\xi,\Vde}) +  \sum_{i=1}^k c_i  e^{U_{\de_i,\xi_i}}Z_{1,i}.
\end{equation}
\end{prop}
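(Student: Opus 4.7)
The plan is to deduce the proposition directly from Lemma \ref{LemmaFix} and then translate the abstract fixed-point identity into the concrete PDE \eqref{EqPhi} by unwinding the definitions of $L$, $E$, and $N$.

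First, I would fix $p\in(1,2)$ and apply Lemma \ref{LemmaFix} to obtain $\kappa=\kappa(p,\eta)$ and $\lambda_0=\lambda_0(p,\eta)$ such that, for every $\lambda\in(0,\lambda_0)$ and every $\Vxi\in\mathcal P_{k,\eta}$, the operator $T$ has a fixed point $\ph_{\lambda,\Vxi}$ in the ball $B$ defined in \eqref{B}. Uniqueness is built into Lemma \ref{LemmaFix}: since $T$ is a strict contraction on $B$, its fixed point is unique in $B$, and automatically $\ph_{\lambda,\Vxi}\in K_{\Vde,\Vxi}^\perp$ and satisfies the required bound $\|\ph_{\lambda,\Vxi}\|\le \kappa\lambda^{1/p}|\log\lambda|$.

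Next, I would unwind the fixed-point identity. By the definition of $T$, applying $A$ to both sides of $\ph_{\lambda,\Vxi}=T(\ph_{\lambda,\Vxi})$ gives
\[
\pi^\perp L\,\ph_{\lambda,\Vxi}=\pi^\perp(-\Delta)^{-\frac{1}{2}}E+\pi^\perp(-\Delta)^{-\frac{1}{2}}N(\ph_{\lambda,\Vxi}).
\]
Equivalently,
\[
L\,\ph_{\lambda,\Vxi}-(-\Delta)^{-\frac{1}{2}}\bigl(E+N(\ph_{\lambda,\Vxi})\bigr)\in K_{\Vde,\Vxi}.
\]
Since $K_{\Vde,\Vxi}$ is spanned by $PZ_{1,1},\dots,PZ_{1,k}$, there exist coefficients $c_1,\dots,c_k$ (depending on $\lambda$ and $\Vxi$) with
\[
L\,\ph_{\lambda,\Vxi}=(-\Delta)^{-\frac{1}{2}}\bigl(E+N(\ph_{\lambda,\Vxi})\bigr)+\sum_{i=1}^{k}c_i\,PZ_{1,i}.
\]
Applying $(-\Delta)^{\frac{1}{2}}$ and recalling that $L=\mathrm{Id}-(-\Delta)^{-\frac{1}{2}}f_\lambda'(\w)$ together with the identity $(-\Delta)^{\frac{1}{2}}PZ_{1,i}=e^{U_{\de_i,\xi_i}}Z_{1,i}$, I obtain exactly \eqref{EqPhi}.

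Finally, I would derive \eqref{EqSum} by substituting the definitions of $E=f_\lambda(\w)-(-\Delta)^{\frac{1}{2}}\w$ and $N(\ph)=f_\lambda(\w+\ph)-f_\lambda(\w)-f_\lambda'(\w)\ph$ into \eqref{EqPhi}: the linear term $f_\lambda'(\w)\ph_{\lambda,\Vxi}$ cancels and $f_\lambda(\w)$ telescopes with $-f_\lambda(\w)$, giving
\[
(-\Delta)^{\frac{1}{2}}(\w+\ph_{\lambda,\Vxi})=f_\lambda(\w+\ph_{\lambda,\Vxi})+\sum_{i=1}^{k}c_i\,e^{U_{\de_i,\xi_i}}Z_{1,i}.
\]
There is no real obstacle here since all the analytical difficulty (the contraction estimate, the invertibility of $A$, and the error bounds) has already been absorbed into Lemma \ref{LemmaFix} and Corollary \ref{lemmaA}; the only care needed is to make sure the two equations \eqref{EqPhi} and \eqref{EqSum} are stated correctly after applying $(-\Delta)^{\frac{1}{2}}$, which is straightforward since $PZ_{1,i}\in X_0^{1/2}(I)$ and the representation formula for $(-\Delta)^{-\frac{1}{2}}$ makes the transition rigorous.
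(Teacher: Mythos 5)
Your proposal is correct and follows essentially the same route as the paper, which simply records the proposition as a direct consequence of Lemma \ref{LemmaFix}: the fixed point satisfies \eqref{proj1}, the defect lies in $K_{\Vde,\Vxi}=\langle PZ_{1,1},\ldots,PZ_{1,k}\rangle$, and applying $(-\Delta)^{\frac{1}{2}}$ (using $(-\Delta)^{\frac{1}{2}}PZ_{1,i}=e^{U_{\de_i,\xi_i}}Z_{1,i}$) and substituting the definitions of $E$ and $N$ yields \eqref{EqPhi} and \eqref{EqSum}; the stated ``$p\in(0,1)$'' is a typo for $p\in(1,2)$, as you correctly assumed. For uniqueness, just note explicitly the converse step you left implicit: any $\ph\in K_{\Vde,\Vxi}^\perp$ within the ball satisfying \eqref{EqPhi} becomes, after applying $\pi^\perp(-\Delta)^{-\frac{1}{2}}$, a fixed point of $T$ in $B$, so uniqueness in $B$ from the contraction gives the claim.
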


\begin{rem} By testing the equation \eqref{EqPhi} against $PZ_{1,i}$, $i=1,\ldots,k$, we get that 
\[
c_i(\lambda,\Vxi) = -\sum_{j=1}^k b^{ij} \int_{\R} \left( \f'(\w)\ph_{\lambda,\xi}+E +N(\ph_{\lambda,\xi})\right)PZ_{1,i}\, dx ,
\]
where the $b^{ij}=b^{ij}(\lambda,\xi)$ are the coefficients of the inverse of the matrix $(b_{ij})_{1\le i,j\le k}$ with 
$$
b_{ij} = \int_{\R} e^{U_{\de_i,\xi_i}}Z_{1,i}PZ_{1,j} \,dx .
$$
The matrix $(b_{ij})_{1\le i,j\le k}$ is symmetric and invertible by Remark \ref{Orth}.  
\end{rem}

We conclude this section by proving the regularity of $\pl$ with respect to $\Vxi$. From now on, with some abuse of notation we will use the notation $\lambda_0$ to refer different constants possibly smaller than the one given by Lemma \ref{LemmaFix} and Proposition \ref{PropPhi}.

\begin{lemma}
For any $\lambda \in (0,\lambda_0)$, the map $\Vxi\to \ph_{\lambda,\Vxi}$ is a $C^1$ map from $\mathcal P_{k,\eta}$ into $X_0^\frac{1}{2}(I)$. 
\end{lemma}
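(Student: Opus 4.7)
The plan is to apply the Implicit Function Theorem to a functional equation whose unique small solution is $(\pl, c(\lambda,\Vxi))$, thereby transferring $C^1$ regularity in $\Vxi$ from the functional to the solution.

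\textbf{Setup.} I introduce the map $\mathcal{F}:\mathcal{P}_{k,\eta}\times X_0^{\frac{1}{2}}(I)\times\R^k \to X_0^{\frac{1}{2}}(I)\times\R^k$ defined by
\[
\mathcal{F}(\Vxi,\ph,\mathbf{c}) := \left( L\ph - (-\Delta)^{-\frac{1}{2}}\bigl[E + N(\ph)\bigr] - \sum_{i=1}^k c_i\, PZ_{1,i},\ \bigl(\langle \ph, PZ_{1,j}\rangle\bigr)_{1\le j\le k}\right),
\]
where the $\Vxi$-dependence enters through $\Vde=\Vde(\Vxi)$ (via \eqref{delta}), $\w$, $E$, $N$ and the basis $\{PZ_{1,i}\}$. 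By \eqref{EqPhi} together with $\pl\in K_{\Vde,\Vxi}^\perp$, we have $\mathcal{F}(\Vxi,\pl,c(\lambda,\Vxi))=0$.

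\textbf{Step 1: $C^1$ smoothness of $\mathcal{F}$.} The map $\Vxi\mapsto\Vde(\Vxi)$ is $C^\infty$, since $H$ and $G$ are smooth on $\mathcal{P}_{k,\eta}$. Differentiating the closed-form expressions \eqref{U_dt} and \eqref{Z} pointwise with respect to $\xi_i$ and $\delta_i$, and then projecting, one verifies that $\Vxi\mapsto PU_{\delta_i,\xi_i}$ and $\Vxi\mapsto PZ_{1,i}$ are $C^1$ into $X_0^{\frac{1}{2}}(I)\cap L^\infty(I)$; the explicit candidate derivatives can be controlled exactly as in Lemmas \ref{PU} and \ref{PZ}. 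Composition with $f_\lambda$ and $f_\lambda'$ then yields $C^1$ dependence of $E$ and $N$ as maps into $L^p(I)$ for any $p\in(1,+\infty)$, using the scaling estimate \eqref{gen Est} together with the arguments of Lemmas \ref{Est_E}, \ref{est_L} and \ref{ExpDer}. Smoothness of $(-\Delta)^{-1/2}:L^p(I)\to X_0^{\frac{1}{2}}(I)$ and of the scalar products $\langle\cdot,PZ_{1,j}\rangle$ is immediate.

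\textbf{Step 2: Invertibility of $D_{(\ph,\mathbf{c})}\mathcal{F}$.} At the fixed point the partial derivative reads
\[
(\psi,\mathbf{d})\longmapsto\left( L\psi - (-\Delta)^{-\frac{1}{2}}\mathcal{R}\psi - \sum_{i=1}^k d_i\, PZ_{1,i},\ (\langle\psi,PZ_{1,j}\rangle)_j\right),\qquad \mathcal{R}:=f_\lambda'(\w+\pl)-f_\lambda'(\w).
\]
Given target data $(v,\mathbf{r})$, I decompose $\psi=\psi^\parallel+\psi^\perp$ with $\psi^\parallel\in K_{\Vde,\Vxi}$ and $\psi^\perp\in K_{\Vde,\Vxi}^\perp$. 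The orthogonality block uniquely determines $\psi^\parallel$, since the Gram matrix $\langle PZ_{1,i},PZ_{1,j}\rangle=\pi\delta_{ij}+O(\lambda)$ from Remark \ref{Orth} is invertible for $\lambda$ small. Applying $\pi^\perp$ to the first component annihilates the $d_i$-sum and leads to
\[
\bigl(A-\pi^\perp(-\Delta)^{-\frac{1}{2}}\mathcal{R}\bigr)\psi^\perp = \pi^\perp v-\pi^\perp L\psi^\parallel+\pi^\perp(-\Delta)^{-\frac{1}{2}}\mathcal{R}\psi^\parallel.
\]
By Remark \ref{similar} applied to $\ph=\pl$ (whose norm is $O(\lambda^{1/p}|\log\lambda|)$ by Proposition \ref{PropPhi}), combined with \eqref{EllEst} and Sobolev's embedding, the operator $\pi^\perp(-\Delta)^{-\frac{1}{2}}\mathcal{R}$ has operator norm $o(|\log\lambda|^{-1})$ on $K_{\Vde,\Vxi}^\perp$; since $\|A^{-1}\|=O(|\log\lambda|)$ by Corollary \ref{lemmaA}, a Neumann-series argument gives invertibility of $A-\pi^\perp(-\Delta)^{-\frac{1}{2}}\mathcal{R}$ for $\lambda$ small. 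This determines $\psi^\perp$, and projecting the first component onto $K_{\Vde,\Vxi}$ recovers $\mathbf{d}$ uniquely through the Gram matrix.

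\textbf{Conclusion and main obstacle.} The Implicit Function Theorem applied to $\mathcal{F}$ locally around any $\Vxi_0\in\mathcal{P}_{k,\eta}$ produces a $C^1$ branch of solutions which, by the uniqueness statement of Lemma \ref{LemmaFix} and Proposition \ref{PropPhi}, must coincide with $\Vxi\mapsto(\pl,c(\lambda,\Vxi))$. In particular $\Vxi\mapsto\pl$ is $C^1$ from $\mathcal{P}_{k,\eta}$ into $X_0^{\frac{1}{2}}(I)$. The main technical point is Step 1: justifying differentiability of the strongly concentrated terms $f_\lambda(\w)$ and $f_\lambda'(\w)\ph$ as $L^p(I)$-valued functions of $\Vxi$, since $\w$ blows up near each $\xi_i$ as $\lambda\to 0$. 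This is handled by the same scaling change of variable $y=(x-\xi_i)/\delta_i$ used throughout Section \ref{Sec:E}, combined with \eqref{gen Est}.
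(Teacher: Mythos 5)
Your proposal is correct and follows essentially the same route as the paper: both characterize $\ph_{\lambda,\Vxi}$ as the zero of a $C^1$ functional in $(\Vxi,\ph)$ and apply the implicit function theorem, with invertibility of the linearization at the solution coming from Corollary \ref{lemmaA} combined with the smallness of $f_\lambda'(\w+\pl)-f_\lambda'(\w)$ obtained from Remark \ref{similar}, \eqref{EllEst} and Sobolev embedding, exactly as in the paper. The only difference is bookkeeping: you use the augmented system in $(\ph,\mathbf{c})$ with explicit orthogonality constraints and invert the linearized operator by a block solve plus a Neumann series, whereas the paper encodes the constraint through the projection $\pi^{\perp}_{\lambda,\Vxi}$ in a single map $G_\lambda$ and deduces invertibility from a coercivity estimate together with the Fredholm-index-zero property; both are equivalent and rest on the same estimates.
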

\begin{proof}
For the study of the regularity of $\ph_{\lambda,\Vxi}$ it is important to recall that $\pi$, $\pi^\perp$, $ L$ and $T$ depend on $\lambda$ and $\Vxi$. For this reason, throughout this proof these operators will be denoted respectively by $\pi_{\lambda,\Vxi},\pi_{\lambda,\Vxi}^\perp$, $ L_{\lambda,\Vxi}$ and $T_{\lambda,\Vxi}$. 
For a fixed $\lambda\in (0,\lambda_0)$, let us consider the $C^1$ map $G_\lambda : \mathcal P_{k,\eta}\times X_0^\frac{1}{2}(I)\ra X_0^\frac{1}{2}(I)$ defined by 
 $$G_{\lambda} (\xi, \ph) =\ph +  \pi_{\lambda,\Vxi}^\perp \left[\w  -  (-\Delta)^{-\frac{1}{2}} \f(\w+\pi_{\lambda,\Vxi}^\perp \ph)\right].$$ Note that
$$
\begin{aligned}
\frac{\partial G_{\lambda}}{\partial \ph} (\Vxi,\ph)[v] &  = v -\pi^\perp_{\lambda,\Vxi} (-\Delta)^{-\frac{1}{2}} f_\lambda'(\w +\pi_{\lambda,\Vxi}^\perp \ph)\pi_{\lambda,\Vxi}^\perp v,  
\end{aligned}
$$
for any $v\in X^\frac{1}{2}_0(I)$. In particular, $\frac{\partial G_{\lambda}}{\partial \ph} (\Vxi,\ph)$ is a Fredholm operator of index 0 and thus, it is invertible if and only if it is injective. 
By definition, we have that  $G_{\lambda}(\xi,\ph)=0$ if and only if  $\ph \in K^\perp_{\Vde,\Vxi}$ is a fix point for  $T_{\lambda,\Vxi}$.  In particular $G_\lambda(\Vxi,\ph_{\lambda,\Vxi})=0$. Moreover,
$$
\begin{aligned}
\frac{\partial G_{\lambda}}{\partial \ph} (\Vxi,\pl)[v] &  := v -\pi^\perp_{\lambda,\Vxi} (-\Delta)^{-\frac{1}{2}} f_\lambda'(\w + \pl)\pi_{\lambda,\Vxi}^\perp v  \\
& = \pi_{\lambda,\Vxi} v  + \pi_{\lambda,\Vxi}^\perp  \left[ \pi_{\lambda,\Vxi}^\perp v -(-\Delta)^{-\frac{1}{2}} \left(f_\lambda'(\w +\pl)\pi_{\lambda,\Vxi}^\perp v\right) \right] \\
& = \pi_{\lambda,\Vxi} v  + \pi_{\lambda,\Vxi}^\perp   L_{\lambda,\Vxi} (\pi_{\lambda,\Vxi}^\perp  v) - \pi_{\lambda,\Vxi}^\perp  \left[(-\Delta)^{-\frac{1}{2}} \left(\big(f_\lambda'(\w +\pl)-f_\lambda'(\w)\big) \pi_{\lambda,\Vxi}^\perp v\right) \right].\\
\end{aligned}
$$
For any $p\in (1,2)$ and $s>p$ such that $\frac{1}{p}+\frac{1}{s}>1$, Remark \ref{similar} gives
$$
\|f_\lambda'(\w +\pl)-f_\lambda'(\w)\|_{L^p(I)} =O( \lambda^{\frac{1-s}{s}}\|\ph_{\lambda,\Vxi}\|)=O(\lambda^{\frac{1}{p}+\frac{1}{s}-1}|\log \lambda|).
$$
Hence, using Sobolev's inequality, we can find $\alpha>0$ such that  
$$
\left\|\pi_{\lambda,\Vxi}^\perp  \left[(-\Delta)^{-\frac{1}{2}} \left(\big(f_\lambda'(\w +\pl)-f_\lambda'(\w)\big) \pi_{\lambda,\Vxi}^\perp v \right)  \right]\right\|= O(\lambda^\alpha\|v\|). 
$$
Then, we have 
$$\begin{aligned}
\left\|\frac{\partial G_{\lambda}}{\partial \ph} (\Vxi,\pl)[v] \right\| & \ge  \| \pi_{\lambda,\Vxi} v  + \pi_{\lambda,\Vxi}^\perp  L_{\lambda,\Vxi} (\pi_{\lambda,\Vxi}^\perp  v) \|+ O(\lambda^\alpha\|v\|)  \\
& \ge \frac{1}{\sqrt{2}}\| \pi_{\lambda,\Vxi} v \| + \frac{1}{\sqrt{2}} \|\pi_{\lambda,\Vxi}^\perp   L_{\lambda,\Vxi} (\pi_{\lambda,\Vxi}^\perp  v) \|+ O(\lambda^\alpha\|v\|) \\
& \ge \frac{1}{\sqrt{2}}\|\pi_{\lambda,\Vxi}v  \| + \frac{c}{{\sqrt{2}}} |\log \lambda|^{-1}\|\pi^\perp_{\lambda,\Vxi}v  \| + O(\lambda^\alpha\|v\|)\\
 & \ge  c \left( \|\pi_{\lambda,\Vxi}v  \| +\|\pi^\perp_{\lambda,\Vxi}v  \|\right) + O(\lambda^\alpha\|v\|)\\
& \ge  (c + O(\lambda^\alpha))\|v\|. 
\end{aligned}
$$
This implies that $\frac{\partial G_{\lambda}}{\partial \ph} (\Vxi,\pl)[v]$ is invertible. Then, the implicit function theorem gives that  $\ph_{\lambda,\Vxi}$ is of class $C^1$. 
\end{proof}

\section{Choice of the concentration points}\label{Sec:Points}
{Let $\ph_{\lambda,\xi}$ be as in Proposition \ref{PropPhi}. It is clear that if we find $\Vxi=(\xi_1,...,\xi_k)$ (depending on $\lambda$) such that \begin{equation}\label{ci=0}
c_i(\lambda,\Vxi)=0,\ \forall i=1,...,k,
\end{equation} 
then the function $u_{\lambda}:=\w+\ph_{\lambda,\xi}$ is solution for our initial problem \eqref{Eq}. In this section, we will prove that \eqref{ci=0} is satisfied when $\Vxi$ is a critical point of the reduced energy functional 
\begin{equation}\label{reducedF}
\F_\lambda(\Vxi) := J_{\lambda}(\w +\ph_{\lambda,\Vxi}),
\end{equation}
where 
$$
J_{\lambda}(u):= \frac{1}{2} \|u\|^2 -  \int_{I} g_\lambda(u) dx,\quad \text{ with } \quad g_\lambda(t):= \int_0^t f_\lambda(s) ds.
$$ 
In order to prove this, we will need the following preliminary estimate.

\begin{lemma}\label{Lemma normw} Let $F_1,\ldots,F_k$ be as in \eqref{F_i}. As $\lambda \to 0$, we have
\begin{equation}\label{normw}
\|\w\|^2 = -4\pi k \log \lambda -2\pi  \sum_{i=1}^k F_i(\Vxi) + O(\lambda|\log \lambda|),
\end{equation}
uniformly with respect to $\Vxi \in \mathcal P_{k,\eta}$. 
\end{lemma}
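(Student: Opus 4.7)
The plan is to expand the squared norm as
$$\|\w\|^2 = \sum_{i=1}^k \|PU_{\de_i,\xi_i}\|^2 + \sum_{i\neq j} a_i a_j \langle PU_{\de_i,\xi_i}, PU_{\de_j,\xi_j}\rangle,$$
and compute each piece by exploiting the identity $\langle PU_{\de_i,\xi_i}, PU_{\de_j,\xi_j}\rangle = \int_I e^{U_{\de_i,\xi_i}} PU_{\de_j,\xi_j}\,dx$, which follows immediately from $PU_{\de,\xi} = (-\Delta)^{-1/2} e^{U_{\de,\xi}}$ and the weak formulation. The expansions of $PU_{\de,\xi}$ from Lemma \ref{PU} will then reduce everything to concrete integrals against the bubble density.

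For the diagonal piece, I would insert the near-$\xi_i$ line of Lemma \ref{PU} and perform the change of variable $y=(x-\xi_i)/\de_i$. Two standard integrals drive the computation: $\int_\R \frac{2}{1+y^2}\,dy=2\pi$ and $\int_\R \frac{2}{1+y^2}\log\tfrac{2}{1+y^2}\,dy=-2\pi\log 2$ (the second via $y=\tan\theta$ and the classical $\int_{-\pi/2}^{\pi/2}\log|\cos\theta|\,d\theta=-\pi\log 2$). The remainder term is controlled by the integral bound from the remark after Lemma \ref{Est_E} applied with $p=q=1$, yielding
$$\|PU_{\de_i,\xi_i}\|^2 = -4\pi\log(2\de_i)+4\pi^2 H(\xi_i,\xi_i)+O(\de_i|\log\de_i|).$$

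For the cross terms with $i\neq j$, the separation $|\xi_i-\xi_j|>\eta$ places $\xi_i$ outside $(\xi_j-\eta/2,\xi_j+\eta/2)$, so the ``far from $\xi_j$'' line of Lemma \ref{PU} gives $PU_{\de_j,\xi_j}=2\pi G_{\xi_j}+O(\de_j^2)$ on the essential support of $e^{U_{\de_i,\xi_i}}$ (the mass escaping the $\eta/2$--neighborhood of $\xi_i$ is only $O(\de_i)$). Since $G_{\xi_j}$ is smooth at $\xi_i$, a Taylor expansion $G_{\xi_j}(x)=G(\xi_i,\xi_j)+O(|x-\xi_i|)$ combined with the same change of variable yields
$$\langle PU_{\de_i,\xi_i}, PU_{\de_j,\xi_j}\rangle = 4\pi^2 G(\xi_i,\xi_j)+O(\lambda|\log\lambda|).$$

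Finally, I substitute the choice $\de_i=\frac{\lambda}{2}e^{F_i(\Vxi)}$, which produces the main term $-4\pi k\log\lambda$ and a remainder $-4\pi\sum_i F_i(\Vxi)$ from the $\log(2\de_i)$ factors. Using the symmetry $G(\xi_i,\xi_j)=G(\xi_j,\xi_i)$, the geometric terms reorganize as
$$4\pi^2\sum_{i=1}^k H(\xi_i,\xi_i)+4\pi^2\sum_{i\neq j}a_ia_j G(\xi_i,\xi_j)=2\pi\sum_{i=1}^k F_i(\Vxi),$$
so the coefficient of $\sum F_i$ collapses to $-4\pi+2\pi=-2\pi$ and \eqref{normw} follows. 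The main obstacle is bookkeeping: correctly matching the combinatorics between the diagonal $H$--contributions and the off-diagonal double $G$--sum, and confirming that the error term remains $O(\lambda|\log\lambda|)$ uniformly over $\Vxi\in\mathcal P_{k,\eta}$. Uniformity is ensured because each constant in Lemma \ref{PU} depends only on $\eta$, and the parameters $\de_i$ satisfy $\de_i=O(\lambda)$ uniformly on $\mathcal P_{k,\eta}$ by the explicit formula \eqref{delta}.
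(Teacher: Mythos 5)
Your proposal is correct and follows essentially the same route as the paper: the same splitting of $\|\w\|^2$ into diagonal terms $\|PU_{\de_i,\xi_i}\|^2$ and cross terms $\langle PU_{\de_i,\xi_i},PU_{\de_j,\xi_j}\rangle=\int e^{U_{\de_i,\xi_i}}PU_{\de_j,\xi_j}\,dx$, the same use of Lemma \ref{PU} and the rescaled integrals with remainder controlled by \eqref{gen Est}, and the same final substitution $\log(2\de_i)=\log\lambda+F_i(\Vxi)$; the bookkeeping with the symmetry of $G$ collapses the coefficient to $-2\pi$ exactly as in the paper. The only cosmetic quibble is that the contribution of $I\setminus(\xi_i-\tfrac{\eta}{2},\xi_i+\tfrac{\eta}{2})$ is $O(\lambda|\log\lambda|)$ rather than $O(\de_i)$ (since $PU_{\de_j,\xi_j}$ is only bounded by $O(|\log\lambda|)$ there), which still fits within the stated error.
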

\begin{proof}
By definition of $\w$, in order to prove \eqref{normw}, it is sufficient to show that 
\begin{equation}\label{PUii}
\|PU_{\de_i,\Vxi_i}\|^2=  -4\pi \log \lambda -2\pi F_i(\Vxi) -4\pi^2 a_i \sum_{j\neq i} a_j G_{\xi_j}(\xi_i)+O(\lambda|\log \lambda|), \quad \text{ for } i =1,\ldots,k,
\end{equation}
and 
\begin{equation}\label{PUij}
<PU_{\de_i,\Vxi_i},PU_{\de_j,\Vxi_j}> = 4\pi^2 G_{\xi_j}(\xi_i) + O(\lambda|\log \lambda|), \quad \text{ for } i,j = 1,\ldots,k \text{ with } i\neq j. 
\end{equation}
Let us prove \eqref{PUii} first. For $i=1,\ldots,k$, since $e^{U_{\de_i,\Vxi_i}}=O(\lambda)$ in $\R\setminus (\xi_i-\frac{\eta}{2},\xi_i+\frac{\eta}{2})$, $PU_{\de_i,\Vxi_i}=0$ in $\R\setminus I$   and, by Lemma \ref{PU},  $\|PU_{\de_i,\Vxi_i}\|_{L^\infty(I)}=O(|\log \lambda|)$, we have  
$$
\|PU_{\de_i,\xi_i}\|^2 = \int_{\R} PU_{\de_i,\xi_i} e^{U_{\de_i,\xi_i}} dx = \int_{\xi_i-\frac{\eta}{2}}^{\xi_i+\frac{\eta}{2}}  PU_{\de_i,\xi_i} e^{U_{\de_i,\xi_i}} dx + O(\lambda |\log \lambda|). 
$$
Moreover, thanks to the estimates
$$
\int_{\xi_i-\frac{\eta}{2}}^{\xi_i+\frac{\eta}{2}} e^{U_{\de_i,\xi_i}}  dx  = 2\pi+O(\lambda) \quad \text{ and } \quad   \int_{\xi_i-\frac{\eta}{2}}^{\xi_i+\frac{\eta}{2}} U_{\de_i,\xi_i} e^{U_{\de_i,\xi_i}}  dx = -2\pi \log( 2\de_i) + O(\lambda |\log \lambda|),
$$ 
the expansion of $PU_{\de_i,\xi_i}$ from Lemma \ref{PU} yields
$$\begin{aligned}
\int_{\xi_i-\frac{\eta}{2}}^{\xi_i+\frac{\eta}{2}}  PU_{\de_i,\xi_i} e^{U_{\de_i,\xi_i}} dx  &  =  \int_{\xi_i-\frac{\eta}{2}}^{\xi_i+\frac{\eta}{2}}  (-\log (2\de_i) + U_{\de_i,\xi_i} + 2\pi H(\xi_i,x) )  e^{U_{\de_i,\xi_i}}   dx  +O(\lambda^2) \\
&  =-4\pi \log (2\de_i) +  4\pi^2 H(\xi,\xi_i) +O(\lambda|\log
 \lambda |)\\
 &  =-4\pi \log (2\de_i) + 2\pi F_i(\Vxi) - 4\pi^2 \sum_{j\neq i} G_{\xi_j}(\xi_i)  +O(\lambda|\log \lambda |).
\end{aligned}$$
Recalling that $\de_i$ is chosen as in \eqref{delta}, we have  $\log (2\delta_i) =\log \lambda + F_i(\Vxi)$, and we obtain  \eqref{PUii}.

With similar arguments, for $i\neq j$ we get
$$
\begin{aligned}
\int_{\R} (-\Delta)^\frac{1}{4} P U_{\de_i,\xi_i} (-\Delta)^\frac{1}{4} P U_{\de_j,\xi_j} dx &  =  \int_{\R} e^{U_{\de_i,\xi_i}} P U_{\de_j,\xi_j} dx \\
& = \int_{\xi_i-\frac{\eta}{2}}^{\xi_i+\frac{\eta}{2}}  e^{U_{\de_i,\xi_i}} P U_{\de_j,\xi_j} dx   + O(\lambda|\log \lambda|)\\
& = 2\pi \int_{\xi_i-\frac{\eta}{2}}^{\xi_i+\frac{\eta}{2}}  e^{U_{\de_i,\xi_i}} G_{\xi_j} dx  + O(\lambda|\log \lambda|) \\
& = 4\pi^2 G_{\xi_j}(\xi_i)  + O(\lambda|\log \lambda|),
\end{aligned}
$$
so that \eqref{PUij}  holds. 
\end{proof}

\begin{prop}\label{reduction}
For $\lambda \in (0,\lambda_0)$ and $\Vxi\in \mathcal P_{k,\eta}$,  the following conditions are equivalent:
\begin{enumerate}
\item $c_i(\lambda,\Vxi) = 0$, for $i=1,\ldots,k$.
\item $\nabla \F_\lambda (\Vxi) =0 $. 
\end{enumerate}
\end{prop}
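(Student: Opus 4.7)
The plan is to compute $\nabla \F_\lambda(\Vxi)$ explicitly via the chain rule and show that it equals $M(\lambda,\Vxi)^{\top}\mathbf{c}(\lambda,\Vxi)$, where $\mathbf{c}=(c_1,\ldots,c_k)$ and $M$ is a $k\times k$ matrix that is invertible for small $\lambda$. The equivalence of the two conditions will then be immediate.

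First I would differentiate $\F_\lambda(\Vxi)=J_\lambda(\w+\pl)$ with respect to $\xi_h$ via the chain rule, obtaining $\partial_{\xi_h}\F_\lambda(\Vxi)=J_\lambda'(\w+\pl)[\partial_{\xi_h}(\w+\pl)]$. Since $J_\lambda'(u)[v]=\langle u,v\rangle-\int_I f_\lambda(u) v\,dx$, identity \eqref{EqSum} gives
\[
J_\lambda'(\w+\pl)[v]=\sum_{j=1}^k c_j(\lambda,\Vxi)\int_\R e^{U_{\delta_j,\xi_j}}Z_{1,j}\,v\,dx=\sum_{j=1}^k c_j(\lambda,\Vxi)\,\langle PZ_{1,j},v\rangle,
\]
where the last equality uses $\fl PZ_{1,j}=e^{U_{\delta_j,\xi_j}}Z_{1,j}$. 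Setting $v=\partial_{\xi_h}(\w+\pl)$ gives $\partial_{\xi_h}\F_\lambda(\Vxi)=\sum_{j=1}^k c_j(\lambda,\Vxi)\,M_{jh}(\lambda,\Vxi)$, where
\[
M_{jh}(\lambda,\Vxi):=\langle PZ_{1,j},\,\partial_{\xi_h}(\w+\pl)\rangle.
\]

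The core of the proof is then to verify that $M(\lambda,\Vxi)$ is invertible uniformly in $\Vxi\in\mathcal P_{k,\eta}$ for $\lambda$ small. Using the elementary identities $\partial_\xi U_{\delta,\xi}=\delta^{-1}Z_{1,\delta,\xi}$ and $\partial_\delta U_{\delta,\xi}=-\delta^{-1}Z_{0,\delta,\xi}$, together with the fact that formula \eqref{delta} and the smoothness of $F_j$ on $\mathcal P_{k,\eta}$ yield $\partial_{\xi_h}\delta_j=\delta_j\,\partial_{\xi_h}F_j(\Vxi)=O(\lambda)$, I would decompose
\[
\partial_{\xi_h}\w=\frac{a_h}{\delta_h}PZ_{1,h}-\sum_{j=1}^k a_j\,\frac{\partial_{\xi_h}\delta_j}{\delta_j}\,PZ_{0,j}.
\]
Remark \ref{Orth} then gives $\langle PZ_{1,j},\partial_{\xi_h}\w\rangle=\pi a_h\delta_h^{-1}\mathbf 1_{\{j=h\}}+O(1)$, since the cross-pairings $\langle PZ_{1,j},PZ_{0,l}\rangle$ are $O(\lambda)$. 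For the $\pl$-contribution I would exploit the orthogonality $\langle PZ_{1,j},\pl\rangle=0$, which upon differentiating in $\xi_h$ yields $\langle PZ_{1,j},\partial_{\xi_h}\pl\rangle=-\langle \partial_{\xi_h}PZ_{1,j},\pl\rangle$. A direct parameter differentiation shows $\|\partial_{\xi_h}PZ_{1,j}\|=O(\delta_h^{-1})$, while Proposition \ref{PropPhi} gives $\|\pl\|=O(\lambda^{1/p}|\log\lambda|)$, so for any $p\in(1,2)$ this contribution is of order $\lambda^{1/p-1}|\log\lambda|=o(\delta_h^{-1})$. Combining, we find $M(\lambda,\Vxi)=\operatorname{diag}(\pi a_h/\delta_h)+o(1/\delta)$ entrywise, so $\operatorname{diag}(\delta_h)\,M$ converges to $\pi\operatorname{diag}(a_h)$, which is invertible. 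Hence $\nabla\F_\lambda(\Vxi)=0$ if and only if $c_j(\lambda,\Vxi)=0$ for every $j$.

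The main technical obstacle I foresee is the bookkeeping needed to justify the decomposition of $\partial_{\xi_h}\w$ and the bound on $\|\partial_{\xi_h}PZ_{1,j}\|$: both require differentiating the projection $(-\Delta)^{-1/2}[e^{U_{\delta,\xi}}\,\cdot\,]$ with respect to the parameters $(\delta,\xi)$, and handling the fact that, through \eqref{delta}, every $\delta_j$ depends on the full vector $\Vxi$. Once the diagonal-dominant structure $M\sim\operatorname{diag}(\pi a_h/\delta_h)$ is established, the invertibility of $M$ and the equivalence claimed in the proposition follow at once.
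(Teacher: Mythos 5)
Your proposal follows essentially the same route as the paper: the chain rule together with \eqref{EqSum} expresses $\nabla \F_\lambda$ as an invertible matrix applied to $(c_1,\ldots,c_k)$, and the invertibility is obtained from the same decomposition of $\partial_{\xi_i}\w$ (taking into account the $\Vxi$-dependence of $\Vde$ through \eqref{delta}), Remark \ref{Orth}, and the differentiated orthogonality relation $\langle PZ_{1,j},\pl\rangle=0$, leading to the same diagonally dominant structure $\pi a_i\delta_i^{-1}\delta_{ij}+o(\delta_i^{-1})$. The only deviation is your claimed bound $\|\partial_{\xi_h}PZ_{1,j}\|=O(\delta_h^{-1})$, which is stated without justification and is sharper than the $O(\lambda^{-3/2})$ the paper derives via \eqref{EllEst} and \eqref{gen Est}; since either bound gives $\langle PZ_{1,j},\partial_{\xi_h}\pl\rangle=o(\lambda^{-1})$ after multiplying by $\|\pl\|=O(\lambda^{1/p}|\log\lambda|)$ with $p\in(1,2)$, this does not affect the argument.
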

\begin{proof}
By definition of $J_\lambda$, we have that 
$$
\nabla J_{\lambda}(u) = u - (-\Delta)^{-\frac{1}{2}} f_{\lambda}(\w),
$$
for any $u\in X^\frac{1}{2}_0(I)$. Then, recalling that $\ph_{\lambda}$ satisfies \eqref{EqPhi}-\eqref{EqSum}, we get
$$
\begin{aligned}
\nabla J_{\lambda}(\w+\ph_{\lambda,\Vxi}) & = \w+\ph_{\lambda,\Vxi} -  (-\Delta)^{-\frac{1}{2}} f_{\lambda}(\w+\ph_{\lambda,\Vxi}) = \sum_{j=1}^k c_j PZ_{1,j},
\end{aligned}
$$
For $i=1,\ldots,k$, by the chain rule, we find 
$$
\frac{\partial \F_{\lambda}}{\partial\xi_i} (\Vxi)   = <\nabla J_\lambda (\w+\ph_{\lambda}), \frac{d}{d \xi_i}\w +\frac{d}{d\xi_i} \ph_{\lambda,\Vxi} >  = \sum_{j=1}^k \beta_{ij} c_j,
$$
where
$$\beta_{ij} = <PZ_{1,j}, \frac{d}{d \xi_i}\w> +<PZ_{1,j},  \frac{d}{d\xi_i} \ph_{\lambda,\Vxi}>.
$$
Then, it suffices to show that the matrix $(\beta_{ij})$ is invertible. Indeed, this gives 
$$
\nabla \F_\lambda(\xi) = 0 \quad \Longleftrightarrow \quad  c_i(\lambda,\Vxi) =0, \quad i=1,\ldots,k.
$$
Let us then estimate the coefficients $\beta_{ij}$. First, for $i,h=1,\ldots,k$, we observe that 
$$\begin{aligned}
\frac{d}{d \xi_i} e^{U_{\de_h,\xi_h}} &  = \frac{\delta_{i,h}}{\de_i}  e^{U_{\de_i,\xi_i}} Z_{1,i}- \frac{1}{\de_h} e^{U_{\de_h,\xi_h}} Z_{0,h} \frac{\partial \de_h}{\partial \xi_i}  \\
& = \frac{\delta_{i,h}}{\de_i}  e^{U_{\de_i,\xi_i}} Z_{1,i}- e^{U_{\de_h,\xi_h}} Z_{0,h}  \frac{ \partial F_{h}}{\partial \xi_i}(\Vxi),
\end{aligned}
$$
where $\delta_{i,h}$ denotes the  Kronecker delta and we have used that $\Vde = (\de_1,\ldots,\de_k)$ is given by \eqref{delta}. Consequently 
$$
\frac{d}{d \xi_i} \w   =  \sum_{h=1}^k a_h \frac{d}{d\xi_i}  PU_{\de_h,\xi_h}  =  \frac{a_i}{\delta_i} PZ_{1,i} - \sum_{h=1}^k a_h PZ_{0,h} \frac{\partial F_h }{{\partial \xi_i}}(\Vxi) .
$$
Then, we infer 
\begin{equation}\label{PZDw}
 <PZ_{1,j},\frac{d \w}{d \xi_i}>  =   \frac{a_i}{\delta_i}<PZ_{1,j},PZ_{1,i}> - \sum_{h=1}^k a_h \frac{\partial F_{h}}{{\partial \xi_i}}(\Vxi)  <PZ_{1,j},PZ_{0,h}> = \frac{\pi a_i}{\de_i} \delta_{i,j} + O(1).  
\end{equation} 
Now, for $i,j=1,\ldots,k$, observe  that
$$\begin{aligned}
\pl \in K_{\Vde,\Vxi}^\perp \quad  &\Longrightarrow\quad  <PZ_{1,j},\pl>
 =0,\\
& \Longrightarrow\quad <\frac{d}{d\xi_i} PZ_{1,j},  \ph_{\lambda,\Vxi} > + <PZ_{1,j},  \frac{d}{d \xi_i} \pl> = 0.
\end{aligned}
$$
Note further that we have the identity 
$$
\begin{aligned}
\frac{d}{d\xi_i} e^{U_{\de_j,\xi_j}}Z_{1,j} %&= \delta_{i,j} \left( \frac{1}{\de_i} e^{U_{\de_i,\xi_i}}Z_{1,i}^2 -e^{2U_{\de_i,\xi_i}}Z_{0,i} \right)- \frac{2}{\delta_j} e^{U_{\de_j,\xi_j}}Z_{1,j}Z_{0,j} \frac{{\partial \delta_j}}{\partial \xi_i }\\
 &= \delta_{i,j} \left( \frac{1}{\de_i} e^{U_{\de_i,\xi_i}}Z_{1,i}^2 -e^{2U_{\de_i,\xi_i}}Z_{0,i} \right)- 2 e^{U_{\de_j,\xi_j}}Z_{1,j}Z_{0,j} \frac{{\partial F_{j}(\Vxi)}}{\partial \xi_i }  \\
& = O(\frac{1}{\de_i} e^{3 U_{\de_i,\xi_i}}|x-\xi_i|^2) + O(e^{2 U_{\de_i,\xi_i}})+ O(e^{2 U_{\de_j,\xi_j}}),
\end{aligned}
$$
where we have used that $|Z_{0,i}|,\ |Z_{0,j}|\leq 1$.

Then, since $\frac{d}{d\xi_i} PZ_{1,j} = (-\Delta)^{-\frac{1}{2}} \frac{d}{d\xi_i} e^{U_{\de_j,\xi_j}}Z_{1,j}$, by \eqref{EllEst} and \eqref{gen Est} we get that 
$$
\|\frac{d}{d\xi_i} PZ_{1,j}\| = O\left(\|\frac{d}{d\xi_i} e^{U_{\de_j,\xi_j}}Z_{1,j} \|_{L^2(I)}\right) = O(\lambda^{-\frac{3}{2}}).
$$
In particular, recalling that for $p\in (1,2)$ we have $\|\ph_{\lambda,\Vxi}\|=O(\lambda^\frac{1}{p} |\log \lambda|)$, we get 
\begin{equation}\label{PZDph}
\begin{aligned}
<PZ_{1,j}, \frac{d}{d\xi_i} \ph_{\lambda,\Vxi}> = - <\frac{d}{d\xi_i} PZ_{1,j},  \ph_{\lambda,\xi_i} > = O(\lambda^{-\frac{3}{2}}\|\pl\|) = O(\lambda^{\frac{1}{p}-\frac{3}{2}}|\log \lambda| ) = o(\lambda^{-1}),
\end{aligned}
\end{equation}
uniformly for $\Vxi \in \mathcal P_{k,\eta}$. For $\lambda$ small enough, using \eqref{delta}, \eqref{PZDw} and \eqref{PZDph}, we conclude that the matrix $(\beta_{ij})$ is dominant diagonal and thus invertible. This concludes the proof.  
\end{proof}

The following lemma describes the asymptotic behavior of $\F_\lambda$ as $\lambda \to 0$. 

\begin{lemma}\label{Lemma conv}
We have 
$$
\F_\lambda(\Vxi) = -2\pi k \log \lambda -2\pi k  - \pi \sum_{i=1}^k F_i(\Vxi) + o(1),
$$
where $o(1)\to 0$ as $\lambda\to 0$, uniformly for $\Vxi \in \mathcal{P}_{k,\eta}$. 
\end{lemma}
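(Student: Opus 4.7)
The plan is to write $\mathcal{F}_\lambda(\Vxi) = J_\lambda(\w) + [J_\lambda(\w+\pl) - J_\lambda(\w)]$, compute $J_\lambda(\w)$ directly using the expansions already available, and show that the remainder is $o(1)$ uniformly in $\Vxi \in \mathcal{P}_{k,\eta}$ via a Taylor expansion.

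For $J_\lambda(\w) = \frac{1}{2}\|\w\|^2 - \int_I g_\lambda(\w)\,dx$, the quadratic part is given directly by Lemma \ref{Lemma normw}: $\frac{1}{2}\|\w\|^2 = -2\pi k \log\lambda - \pi\sum_{i=1}^k F_i(\Vxi) + O(\lambda|\log\lambda|)$. For the potential term, I would split $I$ as the union $\bigcup_{i=1}^k(\xi_i-\frac{\eta}{2},\xi_i+\frac{\eta}{2})$ and its complement. On the complement, Lemma \ref{PU} gives $\w = 2\pi\sum_j a_j G_{\xi_j} + O(\lambda^2) = O(1)$, so $\lambda(e^\w+e^{-\w}-2) = O(\lambda)$. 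Near each $\xi_i$, the expansion leading to \eqref{ContoE} yields
\[
\lambda e^{a_i\w}(x) = e^{U_{\de_i,\xi_i}(x)}\bigl(1+O(|x-\xi_i|)+O(\lambda^2)\bigr),
\]
thanks to the choice $\de_i = \frac{\lambda}{2}e^{F_i(\Vxi)}$. Since $\int_{\xi_i-\eta/2}^{\xi_i+\eta/2} e^{U_{\de_i,\xi_i}}\,dx = 2\pi+O(\lambda)$ and estimate \eqref{gen Est} with $p=q=1$ gives $\int e^{U_{\de_i,\xi_i}}|x-\xi_i|\,dx = O(\lambda|\log\lambda|)$, one obtains $\lambda\int_{\xi_i-\eta/2}^{\xi_i+\eta/2} e^{a_i\w}\,dx = 2\pi + O(\lambda|\log\lambda|)$. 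The opposite exponential is bounded by $\lambda e^{-a_i\w}=O(\lambda^2 e^{-U_{\de_i,\xi_i}})$, which by \eqref{-U} integrates to $O(\lambda)$. Summing over $i$ gives $\int_I g_\lambda(\w)\,dx = 2\pi k + O(\lambda|\log\lambda|)$ and therefore
\[
J_\lambda(\w) = -2\pi k\log\lambda - 2\pi k - \pi\sum_{i=1}^k F_i(\Vxi) + O(\lambda|\log\lambda|).
\]

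To control $J_\lambda(\w+\pl) - J_\lambda(\w)$ I would use Taylor's formula. Since $\nabla J_\lambda(u) = u - (-\Delta)^{-1/2}f_\lambda(u)$, one has $\nabla J_\lambda(\w) = -(-\Delta)^{-1/2}E$, so by duality $\langle\nabla J_\lambda(\w),\pl\rangle = -\int_I E\pl\,dx$. Fixing $p\in (1,2)$, H\"older's inequality together with Lemma \ref{Est_E}, Sobolev's embedding $X^\frac{1}{2}_0(I)\hookrightarrow L^{p'}(I)$ and the bound $\|\pl\|=O(\lambda^{1/p}|\log\lambda|)$ from Proposition \ref{PropPhi} give
\[
\left|\int_I E\pl\,dx\right| \le \|E\|_{L^p(I)}\|\pl\|_{L^{p'}(I)} = O(\lambda^{2/p}|\log\lambda|)=o(1).
\]
For the second-order remainder $D^2 J_\lambda(u)[\pl,\pl] = \|\pl\|^2 - \int_I f_\lambda'(u)\pl^2\,dx$ the first piece is $O(\lambda^{2/p}|\log\lambda|^2)=o(1)$, and for the second I would use $|f_\lambda'(\w+t\pl)|\le 2\lambda e^{|\w|}e^{|\pl|}$ and a triple H\"older inequality with exponents $\frac{1}{s}+\frac{1}{s_1}+\frac{1}{s_2}=1$, combining $\|\lambda e^{|\w|}\|_{L^s}=O(\lambda^{(1-s)/s})$ (from \eqref{Eq s}), the Moser--Trudinger bound $\|e^{|\pl|}\|_{L^{s_1}}=O(1)$ and Sobolev's embedding $\|\pl^2\|_{L^{s_2}}=\|\pl\|_{L^{2s_2}}^2\le C\|\pl\|^2$. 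This produces $O(\lambda^{2/p+(1-s)/s}|\log\lambda|^2) = o(1)$ for $s>1$ close enough to $1$, since $2/p>1$. Summing all contributions yields $\mathcal{F}_\lambda(\Vxi) = J_\lambda(\w) + o(1)$.

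The main technical step is the explicit computation of $\int_I g_\lambda(\w)\,dx = 2\pi k + o(1)$, which relies crucially on the cancellation between $\lambda e^{a_i\w}$ and $\lambda e^{-a_i\w}$ engineered by the choice \eqref{delta} of the concentration parameters $\de_i$; everything else is routine perturbative analysis built on the estimates of Sections \ref{Sec:E}--\ref{Sec:Lin}.
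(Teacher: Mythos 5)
Your proposal is correct, and its core coincides with the paper's proof: both rest on Lemma \ref{Lemma normw} for $\tfrac{1}{2}\|\w\|^2$ and on the expansion near each $\xi_i$ (driven by the choice \eqref{delta} of $\de_i$) to get $\int_I g_\lambda(\w)\,dx = 2\pi k + O(\lambda|\log\lambda|)$. Where you diverge is in handling $J_\lambda(\w+\pl)-J_\lambda(\w)$. The paper does this without any Taylor expansion of $J_\lambda$: it expands the quadratic part directly, $\|\w+\pl\|^2=\|\w\|^2+O(\|\w\|\,\|\pl\|)+O(\|\pl\|^2)=\|\w\|^2+O(\lambda^{1/p}|\log\lambda|^{3/2})$, and bounds the potential part by the Lipschitz-type estimate of Remark \ref{similar} applied to $g_\lambda=f_\lambda'-2\lambda$, giving $\|g_\lambda(\w+\pl)-g_\lambda(\w)\|_{L^1(I)}=O(\lambda^{\frac{1}{p}+\frac{1}{s}-1}|\log\lambda|)=o(1)$. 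You instead Taylor-expand $J_\lambda$ to second order along the segment $\w+t\pl$, identify $\nabla J_\lambda(\w)=-(-\Delta)^{-\frac{1}{2}}E$ so that the first-order term is $-\int_I E\,\pl\,dx=O(\lambda^{2/p}|\log\lambda|)$ by Lemma \ref{Est_E}, Sobolev and Proposition \ref{PropPhi}, and control the second-order remainder with the same H\"older/Moser--Trudinger machinery as in \eqref{Eq s}--\eqref{Eq s2s3}. Both routes are valid and uniform in $\Vxi\in\mathcal P_{k,\eta}$; yours requires the (standard, but worth stating) $C^2$-regularity of $J_\lambda$ on $X_0^{1/2}(I)$, which follows from the fractional Moser--Trudinger inequality, and in exchange yields a slightly sharper bound on the first-order correction, while the paper's argument is a bit more economical because it reuses only estimates already recorded (Remark \ref{similar}) and never differentiates $J_\lambda$ twice.
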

\begin{proof}
According to Lemma \ref{Lemma normw}, we have $\|\w\|^2= O(|\log \lambda|)$, so that
$$\begin{aligned}
\|\w+ \ph_{\lambda,\Vxi}\|^2 & = \|\w\|^2+ O(\|\w\|\|\ph_{\lambda,\xi}\|) + O(\|\ph_{\lambda,\Vxi}\|^2)\\
&  = \|\w\|^2 + O(\lambda^\frac{1}{p}|\log \lambda|^\frac{3}{2}).
\end{aligned}
$$
Noting that $g_\lambda = f_\lambda'-2\lambda$, by Remark \ref{similar}, for any $p\in (1,2)$ and $s>p$ such that $\frac{1}{s}+\frac{1}{p}>1$, one has
$$
\|g_\lambda(\w+\ph_{\lambda,\Vxi}) - g_\lambda(\w)\|_{L^1(I)} \le C \lambda^\frac{1-s}{s} \|\ph_{\lambda,\Vxi}\| = O(\lambda^{\frac{1}{p}+\frac{1}{s}-1} |\log \lambda|) =o(1), 
$$
as $\lambda\to 0$. Thus 
\begin{equation}\label{almost}
J_\lambda(\w+\ph_{\lambda,\Vxi}) = J_{\lambda}(\w) +o(1) = \frac{1}{2}\|\w\|^2 - \int_{I} g_\lambda(\w) dx  + o(1).
\end{equation}
Using again that $g_\lambda = f_\lambda'-2\lambda $ together with Lemma \ref{ExpDer} and \eqref{gen Est}, we find 
$$
\int_{I} g_\lambda(u) dx = \sum_{i=1}^k \int_{\xi_i-\frac{\eta}{2}}^{\xi_i+\frac{\eta}{2}} e^{U_{\de_i,\Vxi_i}} dx +    O(\lambda|\log \lambda|) = 2\pi k +O(\lambda|\log \lambda|).
$$ 
Then the conclusion follows by Lemma \ref{Lemma normw} and \eqref{almost}.
\end{proof}

The previous lemma shows that, up to constant terms that do not depend on $\Vxi$, the functional $\F_\lambda$ converges uniformly to a multiple of the function  
\begin{equation}\label{FirstF}
\F(\Vxi) := \frac{1}{2\pi} \sum_{i=1}^k {F_i (\Vxi)}  = \sum_{i=1}^k H(\xi_i,\xi_j) +\sum_{i,j=1, i\neq j}^k a_i a_j G(\xi_i,\xi_j).
\end{equation}
In the next section, we shall study the properties of $\F$ and exploit them to show that $\F_\lambda$ has a critical point  (a local minimum) in $\mathcal P_{k,\eta}$, provided $\eta$ is fixed small enough and the $a_i's$ have alternating sign.

%In the next section, we shall study the properties of $\F$ and we will prove that $\F$ has a maximum point $\bar \Vxi$ in $\mathcal P_{k,\eta}$ such that
%$$
%\max_{ \partial \mathcal P_{k,\eta}} \F < \F(\bar \Vxi),
%$$ 
%provided $\eta$ is fixed small enough and the $a_i's$ have alternating sign. We can deduce from this that $\F_\lambda$ has a critical point  (a local minimum) in $\mathcal P_{k,\eta}$.   

%If we prove that $\F$ has a critical point in $\mathcal{P}_{k,\eta}$ which is stable with respect to small uniform perturbations, then we can find a critical point of $\F_\lambda$ in $\mathcal P_{k,\eta}$ and we can conclude the proof of Theorem \ref{TrmBetter} thanks to Propositions \ref{reduction} and \ref{PropPhi}. 
}

{
\subsection{Existence of {a critical} point}
Let us now assume $a_i=-a_{i+1},$ $\forall i\in\{1,..,k-1\}$. {We refer to the appendix for some considerations concerning different possible choices of the $a_i's$. } With this assumption, the function $\F$ defined in \eqref{FirstF} becomes
\[\begin{split}
\F(\Vxi)= &\sum_{i=1}^kH(\xi_i,\xi_i)+ \sum_{i,j=1, i\neq j}^k (-1)^{i+j}G(\xi_i,\xi_j )\\
=&\sum_{i=1}^k\frac{1}{\pi}\log\left( 2(1-\xi_i^2)\right)) +\sum_{i,j=1, i\neq j}^k(-1)^{i+j}\frac{1}{\pi}\log\left(\frac{1-\xi_i\xi_j+\sqrt{(1-\xi_i^2)(1-\xi_j^2)}}{ |\xi_i-\xi_j|}\right).
\end{split}
\]

{The goal of this section is to show that the set of maximum points for  $\F$  on the set 
\begin{equation}\label{P_k}
\mathcal P_{k}:=\{ {\Vxi}  =(\xi_1,\ldots,\xi_k),\ -1<\xi_i<\xi_{i+1}<1,\,\forall i =1,\ldots, k-1 \},
\end{equation}
is a non-empty compact subset of $\mathcal P_{k}$, independently of the value of $k\in\N$. Combining this with Lemma \ref{Lemma conv}, we will prove that the functional $\F_\lambda$ defined in \eqref{reducedF} has a critical point in $\mathcal P_k$ (in fact in $\mathcal P_{k,\eta}$, if $\eta$ is small enough).}   The proof of this result is inspired by the the proof of Theorem $3.3$ in \cite{Bapiwe}. We will provide some details here, since having the explicit expression for the Green function of our operator simplifies considerably many  steps of the proof. For example, we easily get the following properties. 

\begin{lemma}\label{PropGr}
The following properties hold:
\begin{enumerate}[label={(\roman*)}]
\item \label{Step_I}   $H(\xi,\xi)\rightarrow -\infty$, as  $\xi \rightarrow \partial I=\{-1,1\} $
\item\label{Step_II}  Let $\eps>0$ there exists a constant $c(\epsilon)$ such that
$|H(\xi,\xi)|\leq c(\eps)$,  {if } $\dist(\xi,\partial I )\geq \eps $
\item\label{Step_III}  Let $\eps>0$ be small enough, there exist a constant $c(\epsilon)$ such that $|G(x,y)|\leq c(\eps)$,  {if } $|x-y|\ge \eps. $
\vspace{-0.3cm}
\item\label{StepIV} {For any $x,\xi\in I$, $x\neq \xi$, we have $\frac{d}{dx}G(\xi,x)= - \frac{1}{\pi} \frac{\sqrt{1-\xi^2}}{(x-\xi) \sqrt{1-x^2}}$}
\item\label{Step_IV} Given any three points $x<y<z\in I$,  we have  $G(x,z)-G(x,y)\leq 0.$
\end{enumerate}
\end{lemma}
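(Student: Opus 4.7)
The plan is to derive all five properties by direct manipulation of the explicit formula \eqref{FormulaGI} and its consequences for the regular part $H$. As a preliminary step, I would reduce $H$ to closed form: writing out \eqref{Robin} using \eqref{FormulaGI}, the singular term $\frac{1}{\pi}\log\frac{1}{|x-\xi|}$ cancels and for $\xi,x\in I$ one obtains
$$H(\xi,x) = \frac{1}{\pi}\log\bigl(1-\xi x + \sqrt{(1-\xi^2)(1-x^2)}\bigr),$$
so in particular $H(\xi,\xi)=\frac{1}{\pi}\log(2(1-\xi^2))$. Both \ref{Step_I} and \ref{Step_II} follow immediately, since this quantity tends to $-\infty$ as $\xi\to\pm 1$ and is uniformly bounded when $\dist(\xi,\partial I)\ge \eps$.

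For \ref{Step_III}, I would use the AM--GM estimate $\sqrt{(1-\xi^2)(1-x^2)}\le 1-\tfrac{\xi^2+x^2}{2}$ together with $1-\xi x\le 1+\tfrac{\xi^2+x^2}{2}$ to bound the numerator in \eqref{FormulaGI} by $2$, obtaining $G(\xi,x)\le \frac{1}{\pi}\log(2/\eps)$ whenever $|x-\xi|\ge \eps$. The matching lower bound $G(\xi,x)\ge 0$ on $I\times I$ comes from the key algebraic identity
$$(1-\xi x)^2 - (x-\xi)^2 = (1-x^2)(1-\xi^2),$$
which, combined with $1-\xi x>0$ on $I\times I$, yields $1-\xi x + \sqrt{(1-\xi^2)(1-x^2)}\ge |x-\xi|$.

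For \ref{StepIV}, I would differentiate directly: setting $A(x):= 1-\xi x +\sqrt{(1-\xi^2)(1-x^2)}$, for $x>\xi$ we have
$$\tfrac{d}{dx}G(\xi,x) = \tfrac{1}{\pi}\Bigl(\tfrac{A'(x)}{A(x)} - \tfrac{1}{x-\xi}\Bigr),$$
and a short computation --- exploiting the identity above and rationalising $\sqrt{1-x^2}-\sqrt{1-\xi^2}$ by multiplication by its conjugate so as to factor $x+\xi$ --- simplifies this to the stated formula; the case $x<\xi$ is analogous. Property \ref{Step_IV} is then an immediate consequence of \ref{StepIV}: for $x<y<z$ the derivative $\tfrac{d}{dy}G(x,y)$ is strictly negative on $(x,1)$, so $G(x,\cdot)$ is decreasing there and $G(x,z)\le G(x,y)$. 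The only mildly technical point is the algebraic simplification in \ref{StepIV}, which however reduces to verifying a single polynomial identity between rational combinations of $x,\xi,\sqrt{1-x^2}$ and $\sqrt{1-\xi^2}$.
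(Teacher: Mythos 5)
Your proposal is correct, and it matches the paper's (implicit) approach: the paper states this lemma without proof, treating all five items as easy consequences of the explicit formula \eqref{FormulaGI}, which is exactly what you carry out. I checked the key computations — the closed form $H(\xi,x)=\frac{1}{\pi}\log\bigl(1-\xi x+\sqrt{(1-\xi^2)(1-x^2)}\bigr)$ (hence $H(\xi,\xi)=\frac{1}{\pi}\log(2(1-\xi^2))$, consistent with the expression of $\F$ in the paper), the identity $(1-\xi x)^2-(x-\xi)^2=(1-x^2)(1-\xi^2)$ giving $G\ge 0$, and the simplification of $\frac{1}{\pi}\bigl(\frac{A'(x)}{A(x)}-\frac{1}{x-\xi}\bigr)$ to $-\frac{1}{\pi}\frac{\sqrt{1-\xi^2}}{(x-\xi)\sqrt{1-x^2}}$ — and they all hold, with \ref{Step_IV} then following from monotonicity as you say.
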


%
%\begin{lemma}\label{Step_IV}
%Given any three points $x<y<z\in I$, the Green function satisfies 
%$$G(x,z)-G(x,y)\leq 0  .$$
%\end{lemma}
%\begin{proof}
%This follows directly from \ref{StepIV} of Lemma \ref{PropGr}, which shows that $G(x,\cdot)$ is monotone decreasing in the interval $(x,+\infty)$. 
%\end{proof}

The following lemma  provides upper bounds on $\F$. 

\begin{lemma}\label{PropLess} 
For any $\Vxi = (\xi_1,\ldots,\xi_k)\in \mathcal P_k$, we have
$$\sum_{i,j=1, i\neq j}^k(-1)^{i+j} G(\xi_i,\xi_j) \leq 0,$$
\end{lemma}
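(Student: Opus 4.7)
The plan is to exploit the symmetry $G(\xi_i,\xi_j)=G(\xi_j,\xi_i)$ in order to reduce the claim to showing
$$
S := \sum_{1\le i<j\le k}(-1)^{i+j}G(\xi_i,\xi_j)\le 0,
$$
and then to split $S=\sum_{i=1}^{k-1}T_i$ where, for each fixed $i$,
$$
T_i := \sum_{j=i+1}^{k}(-1)^{i+j}G(\xi_i,\xi_j) = -b_1+b_2-b_3+\cdots,\qquad b_m:=G(\xi_i,\xi_{i+m}).
$$
It will therefore suffice to show $T_i\le 0$ for every $i\in\{1,\ldots,k-1\}$.

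The two properties of the sequence $(b_m)_{m=1}^{k-i}$ that are needed for this are monotonicity $b_1\ge b_2\ge\cdots\ge b_{k-i}$ and non-negativity $b_m\ge 0$. The first is a direct application of Lemma~\ref{PropGr}\ref{Step_IV} to the triple $x=\xi_i<\xi_{i+m}=y<\xi_{i+m+1}=z$, which indeed lies in $I$ since $\Vxi\in\mathcal P_k$. The second, i.e.\ $G(\xi_i,\xi_j)\ge 0$ for any two distinct points of $I$, is not stated explicitly in Lemma~\ref{PropGr}, but follows at once from item~\ref{StepIV}: for $x>\xi_i$ the derivative $\tfrac{d}{dx}G(\xi_i,x)$ is negative, while $G(\xi_i,x)\to 0$ as $x\to 1^-$ by the explicit formula \eqref{FormulaGI}, so $G(\xi_i,\cdot)$ stays positive on $(\xi_i,1)$.

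With these two facts in hand, the proof is concluded by pairing consecutive terms of $T_i$: each pair $-b_{2\ell-1}+b_{2\ell}\le 0$ by monotonicity. When $k-i$ is even every term is paired and the bound $T_i\le 0$ is immediate; when $k-i$ is odd a single unpaired negative term $-b_{k-i}\le 0$ is left, and non-negativity still gives $T_i\le 0$. Summing in $i$ then yields $S\le 0$, which is the claim.

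I do not foresee any serious obstacle in this argument. It rests entirely on the elementary pairing of a decreasing non-negative sequence with alternating signs, and the only care required is to invoke the correct item of Lemma~\ref{PropGr} for monotonicity and to separately justify the non-negativity of $G$ on $I\times I$, since it is not part of the listed properties.
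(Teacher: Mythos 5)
Your proof is correct and takes essentially the same route as the paper: the identical decomposition into the partial sums $T_i$ (the paper's $G_i(\Vxi)$), the same pairing of consecutive terms justified by property \emph{\ref{Step_IV}} of Lemma \ref{PropGr}, and the same use of $G(\xi_i,\xi_k)\ge 0$ to absorb the single unpaired term when $k-i$ is odd. Your explicit verification that $G\ge 0$ (via \emph{\ref{StepIV}} and the vanishing of $G_{\xi_i}$ at the endpoint) is a small addition that the paper leaves implicit.
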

\begin{proof}
For any $1\le i\le k-1$, we set $$G_i(\Vxi):= \sum_{j=i+1}^k (-1)^{i+j} G(\xi_i,\xi_j),$$ so that 
$$
\sum_{i,j=1, i\neq j}^k(-1)^{i+j} G(\xi_i,\xi_j) = 2 \sum_{i=1}^{k-1} G_i(\Vxi).
$$
Then, it is sufficient to observe that $G_i(\Vxi)\le 0$ for any $1\le i\le k-1.$ Indeed, if $k-i$ is even, we have that 
$$
G_i(\Vxi)=\sum_{j=1}^{k-i} (-1)^j G(\xi_i,\xi_{i+j}) =  \sum_{j=2, \, j\, even}^{k-i} G(\xi_i,\xi_{i+j}) - G(\xi_i,\xi_{i+j-1}) \le 0, 
$$
where the last inequality follows by \emph{\ref{Step_IV}} of Lemma \ref{PropGr}. If instead $k-i$ is odd, then we have 
$$
G_i(\Vxi)=\sum_{j=1}^{k-i} (-1)^j G(\xi_i,\xi_{i+j}) = - G(\xi_i,\xi_{k}) + \sum_{j=2, \, j\, even}^{k-i-1} G(\xi_i,\xi_{i+j}) - G(\xi_i,\xi_{i+j-1}) \le 0, 
$$
where we used again property \emph{\ref{Step_IV}} of Lemma \ref{PropGr} together with the inequality $G(\xi_i,\xi_k)\ge 0$. 
\end{proof}

\begin{prop}\label{Step_V}
For any $k\in\N$, we have $\F(\Vxi)\rightarrow -\infty$ when $\dist\left(\Vxi,\ \mathcal \partial  \mathcal P_{k}\right)\rightarrow 0$. In particular, $\F$  has a maximum point in $\mathcal P_{k}$. Moreover, the set $M_{\F}$ of global maxima for $\F$ in  $\mathcal P_k$ is compact.
\end{prop}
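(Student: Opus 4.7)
The plan is to reduce everything to the claim that $\F(\Vxi)\to -\infty$ as $\dist(\Vxi,\partial\mathcal P_k)\to 0$; existence of a maximum in $\mathcal P_k$ and compactness of $M_\F$ then follow by standard reasoning, since any maximizing sequence must remain in a compact subset of $\mathcal P_k$ (and hence admits a convergent subsequence whose limit is a maximizer), while $M_\F$ is closed as the preimage of a single value under the continuous $\F$ and is contained in such a compact subset.

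To establish the boundary divergence, I would pass to a subsequence with $\Vxi^n\to\Vxi^\infty\in\overline{\mathcal P_k}$ and distinguish two cases. First, if some $\xi_i^\infty\in\{-1,1\}$, Lemma~\ref{PropLess} gives $\F(\Vxi^n)\le \sum_j H(\xi_j^n,\xi_j^n)$; since $H(\xi,\xi)=\pi^{-1}\log(2(1-\xi^2))$ is bounded above by $\pi^{-1}\log 2$ on $I$ and tends to $-\infty$ at $\partial I$, the sum diverges to $-\infty$. Otherwise all $\xi_i^\infty\in(-1,1)$, but $\xi_{i+1}^\infty=\xi_i^\infty$ for some $i$; in this case I partition $\{1,\ldots,k\}$ into maximal clusters $C_1<\ldots<C_p$ of consecutive indices sharing a common limit, at least one of which, say $C=\{l_0,\ldots,l_0+c-1\}$, has size $c\ge 2$. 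The $H$-contributions stay uniformly bounded (all $\xi_i^n$ live in a compact subset of $I$), and all between-cluster Green function values stay bounded (inter-cluster distances are bounded below). It therefore suffices to prove that the within-cluster alternating sum for $C$ tends to $-\infty$.

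For the within-cluster analysis I would use a scaling. Let $\rho:=\xi_{l_0+c-1}^n-\xi_{l_0}^n\to 0$ and write $\xi_{l_0+l-1}^n=\xi_{l_0}^n+\rho\,y_l$ with $0=y_1<y_2<\ldots<y_c=1$. Using that the numerator $1-\xi_l^n\xi_m^n+\sqrt{(1-(\xi_l^n)^2)(1-(\xi_m^n)^2)}$ appearing in $G$ converges to $2(1-(\zeta^\ast)^2)>0$, where $\zeta^\ast\in(-1,1)$ is the cluster's common limit, one obtains
\begin{equation*}
\sum_{l<m\in C}(-1)^{l+m}G(\xi_l^n,\xi_m^n) \;=\; \frac{N\log\rho}{\pi}-\frac{\log P(y)}{\pi}+O(1),
\end{equation*}
where $N:=-\sum_{l<m\in C}(-1)^{l+m}$ equals $c/2$ if $c$ is even and $(c-1)/2$ if $c$ is odd, and $P(y):=\prod_{l<m\in C}(y_m-y_l)^{(-1)^{l+m}}$. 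Since $N>0$, the leading term diverges to $-\infty$, so the conclusion reduces to showing that $P(y)$ stays bounded below by a positive constant on the open simplex $\Delta_c:=\{0=y_1<y_2<\cdots<y_{c-1}<1=y_c\}$.

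The main obstacle is thus this Vandermonde-like lower bound on $P$. I would establish it by a boundary/continuity argument on $\overline{\Delta_c}$: as any adjacent gap $y_{l+1}-y_l\to 0$ in $\Delta_c$, the factor $(y_{l+1}-y_l)^{-1}$ (which appears because the sign of the adjacent pair is $(-1)^{l+(l+1)}=-1$) diverges to $+\infty$, while the remaining factors involving $y_l$ or $y_{l+1}$ group into pairs $(y_m-y_l)^{(-1)^{l+m}}(y_m-y_{l+1})^{(-1)^{l+m+1}}$ whose limits are finite and nonzero, since $y_m-y_l$ and $y_m-y_{l+1}$ tend to the same positive value with opposite exponents. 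A mild recursion handles the simultaneous collapse of several consecutive gaps (the within-sub-cluster factors themselves form a lower-dimensional Vandermonde-like product that diverges by the same mechanism). Hence $P$ admits a lower semicontinuous extension to the compact set $\overline{\Delta_c}$ taking the value $+\infty$ on $\partial\Delta_c$, so its infimum is attained in the interior and is strictly positive. Combining with the scaling identity above yields the desired within-cluster divergence and completes the proof.
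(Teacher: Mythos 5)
Your proposal is correct, but it treats the degenerate-interior case by a genuinely different mechanism than the paper. The boundary case (some $\xi_i^n\to\pm1$) is handled identically, via Lemma \ref{PropLess} and the explicit $H$. For collapsing interior points, the paper never introduces clusters or a rescaling: after discarding the bounded numerators of $G$, it picks the \emph{largest} index $i_0$ whose consecutive gap collapses, writes the singular part of $\F$ as $-\tfrac{2}{\pi}\sum_{i\le i_0}\zeta_i^n+O(1)$ with $\zeta_i^n=\sum_{j=1}^{k-i}(-1)^j\log|\xi_i^n-\xi_{i+j}^n|$, and pairs consecutive terms of each $\zeta_i^n$ into logarithms of ratios $\ge 1$ (with at most one unpaired term controlled by $|\xi_i-\xi_k|\le 2$), so every $\zeta_i^n$ is bounded below while $\zeta_{i_0}^n\to+\infty$ by maximality of $i_0$. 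You instead blow up each cluster at scale $\rho$, extract the factor $\rho^{-N}$ with $N=c/2$ or $(c-1)/2>0$ (your count of $N$ is right), and reduce to a uniform positive lower bound for the alternating Vandermonde-type product $P$ on the simplex, which you obtain by boundary analysis plus a recursion on sub-clusters; this compactness/induction argument does close (all cross factors between distinct sub-clusters are individually bounded away from $0$ and $\infty$, and the base case $c=2$ is trivial), though as stated you should note that \emph{every} cluster of size $\ge 2$ is controlled, not just the chosen one. The trade-off: the paper's pairing trick is shorter and, applied inside a single cluster, actually proves your key bound at once with the explicit constant $P\ge 1$ (since $y_{i+j}-y_i$ is increasing in $j$ and $y_c-y_i\le 1$), so your recursion could be replaced by one line; on the other hand your scaling version yields extra quantitative information, namely the divergence rate $\tfrac{N}{\pi}\log\rho$ in terms of the cluster size. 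The final deduction of existence of a maximum and compactness of $M_{\F}$ is the same standard argument in both.
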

\begin{proof}
{It is sufficient to show that, for any sequence $\Vxi^n=(\xi_1^n,\ldots,\xi_k^n)\in \mathcal P_{k}$ with $\dist(\Vxi^n,\partial \mathcal P_{k})\to 0$ as $n\to +\infty$, up to extracting a subsequence, one has $\F(\Vxi^n)\to -\infty$ as $n\to +\infty$}. If there exists $i\in \{1,\ldots,k\}$ such that $\xi_i^n \to \partial I $, then \emph{\ref{Step_I}} of Lemma \ref{PropGr} implies  that $H(\xi_i^n,\xi_i^n)\to -\infty$ and, thanks to Lemma \ref{PropLess},
$$
\F(\Vxi^n) \le \sum_{j=1}^k H(\xi_j^n,\xi_j^n) \le H(\xi_i^n,\xi_i^n) + \frac{k-1}{\pi} \log 2 \to -\infty,
$$
as $n\to +\infty$. Thus, up to a subsequence, we may assume that $\exists\;\eps >0$ such that $|\xi_i^n|\le 1-\eps$, $1\le i\le k$. Then $d(\Vxi^n, \partial \mathcal P_k)\to 0$ implies $\xi_{i}^n-\xi_{i+1}^n \to 0$ for some $1\le i\le k-1$.   Let $i_0$ be the maximal $i\in \{1,\ldots,k-1\}$ such that this property holds. Then, up to extracting another subsequence we may assume $\eps\le \xi_{i+1}-\xi_i \le 2$, for any $i_0<i\le k-1$.  Note  that \emph{\ref{Step_II}} and \emph{\ref{Step_III}} of Lemma \ref{PropGr} give
$$\begin{aligned}
\F(\Vxi^n) & = - \frac{1}{\pi}\sum_{i\neq j} (-1)^{i+j} \log |\xi_i^n-\xi_{j}^n| +O(1)\\
%& = -\frac{2}{\pi} \sum_{i=1}^{k-1} \sum_{j=i+1}^k  (-1)^{i+j} \log |\xi_i^n-\xi_{j}^n| + O(1) \\
& = -\frac{2}{\pi} \sum_{i=1}^{k-1} \sum_{j=1}^{k-i}  (-1)^{j} \log |\xi_i^n-\xi_{i+j}^n| + O(1) \\
& = -\frac{2}{\pi} \sum_{i=1}^{i_0} \underbrace{\sum_{j=1}^{k-i}  (-1)^{j} \log |\xi_i^n-\xi_{i+j}^n|}_{=:\zeta_i^n} + O(1).
\end{aligned}
$$
If $k-i$ is even, then we have
$$
e^{\zeta_i^n} = \prod_{j=1}^\frac{k-i}{2} \frac{|\xi_i^n-\xi_{i+2j}^n|}{|\xi_i^n-\xi_{i+2j-1}^n|} \ge 1,
$$
while if $k-i$ is odd, we have that 
$$e^{\zeta_i^n} = \frac{1}{|\xi_k^n-\xi_{i}^n|}\prod_{j=1}^\frac{k-i-1}{2} \frac{|\xi_i^n-\xi_{i+2j}^n|}{|\xi_i^n-\xi_{i+2j-1}^n|} \ge \frac{1}{2}.$$ Then, all the sequences $(\zeta_i^n)_{n\in \mathbb{N}}$, $1\le i\le k-1$ are bounded from below and we obtain 
$$
\begin{aligned}
\F(\Vxi_n) & \le -\frac{2}{\pi} \zeta_{i_0}^n + O(1)\\
& = -\frac{2}{\pi} \sum_{j=1}^{k-i_0} (-1)^j \log |\xi_{i_0}^n-\xi_{i_0+j}^n| + O(1)\\
& = \frac{2}{\pi} \log |\xi_{i_0}^n-\xi_{i_0+1}^n|+O(1) \to -\infty, 
\end{aligned}
$$
as $n\to \infty$. 
\end{proof}

%{
%\begin{rem}\label{TrMax}
%Proposition \ref{Step_V} implies that for any $k\in\N$, the functional $\F$  has a maximum point in $\mathcal P_{k}$. Moreover, the set $M_{\F}$ of global maxima for $\F$ in  $\mathcal P_k$ is a compact subset of $\mathcal P_k$.  {\color{red} (Maybe we can write this inside the previous proposition and not in a separate remark)}
%\end{rem}

\begin{cor}\label{final}
Let $\F_{\lambda}$ be as in \eqref{reducedF}. Then, there exists $\eta_0 \in (0,\frac{2}{k+1})$ such that $\F_\lambda$ has a critical point $\Vxi(\lambda) \in \mathcal P_{k,\eta_0},$  for any small $\lambda$. 
\end{cor}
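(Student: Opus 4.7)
The strategy is to exhibit $\Vxi(\lambda) \in \mathcal P_{k,\eta_0}$ as an interior minimizer of $\F_\lambda$ on a well-chosen compact subset of $\mathcal P_{k}$, and then invoke Proposition \ref{reduction} to deduce $c_i(\lambda,\Vxi(\lambda))=0$ for every $i$. The starting point is the observation that, since $\sum_{i=1}^k F_i(\Vxi)=2\pi\F(\Vxi)$, Lemma \ref{Lemma conv} can be rewritten as
$$
\F_\lambda(\Vxi)=-2\pi k\log\lambda-2\pi k-2\pi^2\F(\Vxi)+o(1),
$$
uniformly for $\Vxi\in\mathcal P_{k,\eta}$ as $\lambda\to 0$, for any fixed $\eta>0$. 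Hence, up to a $\Vxi$-independent additive constant and a vanishing uniform error, $\F_\lambda$ is a negative multiple of $\F$, so minimizers of $\F_\lambda$ are forced to lie near maximizers of $\F$.

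To pick the scale $\eta_0$, I would use that by Proposition \ref{Step_V} the set $M_\F$ of global maximizers of $\F$ on $\mathcal P_k$ is a non-empty compact subset of the \emph{open} set $\mathcal P_k$. I can therefore choose $\eta_0\in(0,\tfrac{2}{k+1})$ so small that $M_\F\subset\mathcal P_{k,2\eta_0}$. Denoting $M:=\max_{\mathcal P_k}\F$ and
$$
M':=\max_{\Vxi\in\overline{\mathcal P_{k,\eta_0}}\setminus\mathcal P_{k,2\eta_0}}\F(\Vxi),
$$
the second maximum exists by continuity of $\F$ on this compact set, and $M'<M$ because this set is disjoint from $M_\F$.

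The comparison step then runs as follows. Using the uniform expansion above, for $\lambda$ small enough the strict inequality
$$
\min_{\overline{\mathcal P_{k,2\eta_0}}}\F_\lambda\;<\;\min_{\overline{\mathcal P_{k,\eta_0}}\setminus\mathcal P_{k,2\eta_0}}\F_\lambda
$$
will hold, because the left-hand side is at most $-2\pi k\log\lambda-2\pi k-2\pi^2 M+o(1)$ (evaluate at any point of $M_\F$), while the right-hand side is at least $-2\pi k\log\lambda-2\pi k-2\pi^2 M'-o(1)$, and these two expressions differ by the definite positive quantity $2\pi^2(M-M')>0$ modulo a uniform error that tends to zero. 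Continuity of $\F_\lambda$, inherited from the $C^1$-regularity of $\Vxi\mapsto\ph_{\lambda,\Vxi}$ proved in the previous lemma together with the smoothness of $\Vxi\mapsto\w$, guarantees that $\F_\lambda$ attains its minimum over the compact set $\overline{\mathcal P_{k,\eta_0}}$; the displayed inequality then forces the minimizer $\Vxi(\lambda)$ into the open subset $\mathcal P_{k,2\eta_0}\subset\mathcal P_{k,\eta_0}$. Hence $\Vxi(\lambda)$ is a genuine interior critical point of $\F_\lambda$, and Proposition \ref{reduction} yields $c_i(\lambda,\Vxi(\lambda))=0$ for $i=1,\ldots,k$.

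The main obstacle, and the reason for introducing two distinct scales $\eta_0<2\eta_0$, is ensuring that the minimizer is \emph{strictly} interior to $\mathcal P_{k,\eta_0}$, so that it is actually a critical point and not merely a boundary minimum over which we have no control. What makes the scheme work is the strict gap $M-M'>0$, which is itself a direct consequence of the compactness of $M_\F$ inside the open set $\mathcal P_k$ furnished by Proposition \ref{Step_V}; everything else is a routine uniform perturbation estimate.
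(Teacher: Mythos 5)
Your argument is correct and follows essentially the same route as the paper: both exploit Proposition \ref{Step_V} (compactness of the maximizer set $M_{\F}$ inside the open set $\mathcal P_k$) together with the uniform expansion of Lemma \ref{Lemma conv} to force an extremum of $\F_\lambda$ into the interior of a fixed compact region, hence a critical point. The only cosmetic differences are that you minimize $\F_\lambda$ directly over $\overline{\mathcal P_{k,\eta_0}}$ and isolate the minimizer from a collar $\overline{\mathcal P_{k,\eta_0}}\setminus \mathcal P_{k,2\eta_0}$, whereas the paper maximizes an affinely renormalized functional $S_\lambda \to \F$ and compares with its maximum on $\partial\mathcal P_{k,\eta_0}$.
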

\begin{proof}
By Proposition \ref{Step_V}, we can fix $\eta_0$ such that all the maxima of $\F$ in $\mathcal P_k$ belong to $\mathcal P_{k,\eta_0}$. In particular, since $\mathcal P_{k,\eta_0}$ is open, we have 
\begin{equation}\label{maxF}
\max_{\partial \mathcal P_{k,\eta_0}} \F  < \max_{ \overline{ \mathcal P}_{k,\eta_0}} \F.
\end{equation}
According to Lemma \ref{Lemma conv}, we have that  
$$
S_\lambda := \frac{1}{\pi^2} \left( 2\pi k \log \lambda +2\pi k -\F_\lambda \right)\to \F,
$$ 
uniformly in $\overline{\mathcal P}_{k,\eta_0}$ (in fact in $\mathcal P_{k,\eta}$, for any fixed $\eta <\eta_0$) as $\lambda \to 0$. Then, by \eqref{maxF}, we must have 
\[
\max_{\partial  \mathcal P_{k,\eta_0}} S_\lambda < \max_{\overline{\mathcal P}_{k,\eta_0}} S_\lambda,
\]
which implies that $S_\lambda$ has a maximum point $\Vxi(\lambda)$ in $\mathcal P_{k,\eta_0}$. In particular,  $\Vxi(\lambda)$ is a critical point for $S_{\lambda}$ and $\F_\lambda$. 
%Observe also that $\dist(\Vxi(\lambda),M_{\F}) \to 0$ as $\lambda \to 0$. Otherwise, we could find a sequence $\lambda_n \to 0$ such that $\dist(\Vxi,\lambda_n)\not \to 0$ as $n\to \infty$. Condition \eqref{MaxS} implies that $\Vxi$ is a maximum point for $S_\lambda$ on $\ov{\mathcal P}_{k,\eta_0}$, so that
%$$
%S_{\lambda_n} (\Vxi(\lambda_n)) = \max_{\overline{\mathcal P}_{k,\eta_0}} S_\lambda \to  \max_{\overline{\mathcal P}_{k,\eta_0}} \F.
%$$
%Up to a subsequence, we have that $\xi(\lambda_n)\to \bar \Vxi \in \ov{\mathcal P}_{k,\eta_0}.$ Then, the uniform convergence of $S_{\lambda}$ in $\ov{\mathcal P}_{k,\eta}$ implies that $S_{\lambda_n}(\Vxi(\lambda_n))\to \F(\bar \Vxi)$, so that  $\bar \Vxi$ is a maximum point for $\F$ in $\overline{\mathcal P}_{k,\eta_0}$. Since $\mathcal P_{k,\eta_0}$ contains $M_{\F}$, we conclude that $\bar \Vxi\in M(\F)$.  This contradicts the assumption $\dist(\Vxi,M_{\F})\not \to 0$. 
\end{proof}
%
%\begin{proof}
%First, we observe that Proposition \ref{PropLess} gives 
%$$
%\F(\Vxi)\le \frac{k\log 2}{\pi}, 
%$$
%so that $\F$ is bounded from above. Now, for any small $\eps>0$, set $\mathcal P_{k}^\eps:=\{ \Vxi \in \mathcal P_k \; :\; d(\Vxi,\partial \mathcal P_k)<\eps\}$. by Proposition \ref{Step_V}, we can choose $\eps$ small enough s.t. $\displaystyle{\F < \sup_{\mathcal P_{k}} \F -1}$ in $\mathcal P_{k}^\eps$. This implies that 
%\begin{equation}\label{Max}
%\sup_{\mathcal P_{k}} \F = \sup_{\mathcal P_k\setminus \mathcal P_k^\eps} \F. 
%\end{equation}
%Since $\mathcal P_k\setminus \mathcal P_k^\eps$ is a compact set,  $\F$ has a maximum in $\mathcal P_k\setminus \mathcal P_k^\eps$, which  is also a maximum in $\mathcal P_k$ because of \eqref{Max}. 
%\end{proof}

\begin{rem}
By construction, we also have that $\dist(\Vxi(\lambda),M_{\F}) \to 0$. In particular, for any sequence $\lambda_n \to 0 $, we have $\Vxi(\lambda_n) \to \bar \Vxi$ up to extracting a subsequence, where $\bar{\Vxi}$ is a maximum point for $\F$. 
%In the case $k=2$, we can prove that $\F$ has a unique maximum point $\bar \Vxi$ (cf. Appendix), so that $\Vxi(\lambda)\to \bar \Vxi$  as $\lambda\to 0$. 
\end{rem}
}

\section{Proof of the main Theorems}\label{Sec:Proofs}
We now collect the results of the previous sections to complete the proof of our main results. 

\medskip\medskip
\emph{Proof of Theorem \ref{TrmBetter}.} { Let $\w$ be as in \eqref{w}, with $\Vde = (\delta_1,\ldots,\de_k)$ as in \eqref{delta}. For a given $p\in (1,2)$, let $\lambda_0$ and $\ph_{\lambda,\Vxi}$ be as in Proposition \ref{PropPhi} and  let $\F_\lambda$ be as in \eqref{reducedF}. By Corollary \ref{final}, there exists  $\eta_0$ small enough such that $\F_\lambda$  has a critical point in $\mathcal P_{k,\eta_0}$. By Propositions \ref{reduction} and \ref{PropPhi}, setting $\ph_{\lambda}:= \ph_{\lambda,\Vxi(\lambda)}$ and  $\Vdelta(\lambda) := (\delta_1(\lambda,\Vxi(\lambda)),\ldots,\delta_k(\lambda,\Vxi({\lambda)}))$, we get that $u_\lambda:=\omega_{\Va,\Vdelta_\lambda,\Vxi({\lambda})} + \ph_\lambda $
is a solution of \eqref{Eq},  as claimed in Theorem \ref{TrmBetter}. Proposition \ref{PropPhi} also gives $\|\ph_\lambda\|=O(\lambda^\frac{1}{p}|\log \lambda|)\to 0$ as $\lambda \to 0$.  It remains to prove that $\|\ph_\lambda\|_{L^\infty(I)}\to 0$. Let us recall that $\ph_\lambda\in X_0^\frac{1}{2}(I)$ is a weak solution to
$$
(-\Delta)^\frac{1}{2}\ph_{\lambda} = \f'(\w) \ph_\lambda + E + N(\ph_\lambda)
$$
in $I$. Thanks to Lemma \ref{ExpDer}, we have 
$$
\int_{I} \left(\f'(\omega_{\Va, \Vde_\lambda,\Vxi(\lambda )}) |\ph_\lambda|\right)^p dx   = \sum_{i=1}^k \int_{\xi_i-\frac{\eta}{2}}^{\xi_i+\frac{\eta}{2}} O(e^{p U_{\de_i,\xi_i}}) |\ph_\lambda|^p dx  +o(1),
$$
as $\lambda\to 0$. But by H\"older's inequality (with any $q>1$) and \eqref{gen Est}, we get that 
$$\begin{aligned}
\int_{\xi_i-\frac{\eta}{2}}^{\xi_i+\frac{\eta}{2}} \left( e^{ U_{\de_i,\xi_i}} |\ph_\lambda|\right)^p dx  & \le  \left(\int_{\xi_i-\frac{\eta_0}{2}}^{\xi+\frac{\eta_0}{2}} e^{p q U_{\de_i,\xi_i}}dx \right)^\frac{1}{q}\left(\int_{I} |\ph_{\lambda}|^{\frac{pq}{q-1}}d x\right)^\frac{q-1}{q} \\ & = \left( O(\lambda^{1-qp }) \right)^\frac{1}{q} O(\|\ph_\lambda\|^p) \\ &= O(\lambda^{\frac{1}{q}+1 -p}|\log \lambda|^p ).
\end{aligned}$$
Since $p<2$, we can take $q>1$ such that $\frac{1}{q}+1-p>0$, so that $\|\f'(\w)\ph_\lambda\|_{L^p(I)}\to 0$ as $\lambda\to 0$. In addition,  Lemma \ref{Est_E} and Lemma \ref{est_L} give $\|E\|_{L^p(I)} \to 0$ and $\|N(\ph_\lambda)\|_{L^p(I)}\to 0$ as $\lambda\to 0$.  Thus  $(-\Delta)^\frac{1}{2} \ph_\lambda \to 0$ in $L^p(I)$ and elliptic estimates (see \cite[Theorem 13]{Tommaso}) imply $\|\ph_\lambda \|_{L^\infty(I)}\to 0$, as desired. \phantom{ } \hfill$\square$\medskip }

\medskip
We now turn to the proof of Theorem \ref{MainThm}. From now on, we let $u_\lambda$ be the solution constructed in Theorem \ref{TrmBetter}. {Note that $u_\lambda$ has the form  
$$
u_\lambda := \sum_{i=1}^{k}(-1)^{i-1} PU_{\de_i(\lambda),\xi_i(\lambda)} +\ph_\lambda,
$$
with $\Vxi(\lambda)=(\xi_1(\lambda),\ldots,\xi_k(\lambda)) \in \mathcal P_{k,\eta_0}$ for some small $\eta_0$, $ \Vdelta(\lambda)=(\de_1(\lambda),\ldots,\de_{k}(\lambda))$ such that  $\delta_{i}=O(\lambda)$ as $\lambda \to 0$, and $\ph_{\lambda}\in X_0^\frac{1}{2}(I)$ satisfying $\|\ph_\lambda\| +\|\ph_{\lambda}\|_{L^\infty(I)}\to 0$.  Up to extracting a subsequence, we may also assume that 
$$
\xi_{i}(\lambda) \to  \xi_{i}, \quad \text{ with } -1< \xi_1<\ldots< \xi_k <1.
$$
}

\begin{lemma}\label{SomeConvergence}
The following properties hold. 
\begin{itemize}
\item $u_\lambda$ blows-up with alternating sign at $\xi_1,\ldots,\xi_k$ as $\lambda\to 0$, that is \eqref{blow-up} holds for any small $\eps>0$. 
\item $u_\lambda\in C^\infty(I)$ and $u_\lambda \to u_0 :=\sum_{i=1}^k (-1)^{i-1} G_{\xi_1} $ in $C^\infty_{\loc}(I\setminus \{\xi_1,\ldots,\xi_k\})$ as $\lambda \to 0$. 
\item For any $\eps>0$, and $\alpha \in (0,{\frac{1}{2}})$, we have $$\left\| \frac{u_\lambda-u_0}{{\sqrt{d}}}  \right\|_{C^{0,\alpha}((-1,\xi_1-\eps))} + \left\| \frac{u_\lambda-u_0}{{\sqrt{d}}}  \right\|_{C^{0,\alpha}((\xi_{k}+\eps, 1))}   \to 0,$$ as $\lambda\to 0$, where $d(x):=1-|x|$ is the distance of $x$ from $\partial I$. 
\end{itemize}
\end{lemma}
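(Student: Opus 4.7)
The plan is to establish the three bullets in turn, relying on the $L^\infty$ expansions from Lemma \ref{PU}, the concentration estimates of Section \ref{Sec:E}, and classical interior and boundary regularity for $(-\Delta)^\frac{1}{2}$.

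For the first bullet I will evaluate $u_\lambda$ at the concentration point $\xi_i(\lambda)$. Since $U_{\de_i(\lambda),\xi_i(\lambda)}(\xi_i(\lambda))=\log(2/\de_i(\lambda))$, the first case of Lemma \ref{PU} gives
\[
PU_{\de_i(\lambda),\xi_i(\lambda)}(\xi_i(\lambda)) = -2\log\de_i(\lambda) + 2\pi H(\xi_i(\lambda),\xi_i(\lambda)) + O(\lambda^2),
\]
while the second case yields $PU_{\de_j(\lambda),\xi_j(\lambda)}(\xi_i(\lambda)) = 2\pi G_{\xi_j}(\xi_i) + o(1) = O(1)$ for $j\ne i$. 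Combined with $\|\ph_\lambda\|_{L^\infty(I)}\to 0$ and $\de_i(\lambda)\to 0$, this forces $(-1)^{i-1}u_\lambda(\xi_i(\lambda))\to +\infty$. Since $\xi_i(\lambda)\to\xi_i$, the displayed $\sup$ over $(\xi_i-\eps,\xi_i+\eps)$ in \eqref{blow-up} diverges for any fixed $\eps>0$.

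For the second bullet, smoothness $u_\lambda\in C^\infty(I)$ follows from a standard bootstrap: $u_\lambda\in L^\infty(I)$ forces $\f(u_\lambda)\in L^\infty(I)$, and iterated interior regularity for the half-Laplacian raises the regularity. For the $C^\infty_\loc(I\setminus\{\xi_1,\ldots,\xi_k\})$ convergence, fix a compact $K\subset I\setminus\{\xi_1,\ldots,\xi_k\}$. For small $\lambda$, $K$ is separated from each $\xi_i(\lambda)$ by a fixed positive distance, so the second case of Lemma \ref{PU} gives $PU_{\de_i(\lambda),\xi_i(\lambda)} = 2\pi G_{\xi_i(\lambda)} + O(\lambda^2)$ in $L^\infty(K)$. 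Smooth dependence of the explicit formula \eqref{FormulaGI} on the singularity, together with $\xi_i(\lambda)\to\xi_i$, yields $G_{\xi_i(\lambda)}\to G_{\xi_i}$ in $C^\infty(K)$, so $\w\to u_0$ uniformly on $K$; combined with $\|\ph_\lambda\|_{L^\infty(I)}\to 0$, we get $u_\lambda\to u_0$ uniformly on $K$. Since $(-\Delta)^\frac{1}{2}u_0\equiv 0$ on $I\setminus\{\xi_i\}$ and $(-\Delta)^\frac{1}{2}u_\lambda=\lambda(e^{u_\lambda}-e^{-u_\lambda})\to 0$ uniformly on a slightly larger compact $K'\supset K$, iterated interior Schauder estimates for $(-\Delta)^\frac{1}{2}$ upgrade the uniform convergence to $C^\infty(K)$.

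For the third bullet I will use the Green representation. Since both $u_\lambda$ and $u_0$ vanish outside $I$,
\[
(u_\lambda-u_0)(x) = \int_I G(x,y)\,d\nu_\lambda(y),\qquad \nu_\lambda:=\f(u_\lambda)\,dy - 2\pi\sum_{i=1}^k (-1)^{i-1}\de_{\xi_i}.
\]
A Taylor expansion of \eqref{FormulaGI} around $x=-1$ (writing $d=1+x$, $\sqrt{1-x^2}=\sqrt{(2-d)d}$) shows that $\tilde G(x,y):=G(x,y)/\sqrt{d(x)}$ extends to a $C^\infty$ function of $x\in[-1,\xi_1-\eps]$, uniformly for $y$ in any compact subset of $(\xi_1-\eps/2,1)$, with boundary value $\frac{1}{\pi}\sqrt{2(1-y)/(1+y)}$ at $x=-1$. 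Dividing the above identity by $\sqrt{d(x)}$ and splitting the integral into the concentration windows $(\xi_i-\eta_0/2,\xi_i+\eta_0/2)$ and their complement in $I$, I then bound each piece. On the complement, the argument of Lemma \ref{ExpDer} applied to $\f$ (rather than $\f'$) together with $\|\ph_\lambda\|_{L^\infty}\to 0$ yields $\f(u_\lambda)=O(\lambda)$ in $L^\infty$, giving an $o(1)$ contribution in $C^{0,\alpha}_x([-1,\xi_1-\eps])$. On each window, the rescaling $y=\xi_i(\lambda)+\de_i(\lambda) z$ converts the integral into one over an expanding interval, and the pointwise limit $\de_i(\lambda)\f(u_\lambda)(\xi_i(\lambda)+\de_i(\lambda)z)\to 2(-1)^{i-1}/(1+z^2)$ together with $\int_\R 2/(1+z^2)\,dz=2\pi$ and the smoothness of $\tilde G(x,\cdot)$ near $\xi_i$ produces $2\pi(-1)^{i-1}\tilde G(x,\xi_i) + o(1)$ in $C^{0,\alpha}_x$, cancelling exactly the corresponding Dirac contribution in $\nu_\lambda$. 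The case near $x=1$ is symmetric.

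The main obstacle is this last step: upgrading the weak-$*$ statement $\nu_\lambda\to 0$ to convergence in Hölder norm of the Green integral divided by $\sqrt{d}$, uniformly up to the boundary. This requires matching quantitative rates for the displacements $|\xi_i(\lambda)-\xi_i|$, for the shape convergence of $\de_i(\lambda)\f(u_\lambda)(\xi_i(\lambda)+\de_i(\lambda)\cdot)$ to the profile $2/(1+|\cdot|^2)$, and for the $\lambda$-decay of $\f(u_\lambda)$ on the complement of the concentration windows. All three are in principle controlled by the estimates of Sections \ref{Sec:E}--\ref{Sec:Fix} together with Theorem \ref{TrmBetter}, but the bookkeeping is delicate because the Hölder norm tests for regularity in $x$ jointly with smallness of the measure $\nu_\lambda$.
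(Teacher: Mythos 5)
The first two bullets of your proposal are fine and essentially coincide with the paper's argument (evaluation at the concentration points via Lemma \ref{PU} plus $\|\ph_\lambda\|_{L^\infty(I)}\to 0$ for the blow-up; local uniform bounds and interior elliptic estimates for the $C^\infty_{\loc}$ convergence). The problem is the third bullet, which is precisely where the content of the lemma lies. Your Green-representation strategy is plausible in principle, but as written it has a genuine gap: the step you yourself flag at the end --- upgrading the approximate-delta structure of $\f(u_\lambda)$ to convergence of $\int_I G(x,y)\,d\nu_\lambda(y)/\sqrt{d(x)}$ in $C^{0,\alpha}$ up to the boundary --- is not carried out, and it is not routine bookkeeping in the form you sketch. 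To close it you would have to decompose $\f(u_\lambda)=\sum_i a_i e^{U_{\de_i,\xi_i}}+E+\f'(\w)\ph_\lambda+N(\ph_\lambda)$ and marshal the quantitative estimates that make the cancellation with $2\pi\sum_i(-1)^{i-1}\delta_{\xi_i}$ effective: $\|E\|_{L^1(I)}=O(\lambda|\log\lambda|)$, the $L^p$-smallness of $\f'(\w)\ph_\lambda$ and $N(\ph_\lambda)$ (as in the proof of Theorem \ref{TrmBetter}), the first-moment bound $\int e^{U_{\de_i,\xi_i}}|y-\xi_i|\,dy=O(\lambda|\log\lambda|)$ from \eqref{gen Est}, and control of $|\xi_i(\lambda)-\xi_i|$. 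None of these appear in your argument. In addition, your claim that $G(x,y)/\sqrt{d(x)}$ extends to a $C^\infty$ function of $x$ at $x=-1$ is false: writing $d=1+x$, the kernel expands in powers of $\sqrt d$, so the ratio is only $C^{0,\frac12}$ in $x$ up to the boundary (uniformly for $y$ away from $x$). This is still enough for the target exponents $\alpha<\frac12$, but the statement as written is incorrect and would mislead you if you tried to extract regularity beyond H\"older-$\frac12$ from it.

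The paper reaches the third bullet by a shorter route that avoids all of this. One multiplies $u_\lambda$ by a cutoff $\psi$ equal to $1$ on $[\xi_k+\eps,\infty)$ and $0$ on $(-\infty,\xi_k+\frac{\eps}{2})$, computes $(-\Delta)^{\frac12}(u_\lambda\psi)$ by the product rule for the half-Laplacian (the extra bilinear integral is controlled using the $C^{0,\frac12}(\R)$ bound on $u_\lambda$, its $L^1(\R)$ norm, and the local uniform bounds from the second bullet), and checks that this right-hand side is bounded in $L^\infty((\xi_k+\frac{\eps}{2},1))$ uniformly in $\lambda$. The boundary regularity theorem of Ros-Oton and Serra \cite{RosSer} then gives a $\lambda$-uniform bound on $\|u_\lambda/\sqrt d\|_{C^{0,\beta}((\xi_k+\eps,1))}$ for every $\beta<\frac12$, and convergence of $(u_\lambda-u_0)/\sqrt d$ in $C^{0,\alpha}$, $\alpha<\beta$, follows by compactness together with the interior convergence already established; the interval $(-1,\xi_1-\eps)$ is symmetric. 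If you wish to keep your Green-function approach you must supply the quantitative estimates listed above; otherwise the localization-plus-boundary-regularity argument is the safer path.
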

\begin{proof}
In order to get the first property, it is sufficient to observe that Lemma \ref{PU} implies $PU_{\delta_i(\lambda),\xi_i(\lambda)}(\xi_i(\lambda))\to +\infty$ and $PU_{\de_j(\lambda),\xi_j(\lambda)}{(\xi_i(\lambda))} \to G_{\xi_j(\xi_i)}(\xi_i)$, for $j\neq i$. Since $\|\ph_\lambda\|_{L^\infty(I)}\to 0$ as $\lambda \to 0$, this gives the conclusion. 

Similarly, the second property follows by the boundedness of $u_\lambda$ in $L^\infty_{loc}(I\setminus \{\xi_1,\ldots,\xi_k\})$ and elliptic estimates for $(-\Delta)^\frac{1}{2}$ (see e.g. \cite{RosSer}).  

It remains to prove the third property.   {We focus first on the case $x\in[\xi_k+\epsilon,1)$ and we let $\psi$ be a smooth cut-off function such that $\psi\equiv 0$ on $(-\infty,\xi_{k}+\frac{\eps}{2})$  and  $\psi \equiv 1$ on $[\xi_k+\eps,\infty)$}. By construction, we have $u_\lambda \psi \equiv 0$ in $\R \setminus (\xi_k+\frac{\eps}{2},1)$. Moreover, for  $x\in (\xi_k+\frac{\eps}{2},1)$ we have
\begin{equation}\label{LapProduct}
(-\Delta)^\frac{1}{2} (u_\lambda \psi) = \psi (-\Delta)^\frac{1}{2} u_\lambda + u_\lambda (-\Delta)^{\frac{1}{2}}\psi  +  \int_{\R} \frac{(u_\lambda(x) - u_\lambda(y))(\psi(x)-\psi(y))}{|x-y|^2}dy.
\end{equation}

Note that the last integral is well defined since $  u_\lambda\in C^{0,\frac{1}{2}} (\R)$ and $\psi\in C^\infty(\R)$.  {Moreover  $\psi\in C^\infty(\R)\cap L^\infty(\R)$ implies  $(-\Delta)^\frac{1}{2}\psi\in C^\infty(\R)$ (see e.g. \cite[Proposition 2.1.4]{Silv}). Then, since} $u_\lambda$ is uniformly bounded in $L^\infty_{loc}(\R \setminus \{\xi_1,\ldots,\xi_k\})$, and $u_\lambda$ solves \eqref{Eq}, we have that 
$$
\|\psi (-\Delta)^\frac{1}{2} u_\lambda\|_{L^\infty({ (\xi_k+\frac{\eps}{2},1)})} +\|u_\lambda (-\Delta)^\frac{1}{2} \psi\|_{L^\infty({ (\xi_k+\frac{\eps}{2},1)})} \le C,
$$ 
for some $C>0$, depending only on $\eps$. Now, if $x\in [\xi_k+2\eps,1)$, then 
$$
\begin{aligned}
\left|\int_{\R} \frac{(u_\lambda(x) - u_\lambda(y))(\psi(x)-\psi(y))}{|x-y|^2}dy\right| & = \left|\int_{-\infty}^{\xi_k+\eps} \frac{(u_\lambda(x) - u_\lambda(y))(1-\psi(y))}{|x-y|^2}dy \right|\\
& \le \frac{1}{\eps^2} \left(\|{ u_\lambda}\|_{L^\infty(\xi_k+2\eps,1)}+\|{  u_\lambda}\|_{L^1(\R)}\right)\le C.
\end{aligned}
$$
If instead $x\in (\xi_k+\frac{\eps}{2},\xi_k+2\eps)$, then 
$$
\begin{aligned}
\int_{\R} \frac{(u_\lambda(x) - u_\lambda(y))(\psi(x)-\psi(y))}{|x-y|^2}dy &  = \int_{x-\frac{\eps}{3}}^{x+\frac{\eps}{3}} \frac{(u_\lambda(x) - u_\lambda(y))(\psi(x)-\psi(y))}{|x-y|^2}dy \\
& \quad + O\left(\|\psi\|_{L^\infty(\R)} (\|{ u_\lambda}\|_{L^1(\R)} +  \|{  u_\lambda}\|_{L^\infty((\xi_k+\frac{\eps}{2},1) )})\right),
\end{aligned}
$$
with 
$$
\left|\int_{x-\frac{\eps}{3}}^{x+\frac{\eps}{3}} \frac{(u_\lambda(x) - u_\lambda(y))(\psi(x)-\psi(y))}{|x-y|^2}dy \right|\le \frac{2\eps}{3} \|u_\lambda'\|_{L^\infty ({ \xi_k+\frac{\eps}{6},\xi_k+\frac{7}{3}\eps)}} \|\psi'\|_{L^\infty(\R)}.
$$

We have so proved that the RHS of \eqref{LapProduct} is bounded in $L^\infty((\xi_k+\frac{\eps}{2},1))$. Then, the regularity results of Ros-Oton and Serra { (Theorem $1.2$ in \cite{RosSer})} implies that $\|\frac{u_\lambda}{\sqrt{d}}\|_{C^{0,\beta}((\xi_k+\eps,1))}\le \|\frac{u_\lambda \psi}{\sqrt{d}} \|_{C^{0,\beta}((\xi_k+\frac{\eps}{2},1))}\le C$ for any $\beta<\frac{1}{2}$. In particular, we have that $\|\frac{u_\lambda-u_0}{\sqrt{d}}\|_{C^{0,\alpha}}\to 0$, for any $\alpha<\beta {<\frac{1}{2}}$. With similar arguments, we prove an analogue convergence result in $(-1,\xi_1-\eps)$. 

\end{proof}

%Throughout this section we shall denote $\xi_0 = -1$ and $\xi_{k+1}=1$. 
The main  step in the proof on Theorem \ref{MainThm} consists in showing that the limit profile $u_0$  has exactly $k-1$ zeros in $I \setminus (\xi_1,\ldots,\xi_k)$. In fact, we shall prove that $u_0$ is strictly monotone in each of the intervals $(\xi_i,\xi_{i+1})$. In the following it is useful to denote  $\xi_0 = -1$ and $\xi_{k+1}=1$. 

\begin{prop}\label{monotone}
For any given  $k\in \N$  and $\Vxi= (\xi_1,\ldots,\xi_k)$ with $-1=\xi_0<\xi_1<\ldots< \xi_k<\xi_{k+1}=1$, consider the function 
$$
u_0=\sum_{i=1}^k (-1)^{i-1} G_{\xi_i}.
$$
Then, there  exists a constant $c= c(k,\Vxi)$ such that, for any $ j=0,\ldots,k$, we have
$$
(-1)^{j}u_0'(x) \sqrt{1-x^2}\ge c> 0 \quad \text{ for } x\in (\xi_j,\xi_{j+1}).
$$
\end{prop}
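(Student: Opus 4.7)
\emph{Proof plan.}
By Lemma~\ref{PropGr}\ref{StepIV} one has the explicit formula
\[
u_0'(x)\sqrt{1-x^2}=-\frac{1}{\pi}\sum_{i=1}^k \frac{(-1)^{i-1}\sqrt{1-\xi_i^2}}{x-\xi_i}.
\]
My plan is to pass to angular variables $x=\cos\phi$, $\xi_i=\cos\theta_i$ with $\phi\in(0,\pi)$ and $\pi>\theta_1>\cdots>\theta_k>0$, and then to $\alpha:=\cot\frac{\phi}{2}$, $\beta_i:=\cot\frac{\theta_i}{2}$. Using $\cos\phi-\cos\theta_i=-2\sin\frac{\phi+\theta_i}{2}\sin\frac{\phi-\theta_i}{2}$, the identity $\sin\frac{\phi\pm\theta_i}{2}=\sin\frac{\phi}{2}\sin\frac{\theta_i}{2}(\beta_i\pm\alpha)$, and $\sin^{-2}\frac{\phi}{2}=1+\alpha^2$, a short calculation yields
\[
u_0'(x)\sqrt{1-x^2}=-\frac{1+\alpha^2}{\pi}\,\hat S(\alpha^2),\qquad \hat S(u):=\sum_{i=1}^k \frac{(-1)^{i-1}\beta_i}{u-\beta_i^2}.
\]
Since $0<\beta_1<\cdots<\beta_k$, and since the correspondence $x\in(\xi_j,\xi_{j+1})\leftrightarrow \alpha^2\in(\beta_j^2,\beta_{j+1}^2)$ (with the convention $\beta_0:=0$, $\beta_{k+1}:=+\infty$) holds, the proposition reduces to proving that $(-1)^{j-1}\hat S(u)>0$ on $(\beta_j^2,\beta_{j+1}^2)$ for every $j=0,\dots,k$, with a uniform positive lower bound depending only on $k$ and $\Vxi$.

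My plan for this sign property is an elementary pairing argument. For $u\in(\beta_j^2,\beta_{j+1}^2)$ one has $u-\beta_i^2>0$ when $i\le j$ and $u-\beta_i^2<0$ when $i\ge j+1$; multiplying $\hat S$ by $(-1)^{j-1}$, a direct sign check shows that both the $i=j$ and $i=j+1$ terms are positive and that the signs alternate as $|i-j|$ grows. I would group $\{1,\dots,j\}$ into consecutive pairs starting from $i=j$ downward, and $\{j+1,\dots,k\}$ into consecutive pairs starting from $i=j+1$ upward. Each upper pair $(i_1,i_2)$ with $i_1>i_2$ yields a contribution $g_+(\beta_{i_1})-g_+(\beta_{i_2})>0$, where $g_+(\beta):=\beta/(u-\beta^2)$ is strictly increasing on $[0,\sqrt{u})$ since $g_+'(\beta)=(u+\beta^2)/(u-\beta^2)^2>0$. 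Each lower pair $(i_1,i_2)$ with $i_1<i_2$ yields $g_-(\beta_{i_1})-g_-(\beta_{i_2})>0$, where $g_-(\beta):=\beta/(\beta^2-u)$ is strictly decreasing on $(\sqrt{u},+\infty)$ because $u\ge 0$ gives $g_-'(\beta)=-(\beta^2+u)/(\beta^2-u)^2<0$. A leftover singleton occurs only when $j$ or $k-j$ is odd (at $i=1$ or $i=k$ respectively), and in both exceptional cases a direct sign check shows its contribution is also strictly positive. Hence $(-1)^{j-1}\hat S(u)$ is a finite sum of strictly positive quantities.

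The uniform lower bound finally follows from continuity: on each open interval $(\beta_j^2,\beta_{j+1}^2)$ the function $(-1)^{j-1}\hat S$ is continuous, strictly positive, and tends to $+\infty$ as $u$ approaches any interior endpoint $\beta_j^2$ or $\beta_{j+1}^2$ (i.e.\ as $x\to\xi_j^+$ or $\xi_{j+1}^-$). At the outer endpoints relevant only for $j=0$ or $j=k$ (i.e.\ $x=\pm 1$), the original expression reduces to an alternating sum of the strictly monotone positive sequences $\sqrt{(1\mp\xi_i)/(1\pm\xi_i)}$, which is manifestly nonzero and of the correct sign. Therefore the infimum of $(-1)^j u_0'(x)\sqrt{1-x^2}$ on the closure of each $(\xi_j,\xi_{j+1})$ is a strictly positive constant $c=c(k,\Vxi)$, as required. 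The only genuinely delicate step is the pairing monotonicity for $g_\pm$; everything else is routine algebraic manipulation, sign-tracking and continuity.
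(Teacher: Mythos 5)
Your proposal is correct, and it reaches the conclusion by a genuinely different route than the paper. Your cotangent--half-angle substitution is sound: starting from \ref{StepIV} of Lemma \ref{PropGr}, the identities you quote do give $u_0'(x)\sqrt{1-x^2}=-\frac{1+\alpha^2}{\pi}\hat S(\alpha^2)$ with $\hat S(u)=\sum_i(-1)^{i-1}\beta_i/(u-\beta_i^2)$, the interval correspondence is right, the monotonicity of $g_\pm$ and the $j$-centred pairing (with the leftover singletons at $i=1$ or $i=k$ having the claimed positive sign) do yield $(-1)^{j-1}\hat S>0$ on $(\beta_j^2,\beta_{j+1}^2)$, and your endpoint analysis is the one delicate point you need and correctly handle: for $j=k$ (and $j=0$) the factor $1+\alpha^2$ must be retained, since $\hat S$ alone tends to $0$ as $u\to+\infty$, and the limit of the full expression at $x=\pm1$ is the alternating sum of $\beta_i$ (resp. $1/\beta_i$), which is positive; uniformity then follows by continuity and compactness, which is admissible because $c$ is allowed to depend on $k$ and $\Vxi$. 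The paper instead works directly in the $x$ variable: it first bounds $G_{\xi_i}'\sqrt{1-x^2}$ away from zero on each side of $\xi_i$, then shows via the mean value theorem applied to $t\mapsto\sqrt{1-t^2}/(x-t)$ that each consecutive difference $g_i=G_{\xi_i}-G_{\xi_{i+1}}$ satisfies uniform one-sided bounds, and finally writes $u_0$ as a sum of such differences plus at most two single Green functions, the decomposition being chosen according to the parity of $k$ and of $j$. Both arguments are at heart "pair consecutive peaks and use monotonicity", but the paper's pairing is fixed and produces explicit constants $c_1,c_2$ with no compactness step, while yours radiates outward from the interval index $j$, trades the Green-function estimates for the cleaner monotonicity of the rational functions $\beta/(u\mp\beta^2)$, and recovers uniformity a posteriori. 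One cosmetic remark: your labels "upper pair" and "lower pair" appear interchanged relative to the index sets $\{1,\dots,j\}$ and $\{j+1,\dots,k\}$, but since the functions $g_+$ and $g_-$ identify the groups unambiguously, this does not affect the argument.
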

\begin{proof}
Throughout the proof we denote $\alpha(x):=\sqrt{1-x^2}$, $x\in I$. 

\medskip
{\emph{Step 1}:} There exists $c_1= c_1(\xi_1,\ldots,\xi_k)$ such that $G_{\xi_i}'\alpha  \ge c_1$ in $(-1,\xi_i)$ and $G_{\xi_i}'\alpha \le -c_1$ in $(\xi_i,1)$. 

\medskip 
Fix $i\in\{1,\ldots,k\}$. According to \ref{StepIV} of Lemma \ref{PropGr}, we have that
$$
G_{\xi_i}'(x)= - \frac{1}{\pi} \frac{\sqrt{1-\xi_i^2}}{(x-\xi_i) \alpha(x)}.
$$
In particular 
$$
|G'_{\xi_i}(x)|\alpha(x) =  \frac{\sqrt{1-\xi_i^2}}{\pi |x-\xi_i|} \ge  \frac{\sqrt{1-\xi_i^2}}{2\pi} \ge { \frac{\displaystyle{\sqrt{1-\max_{1\le j\le k}|\xi_j|^2}}}{2\pi}},
$$
where we used $|x-\xi_i|\le 2$. Since $G_{\xi_i}'>0$ in $(\xi_0,\xi_i)$ and $G_{\xi_i}'<0$ in $(\xi_i,\xi_{k+1})$, the inequality above gives the conclusion. 

\medskip

{\emph{Step 2}}: Assume $k\ge 2$ and for any $1\le i\le k-1$ set $g_i:= G_{\xi_i}-G_{\xi_{i+1}}$. There exists a constant $c_2>0$ such that $g_i'\alpha \ge c_2$ in $(\xi_0,\xi_{i})\cup (\xi_{i+1},\xi_{k+1})$ and $g_i'\alpha \le -c_2$ in $(\xi_i,\xi_{i+1})$,  for any  $1\le i\le k$. 

\medskip
By Step 1, we know that $G_{\xi_i}'\alpha \le -c_1$ and $G_{\xi_{i+1}}'\alpha \ge c_1$ in  $(\xi_i,\xi_{i+1})$. This immediately gives $g_i' \alpha \le -2c_1$ in $(\xi_i,\xi_{i+1})$.   Let us now assume $x<\xi_i$ or $x>\xi_{i+1}$. As in Step 1, we have the explicit expression 
$$
\begin{aligned}
g_i'(x)\ge G_{\xi_i}'(x)-G_{\xi_{i+1}}'(x) & = \frac{1}{\pi} \frac{\sqrt{1-\xi_{i+1}^2}}{(x-\xi_{i+1}) \alpha(x)} - \frac{1}{\pi} \frac{\sqrt{1-\xi_i^2}}{(x-\xi_i) \alpha(x)}\\
& =  \frac{f_x(\xi_{i+1})-f_x(\xi_i)}{\pi \alpha(x)}, 
\end{aligned}
$$
where $f_x(t):= \frac{\sqrt{1-t^2}}{x-t}$. If $x<\xi_i$ or $x>\xi_{i+1}$, using that $f_x\in C^1((-1,1)\setminus \{x\})$, we get that 
$$
f_x(\xi_{i+1})-f_x(\xi_i) = f'_x(\bar \xi)(\xi_{i+1}-\xi_{i}).
$$
where $\bar \xi$ is a point between $\xi_i$ and $\xi_{i+1}$. In particular 
$$
f'_x(\bar \xi) = \frac{1-x \bar \xi}{(x-\bar \xi)^2\sqrt{1-\bar \xi^2}} \ge \frac{1-|\bar \xi|}{(x-\bar \xi)^2\sqrt{1- \bar \xi^2}}  \ge \frac{1 - \max\{|\xi_i|,|\xi_{i+1}|\}}{(x-\bar \xi)^2}\ge {\frac{1-M(\Vxi)}{4}},
$$
{where $M(\Vxi):= \displaystyle{\max_{1\le j\le k} |\xi_j|\in (0,1)}$.}  We can so conclude that 
$$
g_i'(x)\alpha(x ) \ge {\frac{(1-M(\Vxi))(\xi_{i+1}-\xi_i)}{4 \pi} \ge \frac{(1 -M(\Vxi))\sigma(\Vxi)}{4 \pi}},
$$
{where $\sigma(\Vxi):=\min_{1\le j\le k} \xi_{j+1} - \xi_{j} >0$}. The RHS is a constant depending only on $k$ and $\Vxi$. 
\medskip

{\emph{Step 3}}: Conclusion of the proof. 

\medskip
If $k=1$ or $k=2$, the conclusion follows directly from Step 1 and 2. 

Assume $k\ge 3$, $k$ odd. For $1\le i\le k-1$ let $\gi$ be as in Step 2. 
%
% As a consequence of Steps 1 and 2, we can find $c$ depending on $\xi_1,\ldots,\xi_k$ such that  
%\begin{equation}\label{DerC}\begin{aligned}
%{g_i}'\ge c  &\;\text{ in } (-1,\xi_i) \cup (\xi_{i+1},1),\;  1\le i\le k-1\\
%G_{\xi_i} \ge c & \;\text{ in } (-1,\xi_i),  \;1\le i\le k \\
%G_{\xi_i} \le -c &\; \text{ in } (\xi_i,1),\;  1\le i\le k
%\end{aligned}
%\end{equation}
We can write 
\begin{equation}{\label{Sum1}}
u_0 = \sum_{i=1, \, i\, odd}^{k-2} g_i + G_{\xi_{k}} 
\end{equation}
and 
\begin{equation}\label{Sum2}
u_0 = G_{\xi_1} - \sum_{i=2,i \, even}^{k-1} g_i. 
\end{equation}
Note that if $1\le i\le k-2$ is odd, and if $0\le j\le k-1$ is even, the interval $(\xi_{j},\xi_{j+1})$ is contained in  $(-1,\xi_i) \cup (\xi_{i+1},1)$ and in $(-1,\xi_k)$. In particular, Steps 1 and 2 guarantee the product of $\alpha$ with any of the functions appearing in \eqref{Sum1} is increasing in $(\xi_j,\xi_{j+1})$. In fact,  we get
$$
u_0' \,\alpha \ge  \frac{k-1}{2} c_2+c_1, \quad \text{ for any } j \text{ even}.
$$
Similarly, when $2F\le i\le k-1$  is even and $1\le j\le k$ is odd, then $(\xi_j,\xi_{j+1})$ is contained in $(-1,\xi_i)\cup (\xi_{i+1},1)$ and in $(\xi_1,1)$. Therefore, \eqref{Sum2} together with Steps 1 and 2 yields 
$$
-u_0'\,\alpha\ge c_1+ \frac{k-1}{2}c_2, \quad \text{ in } (\xi_j,\xi_{j+1}), \quad j \text{ odd}.  
$$

Finally, assume $k$  even and $k\ge 4$.  Then, we can decompose
\begin{equation}\label{Sum3}
u_0 = \sum_{i=1, i \,odd}^{k-1} g_i 
\end{equation}
and 
\begin{equation}\label{Sum4}
u_0= G_{\xi_1} - \sum_{i=2,i\, even}^{k-2} g_i -G_{\xi_k}.
\end{equation}
As before, for $0\le j\le k$ even, we have $(\xi_j,\xi_{j+1})\subseteq (-1,\xi_i)\cup (\xi_{i+1},1)$ and $g_i'\alpha \ge c_2$ in $(\xi_j,\xi_{j+1})$ (by Step 2), for any odd $1\le i\le k-1$. Then \eqref{Sum3} yields 
$$
u_0'\,\alpha \ge  \frac{k}{2} c_2, \quad \text{ in } (\xi_j,\xi_{j+1})\text{ for any } j \text{ even}.
$$ 

If instead $j$ is odd, one has $(\xi_j,\xi_{j+1})\subseteq (-1,\xi_i)\cup (\xi_{i+1},1)$ and $g_i'\,\alpha \ge c$ in $(\xi_j,\xi_{j+1})$, for any $2\le i\le {k-2}$ even. Moreover, since $(\xi_j,\xi_{j+1})\subseteq (\xi_1,1)\cap (-1,\xi_k)$, we also get $G_{\xi_1}' \alpha \le -c_1 $ and $G_{\xi_k}'\alpha \ge c_1$. Then, thanks to \eqref{Sum4} we find that 
$$
-u_0'\, \alpha \ge  2c_1+ \frac{k-2}{2} c_2, \quad \text{ in } (\xi_j,\xi_{j+1})\text{ for any } j \text{ odd }.
$$
\end{proof}
%\begin{rem}
%For any $x\in I$, we have the inequalities 
%\begin{equation}\label{RmkDist}
%d(x,\partial I)^\frac{1}{2}\le \sqrt{1-x^2} \le \sqrt{2} d(x,\partial I)^\frac{1}{2}.
%\end{equation}

%In particular, up to changing the value of $c$, we may rewrite the {\color{blue}above} proposition as 
%$$
%(-1)^{j}u_0'(x) d(x,\partial I)^\frac{1}{2} \ge c \quad \text{ in }( \xi_j,\xi_{j+1}) 
%$$
%\end{rem}

{ We can now complete the proof of Theorem \ref{MainThm}.}
\medskip

\medskip
\emph{Proof of Theorem \ref{MainThm}.}
Let $u_\lambda$ be the solution constructed in Theorem \ref{TrmBetter}. {In view of Lemma \ref{SomeConvergence}, we only need to} prove that, for $\lambda$ small enough, $u_\lambda$ has exactly $k$ nodal regions in $I$ or, equivalently, exactly $k-1$ zeroes in $I$. Let us fix $\eps>0$ small enough so that
\begin{equation}\label{sign}
(-1)^{i-i} u_0(\xi_i+\eps) >0, \quad (-1)^{i-i} u_0(\xi_i-\eps) >0 \quad \text{ and } \quad (\xi_i-2\eps,\xi_i+2\eps)\subseteq I \setminus \bigcup_{j\neq i} (\xi_j-\eps,\xi_j+\eps), 
\end{equation}
for $i=1,\ldots,k$. Let us split $I:= I_\eps^1 \cup I_\eps^2 \cup I_\eps^3$, where 
$$
I_\eps^1:= (-1,\xi_1-\eps]\cup [\xi_k+\eps ,1), \quad  I_\eps^2 := \bigcup_{i=1}^k (\xi_i-\eps,\xi_i+\eps), \quad I_\eps^3 = I \setminus (I_{\eps}^1 \cup I_\eps^2).
$$
First, we observe that $u_\lambda$ has no zeroes in $I_{\eps}^1$.
Using Proposition \ref{monotone}, in  $[\xi_k+\eps,1)$, we can write 
$$(-1)^{k-1}u_0(x) = \int_{x}^1 (-1)^k u_0'(t)dt \ge \int_{x}^1 \frac{c}{\sqrt{1-t^2}} dt \ge \frac{c}{\sqrt{2}} \int_x^1 \frac{1}{\sqrt{1-t}} dt  = c\sqrt{2(1-x)}  = \sqrt{2d(x)},$$
where {$d(x)=1-|x|=\dist(x,\partial I)$.} Similarly, for $x\in (-1,\xi_1-\eps]$, we can write 
$$u_0(x) = \int_{-1}^x u_0'(t)dt \ge \int_{x}^1 \frac{c}{\sqrt{1-t^2}} dt \ge \frac{c}{\sqrt{2}} \int_x^1 \frac{1}{\sqrt{1+t}} dt  = c\sqrt{2(1+x)}  = \sqrt{2d(x)}.$$
Thanks to Lemma \ref{SomeConvergence}, we get
$
|u_\lambda(x)| \ge  {\sqrt{d(x)}}
$
in $I_\eps^{1}$, provided $\lambda$ is sufficiently small. This shows that $u_\lambda$ has no zeroes in $I_{\eps}^1$. 
\medskip

Next, we observe that $u_\lambda$ has no zeros in $I_\eps^2$. {Let us fix $1\le i\le k$.} Lemma \ref{PU} gives that 
$$
\begin{aligned}
u_\lambda & = (-1)^{i-1}PU_{\de_i(\lambda),\xi_i(\lambda)} +  2\pi \sum_{j\neq i} (-1)^{j-1}G(\xi_j(\lambda),\xi_i) +O(|\cdot-\xi_i|) \\
& = (-1)^{i-1}\log \left(\frac{1}{\delta_i(\lambda)^2+|x-\xi_i(\lambda)|^2}\right) + 2\pi (-1)^{i-1}H(\xi_i,\xi_i)+  2\pi \sum_{j\neq i} (-1)^{j-1} G(\xi_j,\xi_i) +O(\eps)
\end{aligned}
$$
in $(\xi_i-\eps,\xi_i+\eps)$, if $\lambda$ is small enough. Moreover, we may assume that $|\xi_i(\lambda)-\xi_i|\le \eps$ and $\delta_i(\lambda)\le \eps$. In particular, we have that $|x-\xi_i(\lambda)|^2 +{\delta_i(\lambda)}^2\le 5\eps^2$  for any $x\in (\xi_i-\eps,\xi_i+\eps)$. Then, we get 
$$
|u_\lambda| \ge \log\frac{1}{5\eps^2} - O(1). 
$$
Thus, we have $|u_\lambda|\ge 1$  in $(\xi_i-\eps,\xi_i+\eps)$ if $\eps$ is fixed small enough. 

\medskip
Finally, let us consider the interval $I^3_\eps$. Note that $I^{3}_\eps$ has exactly $k-1$ connected components, namely we have 
$$
I^{3}_\eps = \bigsqcup_{i=1}^{k-1} J_{i,\eps}, \quad \text{ where } \quad J_{i,\eps}=[\xi_i+\eps,\xi_{i+1}-\eps].  
$$  
By \eqref{sign} and Proposition  \ref{monotone}, we know that for any $1\le i\le k-1$, if $\epsilon$ is small enough, we have 
$$
(-1)^{i} u_0(\xi_i+\eps) <0, \quad (-1)^{i} u_0(\xi_{i+1}-\eps) >0, \quad \text{ and } \quad (-1)^{i} u_0' \ge c \text{ in } J_{i,\eps}.  
$$
Since $u_\lambda \to u_0$ in $C^1(\bar I_{\eps}^3)$ by Lemma \ref{SomeConvergence}, this implies that 
$$
(-1)^{i} u_\lambda(\xi_i+\eps) <0, \quad (-1)^{i} u_\lambda(\xi_{i+1}-\eps) >0, \quad \text{ and } \quad (-1)^{i} u_\lambda' \ge c \text{ in } J_{i,\eps}.  
$$
Then $u_\lambda$ has exactly one zero in $J_{i,\eps}$ for any $1\le i\le k-1$. We can so conclude that $u_\lambda$ has exactly $k-1$ zeroes in $I^3_\eps$ (and thus in $I$), as claimed. 
\phantom{ } \hfill$\square$\medskip

\section*{Appendix: Some special cases}
In the proof of Theorem \ref{TrmBetter}, we had to assume that the coefficients $a_1,\ldots,a_k\in\{-1,1\}$ appearing in front of the bubbles $PU_{\de_i,\xi}$  in the expression of the approximate solution  $\w$ are sign-alternating i.e. $a_i = -a_{i+1}$ for $1\le i\le k-1$.  This condition has been used in order to ensure the existence of a maximum point for the functional 
$$
\F(\Vxi) = \sum_{i=1}^k H(\xi_i,\xi_i) + \sum_{i\neq j} a_i a_j G(\xi_i,\xi_j),
$$
in the set $\mathcal P_k$ defined in \eqref{P_k}, as well as the validity of Proposition  \ref{Step_V}.  It is simple to see that this strategy cannot be used for different choices of the $a_i's$.  In fact, if there exists $i\in\{1,\ldots,k\}$, such that $a_i = a_{i+1}$ then $\F$ is not bounded from above.

However, it is interesting to investigate whether one can find different kinds of critical points. Indeed, since it is possible to show that the convergence in Lemma \ref{Lemma conv} holds in the $C^1$-sense, we can construct solutions to \eqref{Eq} whenever we can find a $C^1$-stable critical point for $\F$.  A complete answer to this question can be given for $k=1$ or $k=2$, since one can explicitly find all the critical points of $\F$. In fact, we have the following:

\begin{itemize}
\item In the case $k=1$, we have
$$\F(\xi_1)=H(\xi_1,\xi_1)=\frac{1}{\pi}\log 2(1-\xi_1^2).$$ 
Then $\F$ does not depend on the choice of $a_1$ and has only one critical point at $\xi_1=0$ (a non-degenerate maximum point).

\item In the case $k=2$, we should find critical points of 
\begin{equation*}
\begin{split}
\F(\xi_1,\xi_2)&=  H(\xi_1,\xi_1)+ H(\xi_2,\xi_2)+2a_1 a_2 G(\xi_1,\xi_2)\\ &= \frac{1}{\pi}\log(1-\xi_1^2)(1-\xi_2^2) +\frac{2a_1a_2}{\pi}\log\frac{1-\xi_1\xi_2+\sqrt{(1-\xi_1^2)(1-\xi_2^2)}}{ |\xi_1-\xi_2|}.
\end{split}
\end{equation*}
This leads to two possible configurations:
\begin{itemize}
\item If we choose $a_1=-a_2$, we can easily see that $\F$ has only one critical point in $\mathcal P_2$, located at $(\xi_1,\xi_2)=(-\frac{1}{\sqrt{3}},\frac{1}{\sqrt{3}})$. This point is a non-degenerate global maximum. 
\item If we choose $a_1=a_2$,  we can easily see $\F$ has no critical points in $\mathcal P_2$.
\end{itemize}
\end{itemize}

We conjecture that for $k\ge 3$, the function $\F$ has a unique critical point (the global maximum) if the $a_i$'s have alternating sign, and has no critical point otherwise.

\bibliographystyle{abbrv}

\end{document}